\numberwithin{equation}{section}
\newtheorem{theorem}{Theorem}[section]
\newtheorem{proposition}{Proposition}[section]
\newtheorem{lemma}{Lemma}[section]
\newtheorem{remark}{Remark}[section]
\newtheorem{corollary}{Corollary}[section]
\newtheorem{definition}{Definition}[section]
\def\n{n}
\begin{document}
\author[D. Tang]{De Tang}\address{De Tang\newline\indent School of Mathematics (Zhuhai) \newline\indent Sun Yat-sen University\newline\indent Zhuhai,
519082, Guangdong,P.R. China}
\email{tangd7@mail.sysu.edu.cn}

\author[Z.-A. Wang]{Zhi-an Wang\textsuperscript{$ \ast $}}\thanks{$^{\ast}$Corresponding author.}\address{Zhi-an Wang\newline\indent Department of Applied Mathematics\newline\indent Hong Kong Polytechnic University\newline\indent Hung Hom,
Kowloon, Hong Kong,  P. R. China}\email{mawza@polyu.edu.hk}

\title[Predator-prey model with density-dependent dispersal]{\bf Coexistence of heterogenous predator-prey systems with density-dependent dispersal}

\begin{abstract}
This paper is concerned with existence, non-existence and uniqueness of positive (coexistence) steady states to a predator-prey system with density-dependent dispersal. To overcome the analytical obstacle caused by the cross-diffusion structure embedded in the density-dependent dispersal, we use a variable transformation to convert the problem into an elliptic system without cross-diffusion structure. The transformed system and pre-transformed system are equivalent in terms of the existence or non-existence of positive solutions. Then we employ the index theory alongside the method of the principle eigenvalue to give a nearly complete classification for the existence and non-existence of positive solutions. Furthermore we show the uniqueness of positive solutions and characterize the asymptotic profile of solutions for small or large diffusion rates of species. Our results pinpoint the positive role of density-dependent dispersal on the population dynamics for the first time by showing that the density-dependent dispersal is a beneficial strategy promoting the coexistence of species in the predator-prey system by increasing the chance of predator's survival.

\end{abstract}
\subjclass[2020]{35B09, 25J25, 35J60, 35K57, 35Q92}

\keywords{Predator-prey system, density-dependent dispersal, coexistence, principle eigenvalue, index theory}

\maketitle

\section{Introduction}

Dispersal, an ecological process involving the movement of individual/multiple species, is one of the main determinants shaping the structure of ecological communities and maintaining the biodiversity \cite{hiltunen2013relative, holt1977predation,dieckmann1999evolutionary}. The causes and consequences as well as the selection and evolution of dispersal strategies have been central questions in ecology extensively investigated in the literature (cf. \cite{shurin2001effects, ron2018dispersal, zhang2017carrying}). A variety of mathematical models have been constructed to explore the effects of dispersal strategies on the population dynamics and to predict their biological consequences (cf. \cite{Cosner-review,cantrell2006movement,CCL-MBE, lou2008some, CantrellCosner, jungel2010diffusive, okubo2001diffusion}) where most of existing theoretical studies are focused on the random dispersal. However,  biological species will more likely employ the non-random dispersal strategy to optimize their ecological fitness in changing environments such as local population size, resource competition, habitat quality/size, inbreeding avoidance, crowding effect and so on. Among various possible non-random dispersal strategies, the density-dependent dispersal (meaning that the dispersal rate of one species depends on the densities of others) has been a major topic for discussion in the biological literature (cf. \cite{travis1999evolution, matthysen2005density,smith2008effects,maag2018density}). 
A prototype of two interacting species models with density-dependent dispersal generally reads as
\begin{equation}\label{m}
\begin{cases}
u_t=\mu_1 \Delta u+f(x,u,v), &\mbox{in}\;\Omega\times (0,\infty),\\
v_t=\mu_2\Delta (d(u)v)+g(x,u,v), &\mbox{in}\;\Omega\times (0,\infty),\\
\end{cases}
\end{equation}
where $u(x,t)$ and $v(x,t)$ denote the population densities of two interacting species at location $x \in \Omega$ and at time $t>0$ in a bounded habitat $\Omega\subset \mathbb{R}^{\color{black}N} (N\geq 2)$.  $\Delta=\sum_{i=1}^{{\color{black}n}} \frac{\partial^2}{\partial x_i^2}$ is the usual Laplace operator. $\mu_1$ and $\mu_2$ are positive constants accounting for the diffusion rates of the two interacting species.
The functions $f$ and $g$ describe the intra-specific and inter-specific interactions between species in a possibly heterogenous environment.
The term $\Delta (d(u)v)$ entails that the dispersal of species $v$ depends on the density of species $u$ via the dispersal rate function $d(u)$. The dispersal strategy of species $v$ is said to be random if $d(u)$ is constant, while to be density-dependent if $d(u)$ is non-constant.
Endowing $f(x,u,v)$ and $g(x,u,v)$ with different forms, model \eqref{m} may include many well-known mathematical biology models with density-dependent dispersal, such as the Keller-Segel model \cite{KS71-traveling,Keller-Segel-JTB-1971} describing chemotaxis, density-suppressed motility model \cite{Fu-etal-PRL-2012} describing the bacterial strip pattern formation driven by self-trapping mechanism, and prey-taxis system \cite{Kareiva, tyutyunov2017prey}, starvation-driven diffusion \cite{Chunetc, Cho-Kim-2013}. Moreover model \eqref{m} with competitive dynamics can be regarded as a special case of Shigesada-Kawasaki-Teramoto (STK) competition model originally proposed in  \cite{shigesada1979spatial} (see also \cite{louNi1996diffusion}). Most of existing theoretical studies of \eqref{m} have been focused on the random dispersal for which  a large number of results have been developed on the steady state problem of \eqref{m}, for example see \cite{Dancer1984, Dancer,du1997some, du2007allee, Li-Wu-Dong-CVPDE, Li-Wu-Dong-Nonlinearity, nakashima1996positive, yi2009bifurcation}) for predator-prey systems  and \cite{dockery1998evolution, Lam-Memo,LamNi, lou2006effects, he2016global}) for competition systems;  see also \cite{CantrellCosner, Cosner-review} and references therein.  In recent years mathematical models with density-dependent dispersal have increasingly  received attentions.
Among other things, this paper is concerned with the following predator-prey system with density-dependent dispersal proposed in \cite{Kareiva}
\begin{equation}\label{KO}
\begin{cases}
u_{t}=\epsilon\Delta u+u(m(x)-u)-vF(u), &\mbox{in}\;\Omega\times \mathbb{R}^+,\\
v_{t}=\mu \Delta(d(u) v)+\alpha vF(u)-\theta v, &\mbox{in}\;\Omega\times \mathbb{R}^+,\\
\nabla u \cdot \n=\nabla v \cdot \n=0,&\mbox{on}\;\partial\Omega\times \mathbb{R}^+,\\
\end{cases}
\end{equation}
where $u$ denotes the prey density and $v$ the predator density; $\epsilon>0$ and $\mu>0$ account for the diffusion rates of the prey and predator, respectively; $F(u)$ denotes the functional response function and $\alpha>0$ is the conversion rate, $\theta>0$ is the predator's motility rate.  $\n$ denotes the outward unit normal vector of $\partial \Omega$ and  Neumann boundary conditions are prescribed to warrant that no individual crosses the habitat boundary. The system \eqref{KO} is a special form of prey-taxis models proposed in \cite{Kareiva} to describe the non-random foraging behavior of predators, where the dispersal rate function $d(u)$ satisfies the property $d'(u)<0$ complying with the field observation that the predator will reduce its random motility in the area of higher density of prey.

When $d(u)$ is constant, model \eqref{KO} becomes the classical diffusive predator-prey systems extensively studied in the literature, such as the steady-state problem (cf. \cite{Dancer1984, Dancer,du1997some, du2007allee, Li-Wu-Dong-CVPDE, Li-Wu-Dong-Nonlinearity, nakashima1996positive, yi2009bifurcation}) and traveling wave problem (cf. \cite{Huang-tws1, Huang-tws2}), just to mention some.
In contrast the available results for non-constant $d(u)$ are much less. The global boundedness of classical solutions and stability of constant steady states of \eqref{KO} with constant $m(x)$ was first established in \cite{jin2021global}. When $d(u)$ is a special form of piecewise decreasing function, the existence of non-constant steady state solutions of  \eqref{KO} with non-constant $m(x)$ for large $\epsilon>0$  was obtained in \cite{choi2023predator} under certain conditions and effect of predator satisfaction on predator's survival was examined. When $F(u)$ is replaced by a Leslie-Gower type functional response function, the global boundedness of solutions and global stability of constant positive solutions were recently obtained in \cite{mi2023global} for constant $m(x)$. From application point of view, an important question is how the density-dependent dispersal rate function $d(u)$ plays roles in the population dynamics, which, however, has not been explored in the above mentioned works.

The goal of this paper is to explore the steady state problem of \eqref{KO} with non-constant $d(u)$ and $m(x)$ to find conditions under which positive solutions exists, by which we pinpoint the role of density-dependent dispersal on the species coexistence. This is not only a biologically relevant  question (since coexistence of species is the center question concerned in ecology), but also an interesting mathematical question due to the inherent cross-diffusion structure in the model which makes many conventional methods fail to use.
The steady state problem of \eqref{KO}  reads as
\begin{equation}\label{2022032801}
\begin{cases}
\epsilon\Delta u+u(m(x)-u)-vF(u)=0, &\mbox{in}\;\Omega,\\
\mu\Delta (d(u)v)+\alpha vF(u)-\theta v=0, &\mbox{in}\;\Omega,\\
\nabla u \cdot \n=\nabla v \cdot \n=0,&\mbox{on}\;\partial\Omega.
\end{cases}
\end{equation}
Throughout the paper, we make the following basic assumptions on $m(x), F(u)$ and $d(u)$:
\begin{enumerate}
\item[($H_1$)]\quad $m\in C(\bar{\Omega})$, $\int_\Omega m(x)\mathrm{d}x>0$ and $m$ is not constant;\vspace{0.1cm}
\item[($H_2$)]\quad  $F(0)=0$ and $F'(u)>0$ for all $u\in[0,\infty)$;\vspace{0.1cm}
\item[($H_3$)]\quad $d(u)\in C^2([0,\infty))$, $d(u)>0$ and $d'(u)\leq, \not \equiv 0$ on $[0,\infty)$.
\end{enumerate}
The assumption ($H_1$) indicates that the resource $m(x)$ could be beneficial or harmful but the total mass of the resource is advantageous. The assumption ($H_2$) gives some basic property of the functional response function which can be fulfilled by a large class of function like Holling type I, II and III. The assumption ($H_3$) indicates that the random diffusion of the predator decreases with respect to the prey density, which is a biological postulation as in \cite{Kareiva}.
Though system \eqref{2022032801} has a cross-diffusion structure, fortunately we can circumvent this obstacle by
invoking a change of variable
\begin{equation}\label{trans}
w:=d(u)v
\end{equation}
 which reformulates   \eqref{2022032801} to the following elliptic problem without cross-diffusion
\begin{equation}\label{KO2}
\begin{cases}
\epsilon\Delta u+u(m(x)-u)-\frac{F(u)}{d(u)}w=0, &\mbox{in}\;\Omega,\\
\mu\Delta w+\frac{\alpha F(u)-\theta}{d(u)}w=0, &\mbox{in}\;\Omega,\\
\nabla u \cdot \n=\nabla w \cdot \n=0,&\mbox{on}\;\partial\Omega.
\end{cases}
\end{equation}
The reformulated problem \eqref{KO2} has conventional random diffusions only and many well-developed methods are potentially applicable.  However the reaction terms in \eqref{KO2} become more complicated under the transformation \eqref{trans} and existing methods and results for the predator-prey systems can not be applied directly. For example, the function $F(u)$ in \eqref{2022032801} is monotonic but the function $\frac{F(u)}{d(u)}$ in the transformed system \eqref{KO2} is no longer monotonic, which makes the analysis more difficult. The main goal of this paper is to find the existence conditions for the positive solutions of \eqref{2022032801} and hence pinpoint the effects of density-dependent dispersal on the coexistence of species. Noting that the existence/non-existence of positive solutions of \eqref{2022032801} is equivalent to that of \eqref{KO2} via the transformation \eqref{trans}, in what follows we shall focus on the transformed system \eqref{KO2} and fully exploit its structure alongside the delicate analysis to show that the parameter regimes of species coexistence is broadened by the density-dependent dispersal. This implies that the density-dependent dispersal is an advantageous strategy of increasing the biodiversity in a heterogenous landscape.

The main results of this paper consist of two parts. The first part is to find conditions for the existence and non-existence of positive (coexistence) solutions of \eqref{KO2}, which are given in Theorem \ref{thm01} by which we are able to  pinpoint the positive role of density-dependent dispersal in promoting the species coexistence. The second part is to further explore the uniqueness and asymptotic profiles of positive solutions in some limiting cases of large/small diffusion rates $\epsilon$ and $\mu$, see Theorem \ref{thm5} (large $\epsilon$), Theorem \ref{thm4} (large $\mu$) and Theorem \ref{thm6} (small $\mu$).  The uniqueness of solutions  is not only a difficult mathematical question for elliptic problems in general, but also predict the possible long time dynamics of the system, while the solution profiles describe the spatial distribution patterns of species. These results can be carried over to the original problem \eqref{2022032801} directly via the transform \eqref{trans}.

The rest of this paper is organized. In section 2, we shall study the eigenvalue problem of \eqref{KO2} and find the conditions for the stability/instability of the unique semi-trivial solution. Furthermore we establish some preliminary results for later use. In section 3, we employ the topological degree method (index theory) to show that the instability of the semi-trivial solutions ensures the existence of positive solutions and hence to   establish our main result on the existence/non-existence of positive solutions of \eqref{KO2}. In section 4, we prove the uniqueness and characterize the asymptotical profile of positive solutions of \eqref{KO2} for large/small diffusion rates $\epsilon$ and $\mu$.

\section{Stability of semi-trivial solutions}
In this section, we study the eigenvalue problem associated with the problem \eqref{KO2} and give some conditions for the stability/instability of the semi-trivial solution of \eqref{KO2}. We begin with the following linear eigenvalue problem
\begin{equation}\label{2022032803}
\begin{cases}
\ell\Delta \phi+r(x)\phi=\lambda\phi, &\mbox{in}\;\Omega,\\
\nabla \phi \cdot \n=0,&\mbox{on}\;\partial\Omega,
\end{cases}
\end{equation}
where $r\in C({\Omega})$. We denote the principal eigenvalue and eigenfunction by $\lambda_1(\ell,r)$ and $\phi_1(\ell,r)$, respectively, where one can choose $\phi_1(\ell,r)>0$ and $\|\phi_1(\ell,r)\|_{L^\infty}=1$ and there is no other eigenvalue with a positive eigenfunction \cite{KreinRutman1948}. Moreover, by the variational approach,
$\lambda_1(\ell,r)$ can be characterized as
\begin{equation}\label{2022061703}
\lambda_1(\ell,r)=\sup\limits_{0\neq\phi\in H^1(\Omega)}\frac{\int_\Omega(-\ell|\nabla \phi|^2+r\phi^2)\mathrm{d}x}{\int_\Omega\phi^2\mathrm{d} x}.
\end{equation}

Under the assumption $(H_1)$, it is straightforward to see (cf. \cite{CantrellCosner}) that system \eqref{KO2} admits a unique semi-trivial solution $(\tilde{u},0)$ for any $\epsilon>0$ , where $\tilde{u}>0$ satisfies
\begin{equation}\label{2022062101}
\begin{cases}
\epsilon\Delta \tilde{u}+\tilde{u}(m(x)-\tilde{u})=0, &\mbox{in}\;\Omega,\\
\nabla \tilde{u}\cdot \n=0,&\mbox{on}\;\partial\Omega.
\end{cases}
\end{equation}
The problem \eqref{2022062101} has been well studied in the literature and there are wealthy results available (cf. \cite{CantrellCosner, Ni-book-SIAM-2011}). Below we cite a result that shall be used later.

\begin{proposition}(\cite[Propositon 2.5]{LamNi})\label{prop2}
The problem \eqref{2022062101} has a unique positive solution $\tilde{u}$ satisfying
\begin{itemize}
\item[(i)] $\tilde{u}\to m^+=\max\{m,0\}$ in $L^\infty(\Omega)$ as $\epsilon\to0$;\vspace{0.1cm}

\item[(ii)] $\tilde{u}\to  \frac{1}{|\Omega|}\int_\Omega m\mathrm{d}x$ in $L^\infty(\Omega)$ as $\epsilon\to+\infty$.
\end{itemize}
\end{proposition}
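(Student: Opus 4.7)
The plan is to establish existence and uniqueness first and then derive the two asymptotic profiles separately via energy/eigenvalue arguments.

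For existence I would use the sub/super-solution method: the constant $\bar u = \|m\|_{L^\infty}$ is a super-solution, while $\underline u = \delta \phi_1(\epsilon,m)$ is a sub-solution for $\delta>0$ sufficiently small, where $\phi_1(\epsilon,m)>0$ is the principal eigenfunction in \eqref{2022032803} with $r = m$. Testing the variational characterization \eqref{2022061703} against the constant function together with $(H_1)$ gives $\lambda_1(\epsilon,m)\geq |\Omega|^{-1}\int_\Omega m\,dx>0$, which is what makes $\underline u$ admissible once $\delta$ is small enough, and monotone iteration between $\underline u$ and $\bar u$ delivers a positive solution. For uniqueness, given two positive solutions $u_1, u_2$, divide each equation by its own solution and subtract to obtain $\epsilon(\Delta u_1/u_1 - \Delta u_2/u_2) = u_1 - u_2$, multiply by $u_1^2 - u_2^2$, and integrate by parts (the Neumann condition kills all boundary terms). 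The left-hand side rearranges to $-\epsilon\int_\Omega (u_1^2+u_2^2)|\nabla\log(u_1/u_2)|^2\,dx \leq 0$, while the right-hand side is $\int_\Omega(u_1-u_2)^2(u_1+u_2)\,dx \geq 0$, so both sides vanish and $u_1\equiv u_2$.

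For (i), a uniform bound $\tilde u_\epsilon \leq \|m\|_{L^\infty}$ follows by the maximum principle: at an interior maximum $\tilde u_\epsilon(m-\tilde u_\epsilon)\geq 0$, while a boundary maximum is excluded by Hopf's lemma combined with the Neumann condition. Testing the equation against $\tilde u_\epsilon$ yields the energy identity $\epsilon\int_\Omega |\nabla\tilde u_\epsilon|^2 + \int_\Omega \tilde u_\epsilon^3 = \int_\Omega m\tilde u_\epsilon^2$, providing a uniform bound on $\epsilon\int_\Omega |\nabla\tilde u_\epsilon|^2$. Extracting a weak-$*$ $L^\infty$ limit $u^*$ along $\epsilon_n\to 0$ and testing \eqref{2022062101} against $\phi\in C^1(\bar\Omega)$, the diffusion term drops out by Cauchy--Schwarz, giving $\int_\Omega u^*(m-u^*)\phi\,dx = 0$ and hence $u^*(m-u^*)=0$ a.e., so $u^*\in\{0,m\}$ pointwise. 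To exclude $u^*\equiv 0$ on $\{m>0\}$, for each $x_0$ with $m(x_0)>0$ I would localize on a ball $B\subset\{m\geq m(x_0)/2\}$, note that the Dirichlet principal eigenvalue of $-\epsilon\Delta - m$ on $B$ is negative for small $\epsilon$, and extend the corresponding Dirichlet eigenfunction by zero to produce a compactly supported subsolution, forcing $\tilde u_\epsilon \geq c_0 > 0$ on a neighborhood of $x_0$. Upgrading pointwise to uniform convergence then uses the continuity of $m^+$ together with local Moser-type estimates.

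For (ii), the same bound $\tilde u_\epsilon\leq \|m\|_{L^\infty}$ is available and the energy identity now gives $\int_\Omega|\nabla\tilde u_\epsilon|^2 \leq \epsilon^{-1}\int_\Omega m\tilde u_\epsilon^2 \to 0$, so Poincaré's inequality yields $\tilde u_\epsilon - \bar{\tilde u}_\epsilon \to 0$ in $H^1$, where $\bar{\tilde u}_\epsilon$ is the spatial mean. Integrating the equation over $\Omega$ gives $\int_\Omega \tilde u_\epsilon(m-\tilde u_\epsilon)\,dx=0$; combined with strong $L^2$ convergence to a constant $c\geq 0$, this forces $c=0$ or $c = |\Omega|^{-1}\int_\Omega m\,dx$. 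The case $c=0$ is excluded by observing that $\tilde u_\epsilon$ is a positive eigenfunction of $\epsilon\Delta + (m-\tilde u_\epsilon)$ with zero eigenvalue, so \eqref{2022061703} tested against the constant yields $\int_\Omega \tilde u_\epsilon\,dx \geq \int_\Omega m\,dx>0$ uniformly in $\epsilon$. Finally $v_\epsilon := \tilde u_\epsilon - \bar{\tilde u}_\epsilon$ has zero mean, satisfies Neumann boundary data, and obeys $\Delta v_\epsilon = -\epsilon^{-1}\tilde u_\epsilon(m-\tilde u_\epsilon) = O(1/\epsilon)$ in $L^\infty$, so Neumann $W^{2,p}$ estimates and Sobolev embedding upgrade the convergence to $L^\infty$.

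I expect the main obstacle to be the rigorous passage from weak/pointwise to uniform convergence in (i): since $m$ is only continuous under $(H_1)$, the limit $m^+$ is not smooth and uniform Hölder estimates on $\tilde u_\epsilon$ in $\epsilon$ are unavailable, so one must combine the localized subsolution on $\{m>0\}$ with a complementary upper barrier on $\{m<0\}$ (where the reaction term acts as a sink) and handle the transition region $\{m\approx 0\}$ with particular care.
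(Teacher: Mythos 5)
The paper gives no proof of this proposition: it is quoted directly from \cite[Proposition 2.5]{LamNi}, so there is nothing in-paper to compare against and your attempt has to stand on its own. Your existence and uniqueness arguments (the pair $\delta\phi_1(\epsilon,m)$, $\|m\|_{L^\infty}$, validated by $\lambda_1(\epsilon,m)\geq|\Omega|^{-1}\int_\Omega m\,dx>0$ from \eqref{2022061703} and $(H_1)$, and the identity whose left side is $-\epsilon\int_\Omega(u_1^2+u_2^2)|\nabla\log(u_1/u_2)|^2\,dx$) are the standard ones and are correct. Part (ii) is also essentially complete: the energy identity kills the gradient, the integrated equation pins the constant limit to $0$ or $\bar m$, the case $c=0$ is excluded by testing $\lambda_1(\epsilon,m-\tilde u_\epsilon)=0$ against constants, and the zero-mean Neumann $W^{2,p}$ estimate upgrades to $L^\infty$.

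Part (i), however, has a genuine gap, and it sits exactly where the content of the statement lies. First, the identification $\int_\Omega u^*(m-u^*)\phi\,dx=0$ does not follow from weak-$*$ $L^\infty$ convergence: passing to the limit in $\int_\Omega\tilde u_\epsilon^2\phi\,dx$ needs strong $L^2_{\mathrm{loc}}$ convergence, and the only gradient information available is $\epsilon\int_\Omega|\nabla\tilde u_\epsilon|^2\,dx\leq C$, i.e.\ $\|\nabla\tilde u_\epsilon\|_{L^2}=O(\epsilon^{-1/2})$, which gives no compactness (and none should be expected, since the limit $m^+$ is only continuous). Second, even granting $u^*=m^+$ a.e., the proposition asserts $L^\infty(\Omega)$ convergence; the uniform upper bound $\tilde u_\epsilon\leq m^++\eta$ --- in particular $\tilde u_\epsilon\to0$ uniformly on compact subsets of $\{m<0\}$ and control across $\{m=0\}$ --- is the hard half and you only gesture at it (``local Moser-type estimates'', an unspecified upper barrier). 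The standard repair does both jobs at once and bypasses the weak limit entirely: for each $x_0$ and $\eta>0$ pick a ball $B_r(x_0)$ on which $|m-m(x_0)|<\eta$; compare $\tilde u_\epsilon$ from above with the solution of the constant-coefficient logistic equation on $B_r$ with rate $m(x_0)+\eta$ and boundary value $\max_{\bar\Omega}m$, which tends to $(m(x_0)+\eta)^+$ locally uniformly in $B_r$ as $\epsilon\to0$, and from below (when $m(x_0)>0$) with $\delta\varphi$, where $\varphi$ is the Dirichlet principal eigenfunction of $-\Delta$ on $B_r$ normalized so that $\delta\max\varphi=m(x_0)-2\eta$; a finite cover of $\bar\Omega$ by such balls then yields $m^+-C\eta\leq\tilde u_\epsilon\leq m^++C\eta$ for all small $\epsilon$. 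A related caveat on your lower bound as written: the Dirichlet eigenfunction of $-\epsilon\Delta-m$ on $B$ concentrates near the maximum point of $m$ in $B$ as $\epsilon\to0$, so it does not by itself give a bound $\tilde u_\epsilon\geq c_0>0$ that is uniform on a fixed neighborhood of $x_0$; the fixed (\,$\epsilon$-independent\,) eigenfunction of $-\Delta$ avoids this degeneration.
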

We also collect some results on the principal eigenvalue and eigenfunction of \eqref{2022032803}.
\begin{lemma}\label{lem01}
If $r\in C(\Omega)$, then the following statements on the principal eigenvalue $\lambda_1(\ell,r)$ and eigenfunction $\phi_1(\ell,r)$ of problem~\eqref{2022032803} are true.
\begin{itemize}
\item[(i)] $\lambda_1(\ell,r)$ and $\phi_1(\ell,r)$ depend smoothly on parameters $\ell\in(0,+\infty)$ and continuously on $r\in C(\Omega)$.

\item[(ii)] If $r$ is constant on $(0,L)$, then $\lambda_1(\ell,r)=r$; otherwise, the principal eigenvalue $\lambda_1(\ell,r)$ is strictly decreasing with respect to $\ell\in(0,+\infty)$ and
\begin{equation}\label{2022082901}
\lim\limits_{\ell\to0}\lambda_1(\ell,r)=\max\limits_{x\in\bar{\Omega}}r(x)
\quad \mbox{and\quad $\lim\limits_{\ell\to+\infty}\lambda_1(\ell,r)=\frac{1}{|\Omega|}\int_\Omega r(x)\mathrm{d}x$.}
\end{equation}

\item[(iii)] If $r_i\in C(\Omega)$  ($i=1$, $2$)  and $r_1\geq,\not\equiv r_2$ in $\Omega$, then $\lambda_1(\ell,r_1)>\lambda_1(\ell,r_2)$.
\end{itemize}
\end{lemma}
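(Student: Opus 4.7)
The three assertions are essentially a compendium of well-known facts about principal eigenvalues of second-order self-adjoint elliptic operators with Neumann boundary conditions, so the plan is to organize them around the Rayleigh quotient \eqref{2022061703} and standard analytic perturbation theory.

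For part (i), I would rely on the simplicity and isolation of $\lambda_1(\ell,r)$, which follow from the Krein–Rutman theorem already cited. Viewing $\ell \mapsto \ell\Delta + r(x)\mathrm{I}$ as a real-analytic family (of type (A) in the sense of Kato) of self-adjoint operators in $L^2(\Omega)$ with common domain $\{\phi\in H^2(\Omega):\nabla\phi\cdot n=0\}$, analytic perturbation theory yields smooth (in fact analytic) dependence of both $\lambda_1(\ell,r)$ and the normalized $\phi_1(\ell,r)$ on $\ell\in(0,\infty)$. Continuous dependence on $r$ in the $C(\overline\Omega)$ topology is obtained by the same framework, since the multiplication operator $\phi\mapsto r\phi$ depends continuously on $r$ in operator norm and the eigenvalue is isolated.

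For part (ii), the constant case is immediate by taking $\phi\equiv 1$. When $r$ is non-constant, strict monotonicity in $\ell$ follows directly from \eqref{2022061703}: if $0<\ell_1<\ell_2$, then using the maximizer $\phi_1(\ell_2,r)$ as a test function in the Rayleigh quotient for $\ell_1$ gives
\begin{equation*}
\lambda_1(\ell_1,r)\ \geq\ \frac{\int_\Omega\bigl(-\ell_1|\nabla\phi_1(\ell_2,r)|^2+r\phi_1(\ell_2,r)^2\bigr)\mathrm{d}x}{\int_\Omega \phi_1(\ell_2,r)^2\mathrm{d}x}\ >\ \lambda_1(\ell_2,r),
\end{equation*}
where strictness uses $\int_\Omega|\nabla\phi_1(\ell_2,r)|^2>0$; this gradient cannot vanish, because otherwise $\phi_1(\ell_2,r)$ would be a positive constant and the eigen-equation would force $r\equiv\lambda_1(\ell_2,r)$, contradicting non-constancy of $r$. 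For the limit $\ell\to+\infty$, the bound $\lambda_1(\ell,r)\leq\lambda_1(\ell_0,r)$ for large $\ell$ together with the relation $\int_\Omega|\nabla\phi_1(\ell,r)|^2\mathrm{d}x = \ell^{-1}\int_\Omega(r-\lambda_1(\ell,r))\phi_1(\ell,r)^2\mathrm{d}x$ forces $\nabla\phi_1(\ell,r)\to 0$ in $L^2$; normalizing in $L^2$ and extracting a weak $H^1$ limit shows $\phi_1(\ell,r)$ converges to a positive constant, after which integrating the eigen-equation over $\Omega$ yields $\lambda_1(\ell,r)\to|\Omega|^{-1}\int_\Omega r\,\mathrm{d}x$. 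For $\ell\to 0$, choose $x_0\in\overline\Omega$ with $r(x_0)=\max_{\overline\Omega}r$ and insert a family of Gaussian-like bump functions $\phi_\delta$ (modified near $\partial\Omega$ if $x_0\in\partial\Omega$) into \eqref{2022061703} with variance tuned to $\ell$, so that $\int r\phi_\delta^2/\int\phi_\delta^2\to r(x_0)$ and $\ell\int|\nabla\phi_\delta|^2/\int\phi_\delta^2\to 0$; this yields $\liminf_{\ell\to0}\lambda_1(\ell,r)\geq\max r$, while the reverse inequality is trivial from \eqref{2022061703}.

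For part (iii), using $\phi_1(\ell,r_2)$ as a test function in the variational formula for $r_1$ gives
\begin{equation*}
\lambda_1(\ell,r_1)\ \geq\ \frac{\int_\Omega\bigl(-\ell|\nabla\phi_1(\ell,r_2)|^2+r_1\phi_1(\ell,r_2)^2\bigr)\mathrm{d}x}{\int_\Omega\phi_1(\ell,r_2)^2\mathrm{d}x}\ =\ \lambda_1(\ell,r_2)+\frac{\int_\Omega(r_1-r_2)\phi_1(\ell,r_2)^2\mathrm{d}x}{\int_\Omega\phi_1(\ell,r_2)^2\mathrm{d}x},
\end{equation*}
and the last term is strictly positive since $\phi_1(\ell,r_2)>0$ on $\overline\Omega$ and $r_1\geq,\not\equiv r_2$.

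The main obstacle I anticipate is the $\ell\to 0$ limit in part (ii): the trial-function construction must be adapted according to whether the maximum of $r$ is attained in the interior or on $\partial\Omega$, and the Gaussian scaling must be balanced so that both concentration (to capture $\max r$) and the kinetic penalty $\ell|\nabla\phi|^2$ are controlled. The other items reduce to one-line applications of the Rayleigh quotient or of standard analytic perturbation theory.
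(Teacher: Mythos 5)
Your proposal is correct. The paper itself gives no proof of this lemma, simply citing \cite{CantrellCosner} and \cite{LamNi} for these standard facts, and your arguments (analytic perturbation theory for (i), Rayleigh-quotient test-function comparisons for the monotonicity statements and for (iii), the energy identity plus compactness for $\ell\to+\infty$, and concentrating trial functions for $\ell\to0$) are precisely the standard proofs found in those references, so there is nothing to compare beyond noting that you have supplied the details the paper delegates.
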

\begin{proof}
The proofs of statements (i)-(iii) are quite standard. See, for example, \cite[Page 95 and Page 162]{CantrellCosner} and~\cite{LamNi}.
\end{proof}

Then, we define the notion of linear stability of a given steady state $(u,w)$.  The eigenvalue problem of the linearized system \eqref{KO2} at $(u,w)$, reads
\begin{equation}\label{2022061702}
\begin{cases}
\epsilon\Delta \phi+(m-2u)\phi-\left(\frac{F(u)}{d(u)}\right)' w\phi-\frac{F(u)}{d(u)}\psi=\tau\phi, &\mbox{in}\;\Omega,\\
\mu\Delta \psi+\alpha\left(\frac{F(u)}{d(u)}\right)'w\phi+\frac{\theta d'(u)}{d^2(u)}w\phi+\frac{\alpha F(u)-\theta}{d(u)}\psi  =\tau\psi, &\mbox{in}\;\Omega,\\[2mm]
\nabla \phi \cdot \n=\nabla \psi \cdot \n=0,&\mbox{on}\;\partial\Omega,
\end{cases}
\end{equation}
where $'$ denotes the differentiation with respect to $u$, and $(\phi,\psi)$ is the eigenfunction associated with the eigenvalue $\tau$.\\

Throughout the paper, the following convention will be adopted.

\begin{definition}
An eigenvalue $\tau_1$ of problem \eqref{2022061702} is called a principal eigenvalue if $\tau_1\in \mathbb{C}$ and for any eigenvalue $\tau$ with $\tau\neq\tau_1$, we have Re $\tau\leq$ Re $\tau_1$.
If Re $\tau_1<0$, then $(u,w)$ is linearly stable; while if Re $\tau_1>0$, then  $(u,w)$ is linearly unstable; we call $(u,w)$ is neutrally stable if Re $\tau_1=0$.
\end{definition}

We remark here that the principal eigenvalue of problem \eqref{2022061702} may not be unique but the real part of $\tau_1$ are equal. Following the approach as that in \cite[Lemma 2.9 and Corollary 2.10]{LamNi}, we can readily derive the following result and omit the details for brevity.
\begin{lemma}\label{lem02}
For system \eqref{KO2}, the following results hold.
\begin{itemize}
\item[(1)]
$(0,0)$ is linearly stable if and only if
$
\max\big\{\lambda_1(\epsilon,m),\lambda_1\big(\mu,-\frac{\theta}{d(0)}\big)\big\}<0.
$
\item[(2)] $(\tilde{u},0)$ is linearly stable if and only if
$
\max\big\{\lambda_1\big(\mu,\frac{\alpha F(\tilde{u})-\theta}{d(\tilde{u})}\big),\lambda_1(\epsilon,m-2\tilde{u})\big\}<0.
$
\end{itemize}
\end{lemma}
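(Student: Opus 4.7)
The plan is to exploit the block-triangular structure of the linearization \eqref{2022061702} that emerges whenever one component of the steady state vanishes. Every coupling coefficient produced by the transformation, such as $(F(u)/d(u))' w\phi$ and $(\theta d'(u)/d^{2}(u)) w\phi$, carries an explicit factor of $w$, so these terms vanish identically along any state with $w\equiv 0$. Consequently, the full eigenvalue problem at $(0,0)$ and at $(\tilde{u},0)$ decouples into two scalar Neumann problems of the form \eqref{2022032803}, and the principal eigenvalue of the linearization is simply the one with larger real part among the two corresponding scalar principal eigenvalues.

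For part (1), I substitute $(u,w)=(0,0)$ into \eqref{2022061702} and invoke $F(0)=0$ from $(H_2)$, which also kills the coupling term $F(u)/d(u)\,\psi$. The system then fully decouples into
\begin{equation*}
\epsilon\Delta\phi + m(x)\phi = \tau\phi,\qquad \mu\Delta\psi - \frac{\theta}{d(0)}\psi = \tau\psi,
\end{equation*}
both under Neumann boundary conditions. The spectrum is therefore the union of the spectra of these two scalar operators, and the principal eigenvalue equals $\max\{\lambda_1(\epsilon,m),\,\lambda_1(\mu,-\theta/d(0))\}$. Linear stability is equivalent to this maximum being negative, which is exactly the criterion in (1).

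For part (2), setting $w=0$ and $u=\tilde{u}$ reduces \eqref{2022061702} to the block-triangular system
\begin{equation*}
\epsilon\Delta\phi + (m-2\tilde{u})\phi - \frac{F(\tilde{u})}{d(\tilde{u})}\psi = \tau\phi,\qquad \mu\Delta\psi + \frac{\alpha F(\tilde{u})-\theta}{d(\tilde{u})}\psi = \tau\psi,
\end{equation*}
in which the $\psi$-equation is independent of $\phi$. I then split the eigenpairs into two cases: if $\psi\equiv 0$, the $\phi$-equation becomes a scalar Neumann problem with weight $m-2\tilde{u}$, producing eigenvalues $\lambda_i(\epsilon, m-2\tilde{u})$; if $\psi\not\equiv 0$, then $\tau$ must be an eigenvalue of $\mu\Delta + (\alpha F(\tilde{u})-\theta)/d(\tilde{u})$, that is, some $\lambda_j(\mu,(\alpha F(\tilde{u})-\theta)/d(\tilde{u}))$, and the Fredholm alternative supplies a matching $\phi$ (with a standard resonance fix in the coincident case). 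Taking the union of the two scalar spectra and then the largest real part yields $\mathrm{Re}\,\tau_1 = \max\{\lambda_1(\mu,(\alpha F(\tilde{u})-\theta)/d(\tilde{u})),\, \lambda_1(\epsilon, m-2\tilde{u})\}$, from which (2) follows.

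The main obstacle, and the reason the authors invoke the template of Lemma~2.9 and Corollary~2.10 of \cite{LamNi}, is verifying that each scalar principal eigenvalue is genuinely attained as the real part of an eigenvalue of the coupled linearization, i.e.\ that the triangular coupling creates no spurious eigenvalue with larger real part. This is a standard consequence of the upper-triangular spectral structure together with a Krein--Rutman positivity argument, but it is the only step beyond the elementary algebraic decoupling described above.
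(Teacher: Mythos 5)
Your proposal is correct and follows essentially the same route the paper intends: the paper omits the proof, citing the triangular-decoupling argument of Lemma 2.9 and Corollary 2.10 in \cite{LamNi}, and your block-triangular spectral analysis (full decoupling at $(0,0)$ using $F(0)=0$, and the union-of-spectra argument for the upper-triangular system at $(\tilde u,0)$) is exactly that standard argument, correctly executed.
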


Based on Lemma \ref{lem01} and Lemma \ref{lem02}, we have the following result.
\begin{lemma}\label{lem03}
The trivial solution $(0,0)$ of \eqref{KO2} is linearly unstable.
\end{lemma}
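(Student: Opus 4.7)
The plan is to apply Lemma \ref{lem02}(1) at $(u,w)=(0,0)$ and then use the variational characterization \eqref{2022061703} to verify the instability condition. The key structural observation is that, thanks to assumption $(H_2)$ which gives $F(0)=0$, the linearized eigenvalue problem \eqref{2022061702} at $(0,0)$ becomes
\begin{equation*}
\begin{cases}
\epsilon\Delta\phi + m(x)\phi = \tau\phi, & \text{in } \Omega,\\
\mu\Delta\psi - \tfrac{\theta}{d(0)}\psi = \tau\psi, & \text{in } \Omega,\\
\nabla\phi\cdot \n = \nabla\psi\cdot \n = 0, & \text{on }\partial\Omega,
\end{cases}
\end{equation*}
which is upper triangular and decouples into two scalar eigenvalue problems of the form \eqref{2022032803}. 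Hence the spectrum of the linearization is the union of the two scalar spectra, and the principal eigenvalue is exactly $\tau_1 = \max\{\lambda_1(\epsilon,m),\lambda_1(\mu,-\theta/d(0))\}$, which in particular is real; this is the content of Lemma \ref{lem02}(1).

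The instability then reduces to showing $\tau_1 > 0$, for which it suffices to produce a single positive term in the max. I would show $\lambda_1(\epsilon,m) > 0$ via the variational formula \eqref{2022061703}, testing against the admissible constant function $\phi \equiv 1 \in H^1(\Omega)$:
\begin{equation*}
\lambda_1(\epsilon,m) \;\geq\; \frac{\int_\Omega\bigl(-\epsilon|\nabla 1|^2 + m(x)\cdot 1^2\bigr)\,\mathrm{d}x}{\int_\Omega 1^2\,\mathrm{d}x} \;=\; \frac{1}{|\Omega|}\int_\Omega m(x)\,\mathrm{d}x \;>\; 0,
\end{equation*}
the strict positivity being exactly hypothesis $(H_1)$. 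Consequently $\tau_1 \geq \lambda_1(\epsilon,m) > 0$, and by the definition of linear stability given just before Lemma \ref{lem02}, $(0,0)$ is linearly unstable.

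There is essentially no obstacle in this argument — the only subtle point worth noting is that ``not stable'' does not automatically mean ``unstable'' in the sense of the Definition (one could be neutrally stable with $\operatorname{Re}\tau_1=0$), so it matters that we extract a strictly positive real eigenvalue rather than merely negate the stability condition in Lemma \ref{lem02}(1). The decoupling of the linearization at $(0,0)$ makes $\tau_1$ automatically real, and the test-function argument gives the strict positivity, so both requirements are met simultaneously.
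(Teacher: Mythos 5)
Your proof is correct and follows essentially the same route as the paper: both reduce the claim, via Lemma \ref{lem02}(1) and the fact $\lambda_1\big(\mu,-\tfrac{\theta}{d(0)}\big)=-\tfrac{\theta}{d(0)}<0$, to showing $\lambda_1(\epsilon,m)>0$. The only cosmetic difference is that you obtain $\lambda_1(\epsilon,m)\geq\frac{1}{|\Omega|}\int_\Omega m\,\mathrm{d}x>0$ by testing the Rayleigh quotient \eqref{2022061703} with $\phi\equiv1$, whereas the paper derives the same lower bound from the strict monotonicity of $\lambda_1(\ell,m)$ in $\ell$ and its limit as $\ell\to+\infty$ given in Lemma \ref{lem01}.
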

\begin{proof}
From Lemma \ref{lem01} and assumption $(H_1)$, it follows that
$$
\lambda_1(\epsilon,m)>\lim\limits_{\mu\to+\infty}\lambda_1(\mu,m)=\frac{\int_\Omega m\mathrm{d}x}{|\Omega|}>0,
$$
which alongside Lemma \ref{lem02} and the fact $\lambda_1\big(\mu,-\frac{\theta}{d(0)}\big)=-\frac{\theta}{d(0)}<0$ shows that $(0,0)$ is linearly unstable.
\end{proof}

Next, we study the stability of semi-trivial solution $(\tilde{u},0)$ to system \eqref{KO2}.
\begin{lemma}\label{lem04*}
$(\tilde{u},0)$ is linearly stable if and only if
$\lambda_1\left(\mu,\frac{\alpha F(\tilde{u})-\theta}{d(\tilde{u})}\right)<0.$
\end{lemma}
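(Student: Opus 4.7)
The plan is to reduce the two-inequality characterization from Lemma \ref{lem02}(2) to the single condition stated, by proving that the prey-component eigenvalue $\lambda_1(\epsilon,m-2\tilde u)$ is automatically negative at the semi-trivial steady state. This is the standard ``semi-trivial prey state is stable in its own component'' observation for a logistic equation, and it is what lets the predator-component eigenvalue decide stability.

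More precisely, I would first recall from Lemma \ref{lem02}(2) that $(\tilde u,0)$ is linearly stable iff both $\lambda_1\bigl(\mu,\tfrac{\alpha F(\tilde u)-\theta}{d(\tilde u)}\bigr)<0$ and $\lambda_1(\epsilon,m-2\tilde u)<0$. So the lemma follows once I show the second inequality is automatic. For this, note that $\tilde u>0$ solves $\epsilon\Delta\tilde u+(m-\tilde u)\tilde u=0$ in $\Omega$ with Neumann data, so $\tilde u$ itself is a positive eigenfunction of the operator $\phi\mapsto \epsilon\Delta\phi+(m-\tilde u)\phi$ with eigenvalue $0$. By the uniqueness statement that accompanies Krein–Rutman (there is no other eigenvalue admitting a positive eigenfunction), this forces
\begin{equation*}
\lambda_1(\epsilon,m-\tilde u)=0.
\end{equation*}

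Next I would use the strict monotonicity of the principal eigenvalue in the weight, Lemma \ref{lem01}(iii). Since $\tilde u>0$ in $\bar\Omega$ (by the strong maximum principle applied to \eqref{2022062101}), we have $m-2\tilde u< m-\tilde u$ pointwise in $\Omega$ and they are not identically equal. Therefore
\begin{equation*}
\lambda_1(\epsilon,m-2\tilde u)<\lambda_1(\epsilon,m-\tilde u)=0.
\end{equation*}
Combining this with Lemma \ref{lem02}(2) gives the equivalence claimed.

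I do not anticipate any genuine obstacle here: the only subtle point is the identification $\lambda_1(\epsilon,m-\tilde u)=0$ via $\tilde u$ serving as its own positive eigenfunction, which in turn relies on positivity of $\tilde u$ throughout $\bar\Omega$ (a standard maximum-principle fact for the logistic Neumann problem). The strict inequality then comes for free from Lemma \ref{lem01}(iii), so the remaining condition on the predator weight $\tfrac{\alpha F(\tilde u)-\theta}{d(\tilde u)}$ alone governs linear stability of $(\tilde u,0)$.
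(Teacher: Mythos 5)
Your argument is correct and is essentially identical to the paper's proof: both reduce the claim to Lemma \ref{lem02}(2) and show $\lambda_1(\epsilon,m-2\tilde u)<0$ automatically, via $\lambda_1(\epsilon,m-\tilde u)=0$ and the strict monotonicity in Lemma \ref{lem01}(iii). You merely spell out the justification of $\lambda_1(\epsilon,m-\tilde u)=0$ (with $\tilde u$ as its own positive eigenfunction), which the paper takes as known.
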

\begin{proof}
From $\lambda_1(\epsilon,m-\tilde{u})=0$ and Lemma \ref{lem01} (iii), it follows that $\lambda_1(\epsilon,m-2\tilde{u})<0$. This alongside Lemma \ref{lem02} implies that $(\tilde{u},0)$ is linearly stable if and only if
$\lambda_1\left(\mu,\frac{\alpha F(\tilde{u})-\theta}{d(\tilde{u})}\right)<0.$
\end{proof}

In the sequel, in some cases, instead of general density-dependent dispersal rate function $d(u)$, we shall consider the following specialized forms for the definiteness
\begin{equation}\label{du}
d(u):=d(u;k)=e^{-ku} \ \text{or}\ (1+u)^{-k}, \ \ k\geq 0.
\end{equation}
Subsequent to this,  we shall denote
\begin{equation}\label{thetak}
\theta_0=\frac{\alpha}{|\Omega|} \int_\Omega F(\tilde{u})dx, \ \theta_k=\frac{\alpha \int_\Omega\frac{ F(\tilde{u})}{d(\tilde{u};k)}\mathrm{d}x}{\int_\Omega\frac{1}{d(\tilde{u};k)}\mathrm{d}x}\  \text{for} \ k> 0.
\end{equation}
We also denote
$$\tilde{u}_{\min}=\min\limits_{x\in\bar{\Omega}}\tilde{u}\ \ \mathrm{and}\ \  \tilde{u}_{\max}=\max\limits_{x\in\bar{\Omega}}\tilde{u}.$$ Then it follows from Lemma \ref{lem01} and assumption $(H_2)$ that $\alpha F(\tilde{u}_{\min})<\lambda_1(\mu,\alpha F(\tilde{u}))<\alpha F(\tilde{u}_{\max})$.
\\

Then we have the following key results.

\begin{lemma}\label{lem04}
There exists some $\tilde{\theta}\in(\alpha F(\tilde{u}_{\min}),\alpha F(\tilde{u}_{\max}))$ satisfying $\lambda_1\left(\mu,\frac{\alpha F(\tilde{u})-\tilde{\theta}}{d(\tilde{u})}\right)=0$ such that
\begin{equation}\label{2022062102}
(\tilde{u},0) \ \text{is}\ \begin{cases}
\mbox{linearly stable}& \ \text{if} \ \ \theta>\tilde{\theta},\\
\mbox{linearly unstable}& \ \text{if}\ \ 0\leq\theta<\tilde{\theta},
\end{cases}
\end{equation}
and hence
\begin{equation}\label{2022062103}
(\tilde{u},0)\ \text{is}\ \begin{cases}
\mbox{linearly stable}& \ \text{if} \ \ \theta\geq\alpha F(\tilde{u}_{\max}),\\
\mbox{linearly unstable}& \ \text{if} \ \ 0\leq\theta\leq\alpha F(\tilde{u}_{\min}).
\end{cases}
\end{equation}
Moreover, for any $\theta\in(\alpha F(\tilde{u}_{\min}),\alpha F(\tilde{u}_{\max}))$, if $d(u)=d(u;k)$, where $d(u;k)=e^{-ku}$ or $(1+u)^{-k}$ with $k\geq 0$,  the following results on the linear stability/instability of $(\tilde{u},0)$ hold true.
\begin{itemize}
\item[(i)]
{
Fixing all the parameters except $\mu$, if $\theta\in\left(\alpha F(\tilde{u}_{\min}),\theta_k\right]$, then $(\tilde{u},0)$ is linearly unstable for any $\mu>0$; while if
$\theta\in\left(\theta_k,\alpha F(\tilde{u}_{\max})\right)$, then there exits some $\mu^*>0$ (depending on $k$ and $\theta$) satisfying $\lambda_1\left(\mu^*,\frac{\alpha F(\tilde{u})-\theta}{d(\tilde{u};k)}\right)=0$ such that
$$
(\tilde{u},0)\ \text{is}\ \begin{cases}
\mbox{linearly stable}& \ \text{if} \ \ {\mu} >\mu^*,\\
\mbox{linearly unstable}& \ \text{if} \ \ 0<{\mu} <\mu^*.
\end{cases}
$$
}

\item[(ii)]
{
Fix all the parameters except ${\mu} $ and $k$.  We have the following statements.
\begin{itemize}
\item[(ii.1)]
{If $\theta\in\left(\alpha F(\tilde{u}_{\min}),\theta_0\right]$, then $(\tilde{u},0)$ is linearly unstable for any ${\mu} >0$ and $k\geq0$.
    }

\item[(ii.2)]
{If $\theta\in\left[\theta_0,\alpha F(\tilde{u}_{\max})\right)$, there exists $k^*(\theta)>0$ satisfying
\begin{equation}\label{2022062802}
\mathrm{sgn}\bigg(\int_\Omega\frac{\alpha F(\tilde{u})-\theta}{d(\tilde{u};k)}\mathrm{d}x\bigg)=\mathrm{sgn}(k-k^*)
\end{equation}
such that $(\tilde{u},0)$ is linearly unstable for any ${\mu} >0$ provided that $k\geq k^*$. Moreover, there exists some $\tilde{\mu}$ such that $(\tilde{u},0)$ is linearly unstable for any $k\in[0,k^*)$ and ${\mu} \in(0,\tilde{\mu})$.}
\end{itemize}
}
\end{itemize}
\end{lemma}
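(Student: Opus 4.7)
The overall strategy is to reduce every claim to the sign of a single principal eigenvalue. By Lemma~\ref{lem04*}, $(\tilde u,0)$ is linearly stable iff $\lambda_1(\mu,r)<0$ with $r=r(k,\theta):=(\alpha F(\tilde u)-\theta)/d(\tilde u;k)$. All four parts then follow by exploiting the two monotonicities of $\lambda_1$ in Lemma~\ref{lem01}: strict monotonicity in the weight (part~(iii)) and strict monotonicity in the diffusion rate together with the $\ell\to 0^+,\infty$ limits (part~(ii)).

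For the main dichotomy \eqref{2022062102}--\eqref{2022062103}, note that $r$ is pointwise strictly decreasing in $\theta$, so Lemma~\ref{lem01}(iii) makes $\theta\mapsto\lambda_1(\mu,r)$ strictly decreasing and continuous. Since $\tilde u$ is non-constant by $(H_1)$ and Proposition~\ref{prop2}, at $\theta=\alpha F(\tilde u_{\min})$ we have $r\ge 0$ with $r\not\equiv 0$, hence $\lambda_1>\lambda_1(\mu,0)=0$; at $\theta=\alpha F(\tilde u_{\max})$ we have $r\le 0$ with $r\not\equiv 0$, hence $\lambda_1<0$. IVT yields a unique $\tilde\theta$ in the open interval, and Lemma~\ref{lem04*} then gives the stability statement.

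For part~(i), fix $k$ and $\theta\in(\alpha F(\tilde u_{\min}),\alpha F(\tilde u_{\max}))$ and apply Lemma~\ref{lem01}(ii) to $\mu\mapsto\lambda_1(\mu,r)$. The $\mu\to 0^+$ limit is $\max_{\bar\Omega}r>0$ because $\alpha F(\tilde u_{\max})-\theta>0$; the $\mu\to\infty$ limit equals $\frac{1}{|\Omega|}\int r\,dx=\frac{\theta_k-\theta}{|\Omega|}\int 1/d(\tilde u;k)\,dx$, whose sign is the sign of $\theta_k-\theta$. This produces both alternatives, with $\mu^*$ supplied by IVT when $\theta>\theta_k$. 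For (ii.1), it suffices to show $\theta_k\ge\theta_0$ for every $k\ge 0$; since $F(\tilde u(\cdot))$ and $1/d(\tilde u(\cdot);k)$ are both non-decreasing composites of the single function $\tilde u(x)$, the Chebyshev/FKG correlation
\[
\iint_{\Omega\times\Omega}\!\bigl(F(\tilde u(x))-F(\tilde u(y))\bigr)\bigl(d(\tilde u(x);k)^{-1}-d(\tilde u(y);k)^{-1}\bigr)\,dx\,dy\ge 0
\]
rearranges to $|\Omega|\int F(\tilde u)/d\,dx\ge\int F(\tilde u)\,dx\int 1/d\,dx$, i.e.\ $\theta_k\ge\theta_0$, and (i) closes the case.

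Part~(ii.2) is the main new step. Writing $g_k:=1/d(\tilde u;k)$ and the tilted probability measure $d\nu_k:=g_k\,dx/\int g_k\,dx$, direct differentiation of $\theta_k=\alpha\langle F(\tilde u)\rangle_{\nu_k}$ yields $\tfrac{d}{dk}\theta_k=\alpha\,\mathrm{Cov}_{\nu_k}(F(\tilde u),h(\tilde u))$ with $h(u)=u$ for $d=e^{-ku}$ and $h(u)=\log(1+u)$ for $d=(1+u)^{-k}$; the same co-monotonicity argument forces the covariance to be positive, so $k\mapsto\theta_k$ is strictly increasing. Combined with $\theta_k\to\alpha F(\tilde u_{\max})$ as $k\to\infty$ (the measures $\nu_k$ concentrate on $\{\tilde u=\tilde u_{\max}\}$) and $\theta_0<\alpha F(\tilde u_{\max})$, IVT produces the desired $k^*$; \eqref{2022062802} is just the sign of $\theta_k-\theta$, and instability for all $\mu>0$ when $k\ge k^*$ follows from~(i). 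The main obstacle I anticipate is the uniform-in-$k$ smallness of $\mu$ in the moreover claim, since the $\mu^*(k)$ supplied by~(i) a priori depends on $k$ and one needs a single $\tilde\mu$ valid for all $k\in[0,k^*)$. The clean way out is the variational principle \eqref{2022061703}: fix a maximizer $x_0$ of $\tilde u$ and a neighborhood $U\ni x_0$ on which $\alpha F(\tilde u)-\theta\ge c_0:=\tfrac12(\alpha F(\tilde u_{\max})-\theta)>0$, choose once and for all a nonzero $\phi_0\in H^1(\Omega)$ supported in $U$, and use $1/d(\tilde u;k)\ge 1$ (valid on $\{u\ge 0\}$ for both dispersal forms) to get $r\ge c_0$ on $\mathrm{supp}\,\phi_0$. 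Then
\[
\lambda_1(\mu,r)\ \ge\ c_0-\mu C_0,\qquad C_0:=\frac{\int_\Omega|\nabla\phi_0|^2\,dx}{\int_\Omega\phi_0^2\,dx},
\]
with $C_0$ independent of $k$, so $\tilde\mu:=c_0/C_0$ works uniformly in $k\in[0,k^*)$ and finishes the proof.
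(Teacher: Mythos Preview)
Your proof is correct and agrees with the paper on the main dichotomy and on part~(i), but diverges from the paper's argument in parts~(ii.1) and~(ii.2). For (ii.1) the paper proves a separate ``Claim~1'' that $k\mapsto\int_\Omega(\alpha F(\tilde u)-\rho)e^{k\tilde u}\,dx$ is increasing whenever $\rho\le\theta_0$, by splitting the domain at the level set $\{\tilde u=F^{-1}(\rho/\alpha)\}$; you instead prove $\theta_k\ge\theta_0$ in one line via the Chebyshev/FKG correlation inequality and then invoke~(i). For (ii.2) the paper does \emph{not} show that $k\mapsto\theta_k$ is monotone (that is postponed to its Lemma~\ref{lem01*}); rather, it proves uniqueness of $k^*$ by a local ``Claim~2'' showing $\mathcal F'(k_0)>0$ at every root of $\mathcal F(k):=\int_\Omega(\alpha F(\tilde u)-\theta)/d(\tilde u;k)\,dx$. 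Your covariance computation $\tfrac{d}{dk}\theta_k=\alpha\,\mathrm{Cov}_{\nu_k}(F(\tilde u),h(\tilde u))>0$ globalizes this and makes \eqref{2022062802} immediate. Finally, for the ``moreover'' claim the paper simply sets $\tilde\mu:=\inf_{k\in[0,k^*)}\mu^*(k)$ and asserts it is positive (relying implicitly on continuity of $\mu^*(\cdot)$ and the later Lemma~\ref{lem05*} for the endpoint behavior), whereas your variational lower bound with a fixed test function supported near $\arg\max\tilde u$, together with $1/d(\tilde u;k)\ge 1$, gives an explicit $k$-independent $\tilde\mu=c_0/C_0$ and is self-contained. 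Overall your route is somewhat more streamlined; the paper's level-set splitting, on the other hand, does not require the probabilistic reformulation and stays closer to elementary calculus.
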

\begin{proof}
From Lemma \ref{lem01} (iii) and $d(\tilde{u})>0$ in $\Omega$, it follows that $\lambda_1\left(\mu,\frac{\alpha F(\tilde{u})-\theta}{d(\tilde{u})}\right)$ is strictly decreasing with respect to $\theta$. Therefore, it suffices to consider the values of $\lambda_1\Big(\mu,\frac{\alpha F(\tilde{u})}{d(\tilde{u})}\Big)$ and $\lim\limits_{\theta\to+\infty}\lambda_1\left(\mu,\frac{\alpha F(\tilde{u})-\theta}{d(\tilde{u})}\right)$. Based on the variational formula \eqref{2022061703}, one has
$$
\lim\limits_{\theta\to+\infty}\lambda_1\left(\mu,\frac{\alpha F(\tilde{u})-\theta}{d(\tilde{u})}\right)=-\infty,
$$
and
\begin{equation}\nonumber
\aligned
\lambda_1\left(\mu,\frac{\alpha F(\tilde{u})}{d(\tilde{u})}\right)&=\sup\limits_{0\neq\phi\in H^1(\Omega)}\frac{\int_\Omega\left(-\mu|\nabla \phi|^2+\frac{\alpha F(\tilde{u})}{d(\tilde{u})}\phi^2\right)\mathrm{d}x}{\int_\Omega\phi^2\mathrm{d} x}
\geq \frac{\int_{\Omega}\frac{\alpha F(\tilde{u})}{d(\tilde{u})}\mathrm{d}x}{|\Omega|}
>0,
\endaligned
\end{equation}
which suggests that there exists some $\tilde{\theta}\in(0,+\infty)$ such that $\lambda_1\left(\mu,\frac{\alpha F(\tilde{u})-\tilde{\theta}}{d(\tilde{u})}\right)=0$. To prove that $\tilde{\theta}\in(\alpha F(\tilde{u}_{\min}),\alpha F(\tilde{u}_{\max}))$, it suffices to show that
\begin{equation}\label{2022062104}
\lambda_1\left(\mu,\frac{\alpha F(\tilde{u})-\alpha F(\tilde{u}_{\min})}{d(\tilde{u})}\right)>0\quad\mbox{and}\quad\lambda_1\left(\mu,\frac{\alpha F(\tilde{u})-\alpha F(\tilde{u}_{\max})}{d(\tilde{u})}\right)<0.
\end{equation}
Recalling the assumption $(H_2)$ and the fact $\tilde{u}$ is not a constant function in $\Omega$, one obtains
$$
\alpha F(\tilde{u})-\alpha F(\tilde{u}_{\min})\geq,\not\equiv0\quad\mbox{and}\quad\alpha F(\tilde{u})-\alpha F(\tilde{u}_{\max})\leq,\not\equiv0,
$$
which combined with Lemma \ref{lem01} (iii) implies that \eqref{2022062104} holds. Therefore, \eqref{2022062102} and \eqref{2022062103} hold.

Next, we consider the case $\theta\in(\alpha F(\tilde{u}_{\min}),\alpha F(\tilde{u}_{\max}))$.
Since the proofs are similar, we only consider the case $d(u)=e^{-ku}$.
By  Lemma \ref{lem01} (ii), one obtains
\begin{eqnarray}\label{0}
\lim\limits_{{\mu} \to0}\lambda_1({\mu} ,(\alpha F(\tilde{u})-\theta)e^{k\tilde{u}})=\max\limits_{x\in\bar{\Omega}}(\alpha F(\tilde{u})-\theta)e^{k\tilde{u}})>0.
\end{eqnarray}
To proceed, we recall the notation  $\theta_k=\frac{\int_\Omega\alpha F(\tilde{u})e^{k\tilde{u}}\mathrm{d}x}{\int_\Omega e^{k\tilde{u}}\mathrm{d}x}$. Then clearly $\alpha F(\tilde{u}_{\min})<\theta_k < \alpha F(\tilde{u}_{\max})$ for any $k\geq 0$. Using \eqref{2022082901}, one has
\begin{eqnarray}\label{infty}
\lim\limits_{{\mu} \to+\infty}\lambda_1({\mu} ,(\alpha F(\tilde{u})-\theta)e^{k\tilde{u}})=
\frac{\int_\Omega(\alpha F(\tilde{u})-\theta)e^{k\tilde{u}}\mathrm{d}x}{|\Omega|}     \begin{cases}
     \geq 0,&\mbox{if}\;\theta\leq\theta_k,\\
     <0,&\mbox{if}\;\theta>\theta_k.
     \end{cases}
\end{eqnarray}
From Lemma \ref{lem01} (i)-(ii), it follows that the principal eigenvalue $\lambda_1\left({\mu} ,(\alpha F(\tilde{u})-\theta)e^{k\tilde{u}}\right)$ smoothly depends on ${\mu} $ and is strictly decreasing with respect to ${\mu} \in(0,+\infty)$.
Therefore from \eqref{0}-\eqref{infty}, when $\theta\leq\theta_k$, we have $\lambda_1\left(\mu,(\alpha F(\tilde{u})-\theta)e^{k\tilde{u}}\right)>0$ for any ${\mu} >0$, which indicates that $(\tilde{u},0)$ is linearly unstable from Lemma \ref{lem04*}. As $\theta>\theta_k$, from \eqref{0}-\eqref{infty}, we find a constant $\mu^*>0$ such that $\lambda_1\left(\mu^*,\frac{\alpha F(\tilde{u})-\theta}{d(\tilde{u};k)}\right)=0$ and $\lambda_1\left(\mu,(\alpha F(\tilde{u})-\theta)e^{k\tilde{u}}\right)>0$ if $0<\mu<\mu^*$ while $\lambda_1\left(\mu,(\alpha F(\tilde{u})-\theta)e^{k\tilde{u}}\right)<0$ if $\mu>\mu^*$. This completes the proof of  statement (i) by using Lemma \ref{lem04*}.

Next we  prove the results in statement (ii). To this end,  we first prove a claim as below.

{\bf Claim 1}:  $\int_\Omega(\alpha F(\tilde{u})-\rho)e^{k\tilde{u}}\mathrm{d}x$ is strictly increasing with respect to $k\in[0,+\infty)$ provided $\rho\leq\theta_0=: \frac{\int_\Omega\alpha F(\tilde{u})dx}{|\Omega|}$. To prove this,
for any $\rho\leq\rho_0$, we define $\mathfrak{f}(k)=\int_\Omega(\alpha F(\tilde{u})-\rho)e^{k\tilde{u}}\mathrm{d}x$. Direct computations show that
\begin{equation}\label{2022062801}
\aligned
\mathfrak{f}'(k)&=\int_\Omega\tilde{u}(\alpha F(\tilde{u})-\rho)e^{k\tilde{u}}\mathrm{d}x\\
&=\int_{\{x\in\Omega|\tilde{u}(x)\geq F^{-1}\left(\frac{\rho}{\alpha}\right)\}}\tilde{u}(\alpha F(\tilde{u})-\rho)e^{k\tilde{u}}\mathrm{d}x+\int_{\{x\in\Omega|\tilde{u}(x)<F^{-1}\left(\frac{\rho}{\alpha}\right)\}}\tilde{u}(\alpha F(\tilde{u})-\rho)e^{k\tilde{u}}\mathrm{d}x\\
&>\int_{\{x\in\Omega|\tilde{u}(x)\geq F^{-1}\left(\frac{\rho}{\alpha}\right)\}}F^{-1}\left(\frac{\rho}{\alpha}\right)(\alpha F(\tilde{u})-\rho)e^{kF^{-1}\left(\frac{\rho}{\alpha}\right)}\mathrm{d}x\\
&\quad\quad+\int_{\{x\in\Omega|\tilde{u}(x)<F^{-1}\left(\frac{\rho}{\alpha}\right)\}}F^{-1}\left(\frac{\rho}{\alpha}\right)(\alpha F(\tilde{u})-\rho)e^{kF^{-1}\left(\frac{\rho}{\alpha}\right)}\mathrm{d}x\\
&=\int_{\Omega}F^{-1}\left(\frac{\rho}{\alpha}\right)(\alpha F(\tilde{u})-\rho)e^{kF^{-1}\left(\frac{\rho}{\alpha}\right)}\mathrm{d}x\geq0,
\endaligned
\end{equation}
where we have used  the assumption $(H_2)$. Therefore,  Claim 1 holds.

If $\theta\in\left(\alpha F(\tilde{u}_{\min}),\theta_0\right]$, using claim 1, then we have
$$
\aligned
\int_\Omega(\alpha F(\tilde{u})-\theta)e^{k\tilde{u}}\mathrm{d}x&\geq \int_\Omega\left(\alpha F(\tilde{u})-\theta_0\right)e^{k\tilde{u}}\mathrm{d}x
\geq\int_\Omega\left(\alpha F(\tilde{u})-\theta_0\right)\mathrm{d}x=0,
\endaligned
$$
which together with  Lemma \ref{lem01} (ii) and Lemma \ref{lem04*} implies that $(\tilde{u},0)$ is linearly unstable for any ${\mu} >0$ and $k\geq0$. This proves the results in (ii.1).

If $\theta\in\left(\theta_0,\alpha F(\tilde{u}_{\max})\right)$, we define $\mathcal{F}(k):=\int_\Omega(\alpha F(\tilde{u})-\theta)e^{k\tilde{u}}\mathrm{d}x$ as a function of the parameter $k$. Similar to the arguments as those in \cite[Lemma 2.6]{TangWang}, one can derive that
\begin{equation}\label{2022062602}
\lim\limits_{k\to+\infty}\frac{\int_{\Omega}e^{k\tilde{u}}F(\tilde{u})\mathrm{d}x}{\int_\Omega e^{k\tilde{u}}\mathrm{d}x}=\lim\limits_{k\to+\infty}\frac{\int_{\Omega}e^{k(\tilde{u}-\tilde{u}_{\max})}F(\tilde{u})\mathrm{d}x}{\int_\Omega e^{k(\tilde{u}-\tilde{u}_{\max})}\mathrm{d}x}=F(\tilde{u}_{\max}),
\end{equation}
which along with the assumption $\theta\in\left(\theta_0,\alpha F(\tilde{u}_{\max})\right)$ gives that
$$
\mathcal{F}(0)<0\quad\mbox{and}\quad\mathcal{F}(\infty)>0.
$$
This together with the continuity of $\mathcal{F}(\cdot)$ yields a constant  $k^*>0$ such that $\mathcal{F}(k^*)=\int_\Omega(\alpha F(\tilde{u})-\theta)e^{k^*\tilde{u}}\mathrm{d}x=0$. Next we prove that the positive root of
$\mathcal{F}(k)=0$ is unique, for which it suffices to show  the following claim.

{\bf Claim 2}: For any $k_0>0$ satisfying $\mathcal{F}(k_0)=0$, then $\mathcal{F}'(k_0)>0$. Indeed similar to \eqref{2022062801}, one can deduce that
$$
\aligned
\mathcal{F}'({k_0})&=\int_\Omega\tilde{u}(\alpha F(\tilde{u})-\theta)e^{{k_0}\tilde{u}}\mathrm{d}x\\
&=\int_{\{x\in\Omega|\tilde{u}(x)\geq F^{-1}(\theta/\alpha)\}}\tilde{u}(\alpha F(\tilde{u})-\theta)e^{{k_0}\tilde{u}}\mathrm{d}x+\int_{\{x\in\Omega|\tilde{u}(x)<F^{-1}(\theta/\alpha)\}}\tilde{u}(\alpha F(\tilde{u})-\theta)e^{{k_0}\tilde{u}}\mathrm{d}x\\
&>\int_{\{x\in\Omega|\tilde{u}(x)\geq F^{-1}(\theta/\alpha)\}}F^{-1}(\theta/\alpha)(\alpha F(\tilde{u})-\theta)e^{{k_0}\tilde{u}}\mathrm{d}x\\
&\quad\quad+\int_{\{x\in\Omega|\tilde{u}(x)<F^{-1}(\theta/\alpha)\}}F^{-1}(\theta/\alpha)(\alpha F(\tilde{u})-\theta)e^{{k_0}\tilde{u}}\mathrm{d}x\\
&=\int_{\Omega}F^{-1}(\theta/\alpha)(\alpha F(\tilde{u})-\theta)e^{{k_0}\tilde{u}}\mathrm{d}x\\
&=F^{-1}(\theta/\alpha)\mathcal{F}(k_0)=0.
\endaligned
$$
Therefore, $k^*$ is the unique positive root of $\mathcal{F}(k)=0$ and hence \eqref{2022062802} holds. Combining the facts in \eqref{2022062802}, Lemma \ref{lem01} and Lemma \ref{lem04*}, one concludes that $(\tilde{u},0)$ is linearly unstable for any ${\mu} >0$ and $k\geq k^*$. This shows the first part (ii.1) of assertion(ii). From the results in statement (i), one has that
$$
\tilde{\mu}=\inf\limits_{k\in[0,k^*)}\mu^*(k)>0,
$$
which directly implies the second part (ii.2) of statement (ii) by the same argument as in the proof of statement (i). This complete the proof.
\end{proof}

\begin{remark}
Fixing all the parameters except $\mu$ and $\theta$, for any $\theta\in\left(\theta_k,\alpha F(\tilde{u}_{\max})\right)$, Lemma \ref{lem04} (i) ensures a number $\mu^*(\theta)>0$ such that $\lambda_1\left(\mu^*(\theta),\frac{\alpha F(\tilde{u})-\theta}{d(\tilde{u};k)}\right)=0$. We will show that $\mu^*(\theta)$ is a convex function with respect to $\theta\in\left(\theta_k,\alpha F(\tilde{u}_{\max})\right)$ by proving that
\begin{equation}\label{2022091101}
\rho\mu^*(\theta_1)+(1-\rho)\mu^*(\theta_2)>\mu^*(\rho\theta_1+(1-\rho)\theta_2),
\end{equation}
for any $\rho\in(0,1)$ and $\theta_i\in\left(\theta_k,\alpha F(\tilde{u}_{\max})\right)$ $(i=1,2)$. From \eqref{2022061703}, $\lambda_1\left(\mu^*(\theta_1),\frac{\alpha F(\tilde{u})-\theta_1}{d(\tilde{u};k)}\right)=\lambda_1\left(\mu^*(\theta_2),\frac{\alpha F(\tilde{u})-\theta_2}{d(\tilde{u};k)}\right)=0$, and $\phi_1\left(\mu^*(\theta_1),\frac{\alpha F(\tilde{u})-\theta_1}{d(\tilde{u};k)}\right)\neq\phi_1\left(\mu^*(\theta_2),\frac{\alpha F(\tilde{u})-\theta_2}{d(\tilde{u};k)}\right)$, it follows that
$$
\aligned
0&=\lambda_1\left(\mu^*(\rho\theta_1+(1-\rho)\theta_2),\frac{\alpha F(\tilde{u})-(\rho\theta_1+(1-\rho)\theta_2)}{d(\tilde{u};k)}\right)\\
&=\sup\limits_{0\neq\phi\in H^1(\Omega)}\frac{\int_\Omega\left(-\mu^*(\rho\theta_1+(1-\rho)\theta_2)|\nabla\phi|^2+\frac{\alpha F(\tilde{u})-(\rho\theta_1+(1-\rho)\theta_2)}{d(\tilde{u};k)}\phi^2\right)\mathrm{d}x}{\int_\Omega\phi^2\mathrm{d}x}\\
&< \sup\limits_{0\neq\phi\in H^1(\Omega)} \frac{\int_\Omega\rho\left(-\mu^*(\theta_1)|\nabla\phi|^2+\frac{\alpha F(\tilde{u})-\theta_1}{d(\tilde{u};k)}\right)\mathrm{d}x}{\int_\Omega\phi^2\mathrm{d}x}\\
&\ \ \ +\sup\limits_{0\neq\phi\in H^1(\Omega)} \frac{\int_\Omega(1-\rho)\left(-\mu^*(\theta_2)|\nabla\phi|^2+\frac{\alpha F(\tilde{u})-\theta_2}{d(\tilde{u};k)}\right)\mathrm{d}x}{\int_\Omega\phi^2\mathrm{d}x}\\
&\ \ \ +\sup\limits_{0\neq\phi\in H^1(\Omega)}\frac{\int_\Omega(\rho\mu^*(\theta_1)+(1-\rho)\mu^*(\theta_2)-\mu^*(\rho\theta_1+(1-\rho)\theta_2))|\nabla \phi|^2\mathrm{d}x}{\int_\Omega\phi^2\mathrm{d}x}\\
&= (\rho\mu^*(\theta_1)+(1-\rho)\mu^*(\theta_2)-\mu^*(\rho\theta_1+(1-\rho)\theta_2))\sup\limits_{0\neq\phi\in H^1(\Omega)}\frac{\int_\Omega|\nabla \phi|^2\mathrm{d}x}{\int_\Omega\phi^2\mathrm{d}x},
\endaligned
$$
which implies that \eqref{2022091101} holds.
\end{remark}
To proceed, we present a generalized result of \cite[Lemma 26]{DeAngelis} below, which can be proved readily by the mathematical induction.
\begin{proposition}\label{prop01*}
Suppose there are three sequences of nonnegative real numbers such that
$0\leq a_1\leq a_2\leq \cdots\leq a_n$, $0\leq b_1\leq b_2\leq \cdots\leq b_n$ and
$0\leq c_1\leq c_2\leq \cdots\leq c_n$. Then
\begin{equation}\label{2022091102}
\left(\sum_{j=1}^{n}a_jb_jc_j\right)\left(\sum_{j=1}^{n}c_j\right)\geq \left(\sum_{j=1}^{n}a_jc_j\right)\left(\sum_{j=1}^{n}b_jc_j\right)
\end{equation}
where ``='' holds if and only if $a_1=a_n$ or $b_1=b_n$ or $c_1=c_{n-1}=0$.
\end{proposition}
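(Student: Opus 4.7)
The inequality \eqref{2022091102} is a weighted Chebyshev sum inequality, with $\{c_j\}$ playing the role of nonnegative weights. The plan is to proceed by induction on $n$ as the authors suggest, but to reduce the inductive step to a single algebraic identity that exposes the monotonicity of $(a_j)$ and $(b_j)$ directly; the equality analysis will then be read off from this identity together with the inductive hypothesis.

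For the base case $n=1$ both sides equal $a_1b_1c_1^2$, so equality trivially holds. For the induction step from $n$ to $n+1$, I would set
\[
A_n=\sum_{j=1}^{n}a_jc_j,\quad B_n=\sum_{j=1}^{n}b_jc_j,\quad C_n=\sum_{j=1}^{n}c_j,\quad D_n=\sum_{j=1}^{n}a_jb_jc_j,
\]
expand the difference of the two sides of \eqref{2022091102} at level $n+1$, and reorganize the cross terms to obtain
\[
(D_{n+1})(C_{n+1})-(A_{n+1})(B_{n+1})=\bigl(D_nC_n-A_nB_n\bigr)+c_{n+1}\sum_{j=1}^{n}c_j(a_j-a_{n+1})(b_j-b_{n+1}).
\]
The first bracket is $\geq 0$ by the induction hypothesis; the second bracket is a sum of nonnegative terms, since $a_j\leq a_{n+1}$ and $b_j\leq b_{n+1}$ for every $j\leq n$ by monotonicity and $c_j,c_{n+1}\geq 0$. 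This gives the inequality at level $n+1$. (Equivalently, one could avoid induction altogether by noting the classical symmetrization identity $2(D_nC_n-A_nB_n)=\sum_{i,j}c_ic_j(a_i-a_j)(b_i-b_j)$, but the inductive bookkeeping above will be useful for tracking the equality case.)

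The delicate part, and the step I expect to be the main obstacle, is the sharp characterization of equality as $a_1=a_n$ or $b_1=b_n$ or $c_1=c_{n-1}=0$. Sufficiency is immediate: if $a_1=a_n$ then $a_j-a_{n+1}=0$ for all $j$ in the inductive identity (and analogously for $b$), while $c_1=c_{n-1}=0$ combined with monotonicity forces $c_j=0$ for $j\leq n-1$, so every summand on both sides collapses to the single index $j=n$. Necessity is subtler: starting from an equality $n$-tuple, the induction step forces both $D_nC_n=A_nB_n$ and $c_{n+1}\sum_{j\leq n}c_j(a_j-a_{n+1})(b_j-b_{n+1})=0$, and one must show that any violation of the three alternatives propagates a strictly positive summand somewhere. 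Concretely, if $a_1<a_n$ and $b_1<b_n$, then by considering the extremal pair $(1,n)$ in the symmetrization identity one gets $c_1c_n=0$; a further case analysis, using that once $c_1=0$ the index $n-1$ either has $c_{n-1}=0$ or sits in a nontrivial pair with some earlier $a$-jump, yields $c_{n-1}=0$. I would handle this by introducing $p=\min\{j:a_j=a_n\}$ and $q=\min\{j:b_j=b_n\}$ and running the identity on the pair $(\min(p,q)-1,\,n)$, which produces the required zero weights. Once this case analysis is settled the proposition is complete.
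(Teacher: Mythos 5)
For the inequality itself your argument is correct and is, in substance, exactly the paper's proof: the longhand computation in the paper's induction step amounts precisely to the identity
\[
D_{n+1}C_{n+1}-A_{n+1}B_{n+1}=\bigl(D_nC_n-A_nB_n\bigr)+c_{n+1}\sum_{j=1}^{n}c_j(a_j-a_{n+1})(b_j-b_{n+1}),
\]
which you write down, with both terms on the right nonnegative by the induction hypothesis and monotonicity. The symmetrization identity you mention is a legitimate non-inductive alternative. So for \eqref{2022091102} proper, and for the sufficiency of the three equality conditions, there is nothing to object to.

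The genuine gap is in the necessity half of the equality case, and it is not one you can close with a cleverer case analysis: the ``only if'' direction of the stated characterization is false. Take $n=3$, $a=(0,1,1)$, $b=(0,0,1)$, $c=(0,1,1)$. Then $\sum a_jb_jc_j=1$, $\sum c_j=2$, $\sum a_jc_j=2$, $\sum b_jc_j=1$, so both sides of \eqref{2022091102} equal $2$; yet $a_1<a_3$, $b_1<b_3$ and $c_{n-1}=c_2=1\neq 0$. What is actually true (read off from $2(D_nC_n-A_nB_n)=\sum_{i,j}c_ic_j(a_i-a_j)(b_i-b_j)$) is that equality holds iff $c\equiv 0$ or one of $a$, $b$ is constant on the support $\{j:c_j>0\}$ of $c$; the proposition's three alternatives only cover the cases where that support is all of $\{1,\dots,n\}$ or the singleton $\{n\}$. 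Your specific plan --- deduce $c_1c_n=0$ from the pair $(1,n)$ and then propagate to $c_{n-1}=0$ --- stalls exactly at the example above, where $c_1=0$ kills the pair $(1,n)$ and every remaining pair is annihilated by $a_2=a_3$ rather than by a vanishing weight. Note also that induction is ill-suited to tracking equality here, since the witness can change from level to level: in the example, equality at level $2$ holds because $b_1=b_2$, while at level $3$ it holds for a different reason. (To be fair, the paper's own one-line justification of the equality case has the same defect. The only place strictness is invoked later, in Lemma \ref{lem01*}, has all weights $c_j>0$, in which case the single pair $(1,n)$ does force strict inequality, so the downstream applications are unaffected.)
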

\begin{proof}
We use induction. If $n = 1$, then \eqref{2022091102} holds. Now we assume that \eqref{2022091102} holds when $n=i$, we need to show that it holds for $n=i+1$. Direct computations give
$$
\aligned
&(a_1b_1c_1+a_2b_2c_2+\cdots+a_ib_ic_i+a_{i+1}b_{i+1}c_{i+1})(c_1+c_2+\cdots+c_i+c_{i+1})\\
&\textstyle \quad=\left(\sum_{j=1}^{i}a_jb_jc_j\right)\left(\sum_{j=1}^{i}c_j\right)+a_{i+1}b_{i+1}c_{i+1}^2+c_{i+1}\left(\sum_{j=1}^{i}a_jb_jc_j\right)+a_{i+1}b_{i+1}c_{i+1}\left(\sum_{j=1}^{i}c_j\right)\\
&\textstyle \quad\geq\left(\sum_{j=1}^{i}a_jc_j\right)\left(\sum_{j=1}^{i}b_jc_j\right)+a_{i+1}b_{i+1}c_{i+1}^2+\left(\sum_{j=1}^{i}c_jc_{i+1}(a_jb_j+a_{i+1}b_{i+1})\right)\\
&\textstyle\quad=\left(\sum_{j=1}^{i}a_jc_j\right)\left(\sum_{j=1}^{i}b_jc_j\right)+a_{i+1}b_{i+1}c_{i+1}^2+\left(\sum_{j=1}^{i}c_jc_{i+1}(a_jb_{i+1}+b_ja_{i+1})\right)\\
&\textstyle\quad\quad+\left(\sum_{j=1}^{i}c_jc_{i+1}(a_jb_j+a_{i+1}b_{i+1}-a_jb_{i+1}-b_ja_{i+1})\right)\\
&\textstyle\quad=\left(\sum_{j=1}^{i}a_jc_j\right)\left(\sum_{j=1}^{i}b_jc_j\right)+a_{i+1}b_{i+1}c_{i+1}^2+b_{i+1}c_{i+1}\left(\sum_{j=1}^{i}a_jc_j\right)+a_{i+1}c_{i+1}\left(\sum_{j=1}^{i}b_jc_j\right)\\
&\textstyle\quad\quad+\left(\sum_{j=1}^{i}c_jc_{i+1}(b_j-b_{i+1})(a_j-a_{i+1})\right)\\
&\textstyle\quad\geq\left(\sum_{j=1}^{i+1}a_jc_j\right)\left(\sum_{j=1}^{i+1}b_jc_j\right).
\endaligned
$$
where the ``='' in the last inequality holds if and only if $a_j=a_{i+1}$ or $b_j=b_{i+1}$ or $c_j=0$ for all $j=1,2,\cdots, i$. This along with the fact that $a_j, b_i, c_j$ are non-decreasing with respect to $j$ completes the proof.
\end{proof}
We remark that the results in Proposition \ref{prop01*} can be considered as a generalization of \cite[Lemma 26]{DeAngelis} where $c_i=1$ $(i=1,2,\cdots,n)$.

\begin{lemma}\label{lem01*}
Let $d(u)=:d(u;k)$ be given in \eqref{du}. Fix all the parameters except $k$ and define $
\tilde{F}(k):=\frac{\int_\Omega\frac{ F(\tilde{u})}{d(\tilde{u};k)}\mathrm{d}x}{\int_\Omega\frac{1}{d(\tilde{u};k)}\mathrm{d}x}$. Then $\tilde{F}(k)$ is strictly increasing with respect to $k\in[0,+\infty)$.
\end{lemma}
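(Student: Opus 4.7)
The natural approach is to show $\tilde F'(k)>0$ directly by differentiating under the integral sign and reducing the sign of the derivative to a weighted Chebyshev-type inequality, for which Proposition~\ref{prop01*} is the discrete prototype. Write $h(x;k):=1/d(\tilde u(x);k)$, so that
$$
\tilde F(k)=\frac{N(k)}{D(k)},\qquad N(k)=\int_\Omega F(\tilde u)\,h(\,\cdot\,;k)\,\mathrm{d}x,\qquad D(k)=\int_\Omega h(\,\cdot\,;k)\,\mathrm{d}x.
$$
For both admissible choices of $d$ one has $\partial_k h=g(\tilde u)\,h$, where $g(u)=u$ (when $d=e^{-ku}$) or $g(u)=\ln(1+u)$ (when $d=(1+u)^{-k}$). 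In either case $g$ is strictly increasing on $[0,\infty)$.

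Differentiating,
$$
D(k)^2\,\tilde F'(k)=\Bigl(\int_\Omega F(\tilde u)\,g(\tilde u)\,h\,\mathrm{d}x\Bigr)\Bigl(\int_\Omega h\,\mathrm{d}x\Bigr)-\Bigl(\int_\Omega F(\tilde u)\,h\,\mathrm{d}x\Bigr)\Bigl(\int_\Omega g(\tilde u)\,h\,\mathrm{d}x\Bigr),
$$
so the task reduces to proving that this right-hand side is strictly positive. This is exactly the continuous analogue of \eqref{2022091102} with the roles $a\leftrightarrow F(\tilde u)$, $b\leftrightarrow g(\tilde u)$, $c\leftrightarrow h$. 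I would justify it by the symmetrization identity
$$
2\bigl[N'D-ND'\bigr]=\int_\Omega\!\!\int_\Omega\bigl[F(\tilde u(x))-F(\tilde u(y))\bigr]\bigl[g(\tilde u(x))-g(\tilde u(y))\bigr]\,h(x;k)h(y;k)\,\mathrm{d}x\,\mathrm{d}y,
$$
which follows by expanding the integrand and using symmetry in $x,y$. Since $F$ and $g$ are both strictly increasing, the two bracketed factors have the same sign for every $(x,y)$, so the integrand is pointwise nonnegative, and $h>0$.

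The only remaining point — and the main thing to check carefully — is the \emph{strict} inequality. The integrand vanishes only on the set $\{(x,y):\tilde u(x)=\tilde u(y)\}$, so strict positivity follows once $\tilde u$ is not a constant. This in turn is a consequence of $(H_1)$ and \eqref{2022062101}: if $\tilde u\equiv c$ on $\Omega$ then the equation forces $c(m(x)-c)=0$ for every $x$, which contradicts the assumption that $m$ is not constant. (Alternatively, one may partition $\Omega$, apply Proposition~\ref{prop01*} to Riemann sums built from $F(\tilde u), g(\tilde u), h$ evaluated on the partition, and pass to the limit; the non-constancy of $\tilde u$ rules out the equality clause of Proposition~\ref{prop01*} in the limit.) Therefore $\tilde F'(k)>0$ for every $k\in[0,\infty)$, which gives the claim. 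The entire argument is short once the monotonicity of $g$ and the non-constancy of $\tilde u$ are in place; the latter is the step that genuinely uses the structural hypotheses on $m$.
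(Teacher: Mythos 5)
Your proposal is correct, and it proves the same underlying fact --- a weighted Chebyshev (positive correlation) inequality for the two increasing functions $F(\tilde u)$ and $g(\tilde u)$ against the weight $h=1/d(\tilde u;k)$ --- but by a cleaner route than the paper. The paper discretizes: it approximates $N'D-ND'$ by Riemann sums, rearranges the sample points so that $\tilde u(x_i)$ is ascending, and invokes the discrete three-sequence inequality of Proposition~\ref{prop01*}, attributing strictness to the non-constancy of $\tilde u$. Your continuous symmetrization identity
\[
2\bigl[N'D-ND'\bigr]=\int_\Omega\!\!\int_\Omega\bigl[F(\tilde u(x))-F(\tilde u(y))\bigr]\bigl[g(\tilde u(x))-g(\tilde u(y))\bigr]h(x;k)h(y;k)\,\mathrm{d}x\,\mathrm{d}y
\]
reaches the same conclusion without any limiting process, and it handles the one genuinely delicate point more transparently: in the paper's argument each finite Riemann sum is strictly positive, but a limit of strictly positive quantities need not be positive, so strictness really has to come from the continuous statement that the integrand above is positive on a set of positive measure in $\Omega\times\Omega$ (which your route gives directly, since $\tilde u$ is continuous and non-constant). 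Your justification of the non-constancy of $\tilde u$ from $(H_1)$ and \eqref{2022062101} is also correct and matches what the paper implicitly uses via Proposition~\ref{prop2}. In short: same key inequality, but your double-integral formulation is the more robust way to certify the strict sign, and it subsumes both choices of $d(u;k)$ in one computation via $\partial_k h=g(\tilde u)h$.
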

\begin{proof}
We first consider the case $d(u;k)=e^{-ku}$ for which one has
$$
\tilde{F}'(k)=\frac{\int_\Omega e^{k\tilde{u}}\mathrm{d}x\int_\Omega F(\tilde{u})\tilde{u}e^{k\tilde{u}}\mathrm{d}x-\int_\Omega\tilde{u}e^{k\tilde{u}}\mathrm{d}x\int_\Omega F(\tilde{u})e^{k\tilde{u}}\mathrm{d}x}{(\int_\Omega e^{k\tilde{u}}\mathrm{d}x)^2}.
$$
Next we shall approximate the integrals by their Riemann sums with
$$
a_i=F(\tilde{u}(x_i)),\;\;b_i=\tilde{u}(x_i),\;\;\mbox{and}\;\;c_i=e^{k\tilde{u}(x_i)},\quad i=1,2,\cdots,n.
$$
Since we can rearrange the terms in the Riemann sums in the order that $b_i$ is ascending
(then $a_i$ and $c_i$ are automatically ascending by the assumption $(H_2)$), by \eqref{2022091102}, one obtains
$$
\left(\frac{1}{n}\sum_{i=1}^{n}F(\tilde{u}(x_i))\tilde{u}(x_i)e^{k\tilde{u}(x_i)}\right)\left(\frac{1}{n}\sum_{i=1}^{n}e^{k\tilde{u}(x_i)}\right)> \left(\frac{1}{n}\sum_{i=1}^{n}F(\tilde{u}(x_i))e^{k\tilde{u}(x_i)}\right)\left(\frac{1}{n}\sum_{i=1}^{n}\tilde{u}(x_i)e^{k\tilde{u}(x_i)}\right)
$$
where the strict inequality results from the fact that $\tilde{u}$ is not a constant function in $\Omega$ (cf. Proposition \ref{prop2}).
Thus, one has
$$
\tilde{F}'(k)>0,\quad\mbox{for any}\;k\geq0.
$$

On the other hand,  if $d(u;k)=(1+u)^{-k}$, then we have
$$
\tilde{F}'(k)=\frac{\int_\Omega (1+\tilde{u})^k\mathrm{d}x\int_\Omega F(\tilde{u})(1+\tilde{u})^k\ln(1+\tilde{u})\mathrm{d}x-\int_\Omega(1+\tilde{u})^k\ln(1+\tilde{u})\mathrm{d}x\int_\Omega F(\tilde{u})(1+\tilde{u})^k\mathrm{d}x}{(\int_\Omega (1+\tilde{u})^k\mathrm{d}x)^2}.
$$
Let
$$
a_i=F(\tilde{u}(x_i)),\;\;b_i=\ln(1+\tilde{u}(x_i)),\;\;\mbox{and}\;\;c_i=(1+\tilde{u}(x_i))^k,\quad i=1,2,\cdots,n.
$$
Similarly, one can show that $\tilde{F}'(k)>0$ for any $k\geq0$, which completes the proof.
\end{proof}

By Lemma \ref{lem04*}, the linear stability of the semi-trivial steady state $(\tilde{u},0)$ is determined by the sign of $\textstyle \lambda_1\left({\mu},\frac{\alpha F(\tilde{u})-\theta}{d(\tilde{u};k)} \right)$. Then, it is natural to study the level set
 $$\mathcal{S}_0:=\left\{({\mu} ,k)|\lambda_1\left({\mu} ,\frac{\alpha F(\tilde{u})-\theta}{d(\tilde{u};k)} \right)=0\right\}.
 $$
Fixing all the parameters except ${\mu} $ and $k$, if $\theta\in\left[\theta_0,\alpha F(\tilde{u}_{\max})\right)$, for any $k\in[0,k^*)$, from Lemma \ref{lem04} (ii), it follows that there exists unique $\mu^*(k)>0$ such that $\textstyle \lambda_1\left(\mu^*(k),\frac{\alpha F(\tilde{u})-\theta}{d(\tilde{u};k)} \right)=0$.  Next, we investigate the property of $\mu^*(k)$ by varying $k$ from $0$ to $k^*$, that is, to characterize the level set $\mathcal{S}_0$.
\begin{lemma}\label{lem05*}
Let $d(u)=d(u;k)$ be given by \eqref{du} and all the parameters except for ${\mu} $ and $k$ fixed. Let $k^*$ and $\tilde{\mu}$ be as defined in Lemma \ref{lem04}-(ii.2).  Assume $\theta\in\left(\theta_0,\alpha F(\tilde{u}_{\max})\right)$. For any $k_0\in[0,k^*)$, we have
\begin{equation}\label{2022062805}
\frac{\partial \lambda_1\left({\mu} ,\frac{\alpha F(\tilde{u})-\theta}{d(\tilde{u};k)}\right)}{\partial k}
=\frac{-\int_\Omega\frac{\alpha F(\tilde{u})-\theta}{d^2(\tilde{u};k)}\cdot\frac{\partial d(\tilde{u};k)}{\partial k}\phi_1^2\mathrm{d}x}{\int_\Omega\phi_1^2\mathrm{d}x},
\end{equation}
where $\phi_1=\phi_1\left({\mu} ,\frac{\alpha F(\tilde{u})-\theta}{d(\tilde{u};k)}\right)$. In particular,
\begin{equation}\label{2022062806}
\frac{\partial \lambda_1\left({\mu} ,(\alpha F(\tilde{u})-\theta)e^{k\tilde{u}}\right)}{\partial k}\Big|_{({\mu} ,k)=(\mu^*(k_0),k_0)}
=\mu^*(k_0)\frac{\int_\Omega\left[\tilde{u}|\nabla\phi|^2+\phi\nabla \phi\cdot\nabla \tilde{u}\right]\mathrm{d}x}{\int_{\Omega}\phi^2\mathrm{d}x},
\end{equation}
where $\phi=\phi_1(\mu^*(k_0),(\alpha F(\tilde{u})-\theta)e^{k_0\tilde{u}})$, and
\begin{equation}\label{2022062807}
\frac{\partial \lambda_1\left({\mu} ,(\alpha F(\tilde{u})-\theta)(1+\tilde{u})^k\right)}{\partial k}\Big|_{({\mu} ,k)=(\mu^*(k_0),k_0)}=\mu^*(k_0)\frac{\int_\Omega\left[\ln(1+\tilde{u})|\nabla\phi|^2+\frac{\phi\nabla \phi\cdot\nabla \tilde{u}}{1+\tilde{u}}\right]\mathrm{d}x}{\int_{\Omega}\phi^2\mathrm{d}x},
\end{equation}
where $\phi=\phi_1(\mu^*(k_0),(\alpha F(\tilde{u})-\theta)(1+\tilde{u})^{k_0})$.
If
\begin{equation}\label{2022062808}
\frac{\partial \lambda_1\left({\mu} ,\frac{\alpha F(\tilde{u})-\theta}{d(\tilde{u};k)}\right)}{\partial k}\Big|_{({\mu} ,k)=(\mu^*(k_0),k_0)}>0\ (\mathrm{resp}. <0),
\end{equation}
then $\frac{\partial \mu^*(k)}{\partial k}|_{k=k_0}>0\ (\mathrm{resp}. <0)$. Moreover, $\lim\limits_{k\to0}\mu^*(k)=\mu^*(0)$, $\lim\limits_{k\nearrow k^*}\mu^*(k)=+\infty$ and $\mu^*(k)\in(\tilde{\mu},+\infty)$ for any $k\in[0,k^*)$.
\end{lemma}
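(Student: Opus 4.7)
The plan is to treat $\lambda_1$ as a smooth function of $(\mu,k)$ via the coefficient $r(x;k)=(\alpha F(\tilde u)-\theta)/d(\tilde u;k)$, differentiate the eigenvalue identity in $k$, and then use the relation $\lambda_1=0$ at $(\mu^*(k_0),k_0)$ together with the implicit function theorem to read off the sign of $(\mu^*)'(k_0)$ and the boundary behavior of $\mu^*$ on $[0,k^*)$.

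First I would derive the general formula \eqref{2022062805}. Differentiating the eigenvalue equation
\[
\mu\Delta\phi_1+r(x;k)\phi_1=\lambda_1\phi_1,\qquad \nabla\phi_1\cdot\mathbf n|_{\partial\Omega}=0,
\]
with respect to $k$, multiplying by $\phi_1$, and integrating over $\Omega$, the self-adjointness of the Neumann Laplacian and the eigenvalue equation itself eliminate the unknown $\partial_k\phi_1$, yielding
\[
\frac{\partial\lambda_1}{\partial k}=\frac{\int_\Omega(\partial_k r)\phi_1^2\,dx}{\int_\Omega\phi_1^2\,dx}=\frac{-\int_\Omega\frac{\alpha F(\tilde u)-\theta}{d^2(\tilde u;k)}\,\partial_k d(\tilde u;k)\,\phi_1^2\,dx}{\int_\Omega\phi_1^2\,dx},
\]
which is exactly \eqref{2022062805}. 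Specializing $d(u;k)=e^{-ku}$ gives $-d^{-2}\partial_k d=\tilde u e^{k\tilde u}$, while $d(u;k)=(1+u)^{-k}$ gives $-d^{-2}\partial_k d=\ln(1+\tilde u)(1+\tilde u)^k$.

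Next I would pass to identities \eqref{2022062806}--\eqref{2022062807}. At $(\mu,k)=(\mu^*(k_0),k_0)$ the eigenvalue equation with $\lambda_1=0$ gives, for $d(u;k)=e^{-ku}$, the substitution
\[
(\alpha F(\tilde u)-\theta)e^{k_0\tilde u}\,\phi=-\mu^*(k_0)\Delta\phi\quad\text{in }\Omega.
\]
Multiplying by $\tilde u\phi$, integrating and using integration by parts with the Neumann boundary condition yields
\[
\int_\Omega(\alpha F(\tilde u)-\theta)\tilde u e^{k_0\tilde u}\phi^2\,dx=-\mu^*(k_0)\int_\Omega\tilde u\phi\Delta\phi\,dx=\mu^*(k_0)\int_\Omega\bigl[\tilde u|\nabla\phi|^2+\phi\nabla\phi\cdot\nabla\tilde u\bigr]\,dx,
\]
which is \eqref{2022062806}. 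The case $d(u;k)=(1+u)^{-k}$ is entirely parallel, replacing the weight $\tilde u$ by $\ln(1+\tilde u)$ and producing the $\phi\nabla\phi\cdot\nabla\tilde u/(1+\tilde u)$ term in \eqref{2022062807}.

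For the sign implication and the boundary behavior, I would invoke the implicit function theorem applied to $\Lambda(\mu,k):=\lambda_1(\mu,r(\cdot;k))$. By Lemma~\ref{lem01}(ii), $\partial_\mu\Lambda<0$ whenever the coefficient is non-constant (which it is here, since $\tilde u$ and hence $r(\cdot;k)$ are non-constant by Proposition~\ref{prop2} and $(H_1)$), so at any $k_0\in[0,k^*)$ with $\Lambda(\mu^*(k_0),k_0)=0$,
\[
(\mu^*)'(k_0)=-\frac{\partial_k\Lambda}{\partial_\mu\Lambda}\bigg|_{(\mu^*(k_0),k_0)}
\]
has the same sign as $\partial_k\Lambda|_{(\mu^*(k_0),k_0)}$, which proves \eqref{2022062808}. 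Continuity of $\mu^*$ at $k=0$ (giving $\lim_{k\to 0}\mu^*(k)=\mu^*(0)$) follows from the smooth dependence of $\lambda_1$ on $(\mu,k)$ and the strict monotonicity in $\mu$, which make $\mu^*$ a well-defined continuous function of $k$ via the implicit function theorem. For $\lim_{k\nearrow k^*}\mu^*(k)=+\infty$, I would use the computation \eqref{infty} together with the characterization \eqref{2022062802}: for $k<k^*$, $\lim_{\mu\to+\infty}\Lambda(\mu,k)=|\Omega|^{-1}\int_\Omega(\alpha F(\tilde u)-\theta)/d(\tilde u;k)\,dx<0$, but this limit tends to $0^-$ as $k\nearrow k^*$ by continuity in $k$; hence the unique $\mu$-crossing of $\Lambda(\cdot,k)$ through zero must recede to $+\infty$. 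The containment $\mu^*(k)\in(\tilde\mu,+\infty)$ then follows from the very definition $\tilde\mu=\inf_{k\in[0,k^*)}\mu^*(k)$ in Lemma~\ref{lem04}(ii.2).

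The routine calculus portions (differentiating the eigenvalue identity, integrating by parts) are standard; the main obstacle is the limit $\mu^*(k)\to+\infty$ as $k\nearrow k^*$, which requires combining the asymptotic \eqref{infty} with the fact that $k^*$ was singled out precisely as the zero of the coefficient's mean value. Once this is in place the remaining assertions are essentially immediate consequences of the implicit function theorem and the strict monotonicity of $\lambda_1$ in $\mu$.
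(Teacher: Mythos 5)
Your proposal is correct and follows essentially the same route as the paper: the Hadamard-type formula \eqref{2022062805} via differentiating the eigenvalue identity and using self-adjointness, the substitution of $\mu^*(k_0)\Delta\phi=-(\alpha F(\tilde u)-\theta)e^{k_0\tilde u}\phi$ plus integration by parts for \eqref{2022062806}--\eqref{2022062807}, and implicit differentiation of $\lambda_1(\mu^*(k),\cdot)=0$ combined with the strict monotonicity of $\lambda_1$ in $\mu$ for the sign implication. You in fact supply slightly more detail than the paper on why $\mu^*(k)\to+\infty$ as $k\nearrow k^*$ (the paper simply refers back to Lemma \ref{lem04}), and your argument there is sound.
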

\begin{proof}
For simplicity, we denote $\lambda_1\left({\mu} ,\frac{\alpha F(\tilde{u})-\theta}{d(\tilde{u};k)}\right)$ and $\phi_1\left({\mu} ,\frac{\alpha F(\tilde{u})-\theta}{d(\tilde{u};k)}\right)$ by $\lambda_1$ and $\phi_1$, respectively. Recall that $\lambda_1$ and $\phi_1$ satisfy
\begin{equation}\label{2022062803}
\begin{cases}
{\mu} \Delta \phi_1+\frac{\alpha F(\tilde{u})-\theta}{d(\tilde{u};k)}\phi_1=\lambda_1\phi_1, &\mbox{in}\;\Omega,\\
\nabla \phi_1\cdot \n=0,&\mbox{on}\;\partial\Omega.
\end{cases}
\end{equation}
Differentiating (\ref{2022062803}) with respect to $k$, we get
\begin{equation}\label{2022062804}
\begin{cases}
 {\mu} \Delta \phi'_1+\frac{\alpha F(\tilde{u})-\theta}{d(\tilde{u};k)}\phi_1'-d'(\tilde{u};k)\frac{\alpha F(\tilde{u})-\theta}{d^2(\tilde{u};k)}\phi_1=\lambda_1 \phi'_1+\lambda_1'\phi_1, & x\in \Omega,\\
 \nabla \phi'_1\cdot \n=0, & x\in \partial\Omega
 \end{cases}
\end{equation}
where we have used $'$ to denote $\frac{\partial}{\partial k}$. Multiplying the first equation of (\ref{2022062803}) by $\phi_1'$, and then integrating the resulting equation on $\Omega$, one obtains
$$
\int_{\Omega}\left( {\mu} \phi'_1\Delta\phi_1 +\frac{\alpha F(\tilde{u})-\theta}{d(\tilde{u};k)}\phi_1\phi_1'\right)\textrm{d}x=\lambda_1\int_{\Omega}\phi_1\phi_1'\mathrm{d}x.
$$
Similarly, multiplying the first equation of (\ref{2022062804}) by $\phi_1$, and  integrating the resulting equation on $\Omega$, we obtain
$$
\int_{\Omega}\left({\mu} \phi_1\Delta\phi'_1 +\frac{(\alpha F(\tilde{u})-\theta)\phi_1\phi_1'}{d(\tilde{u};k)}-\frac{(\alpha F(\tilde{u})-\theta)\phi^2_1d'(\tilde{u};k)}{d^2(\tilde{u};k)}\right)\mathrm{d}x
=\lambda_1\int_{\Omega}\phi_1\phi'_1\mathrm{d}x+\lambda'_1\int_{\Omega}\phi_1^2\mathrm{d}x.
$$
Subtracting the above two equations and applying the integration by parts immediately give \eqref{2022062805}. Since the proofs of \eqref{2022062806} and  \eqref{2022062807} are similar, we only prove \eqref{2022062806}. Recall from Lemma \ref{lem04} that $(\lambda_1(\mu^*(k_0),(\alpha F(\tilde{u})-\theta)e^{k_0\tilde{u}}),\phi_1(\mu^*(k_0),(\alpha F(\tilde{u})-\theta)e^{k_0\tilde{u}}))=:(\lambda_1, \phi_1)=(0,\phi_1)$ satisfies
\begin{equation}\label{2022062803}
\begin{cases}
\mu^*(k_0) \Delta \phi_1+(\alpha F(\tilde{u})-\theta)e^{k_0 \tilde{u}}\phi_1=0, &\mbox{in}\;\Omega,\\
\nabla \phi_1\cdot \n=0,&\mbox{on}\;\partial\Omega.
\end{cases}
\end{equation}
Then it follows from \eqref{2022062805}  that
$$
\aligned
\frac{\partial \lambda_1\left({\mu} ,(\alpha F(\tilde{u})-\theta)e^{k\tilde{u}}\right)}{\partial k}\Big|_{({\mu} ,k)=(\mu^*(k_0),k_0)}
&=\frac{\int_\Omega(\alpha F(\tilde{u})-\theta)\tilde{u}e^{k_0\tilde{u}}\phi_1^2\mathrm{d}x}{\int_\Omega\phi_1^2\mathrm{d}x}\\
&=\frac{-\int_\Omega \mu^*(k_0)\phi_1\tilde{u}\Delta\phi_1\mathrm{d}x}{\int_\Omega\phi_1^2\mathrm{d}x}\\
&=\mu^*(k_0)\frac{\int_\Omega\left[\tilde{u}|\nabla\phi_1|^2+\phi_1\nabla \phi_1\cdot\nabla \tilde{u}\right]\mathrm{d}x}{\int_{\Omega}\phi_1^2\mathrm{d}x}.
\endaligned
$$
This proves \eqref{2022062806}.

As to \eqref{2022062808}, we only consider the case $\frac{\partial \lambda_1\left({\mu} ,\frac{\alpha F(\tilde{u})-\theta}{d(\tilde{u};k)}\right)}{\partial k}\Big|_{({\mu} ,k)=(\mu^*(k_0),k_0)}>0$ and the other case can been treated similarly. For this case, we recall that $\lambda_1(\mu^*(k_0),\frac{\alpha F(\tilde{u})-\theta}{d(k_0;\tilde{u})})=0$, which yields that (differentiate it with respect to $k$)
$$
\frac{\partial \lambda_1\left({\mu} ,\frac{\alpha F(\tilde{u})-\theta}{d(\tilde{u};k)}\right)}{\partial {\mu} }\bigg|_{({\mu} ,k)=(\mu^*(k_0),k_0)}\cdot\frac{\partial \mu^*(k)}{\partial k}\bigg|_{k=k_0}+\frac{\partial \lambda_1\left({\mu} ,\frac{\alpha F(\tilde{u})-\theta}{d(\tilde{u};k)}\right)}{\partial k}\bigg|_{({\mu} ,k)=(\mu^*(k_0),k_0)}=0.
$$
This combined with Lemma \ref{lem01} (ii) gives $\frac{\partial \mu^*(k)}{\partial k}|_{k=k_0}>0$. Finally, the last part of this lemma is derived directly from Lemma \ref{lem04}.
\end{proof}

Lemma \ref{lem05*} tells us that the sign of quantities defined in \eqref{2022062806} or \eqref{2022062807} determines the monotonicity of $\mu^*(k)$ with respect to $k$. In general these quantities may change signs by as $k$ varies from $0$ to $k^*$. In the following Lemma, we shall show that the sign of quantities defined in \eqref{2022062806} or \eqref{2022062807} can be determined if $m(x)$ is monotonic.
\begin{lemma}\label{lem06*}
Assume $\Omega=[0,L]$, $m_x\geq0$ in $(0,L)$ or $m_x\leq0$ in $(0,L)$ and $d(u)=d(u;k)$, where $d(u;k)=e^{-ku}$ or $(1+u)^{-k}$. If $\theta\in[\theta_0,\alpha F(\tilde{u}_{\max}))$, then  $\frac{\partial \mu^*(k)}{\partial k}>0$ for $k\in[0,k^*)$.
\end{lemma}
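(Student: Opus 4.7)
The plan is to combine Lemma~\ref{lem05*} with the one-dimensional fact that monotonicity of $m$ transfers to monotonicity of $\tilde u$. By Lemma~\ref{lem05*} (in particular the implication \eqref{2022062808}), it suffices to show that the right-hand side of \eqref{2022062806} (when $d(u;k)=e^{-ku}$) or of \eqref{2022062807} (when $d(u;k)=(1+u)^{-k}$) is strictly positive at every $k_0\in[0,k^*)$. Both integrands can be written uniformly as $\phi'\,[g(\tilde u)\phi]'$ with $g(u)=u$ in the exponential case and $g(u)=\ln(1+u)$ in the polynomial case, since $[g(\tilde u)\phi]'=g'(\tilde u)\tilde u'\phi+g(\tilde u)\phi'$ and multiplying by $\phi'$ recovers the integrands displayed in \eqref{2022062806}--\eqref{2022062807}.

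An integration by parts (the boundary terms vanish because $\phi'(0)=\phi'(L)=0$) and the eigenvalue identity $\mu^*(k_0)\phi''=-\frac{\alpha F(\tilde u)-\theta}{d(\tilde u;k_0)}\phi$ together give
\[
\int_0^L\phi'\,[g(\tilde u)\phi]'\,\mathrm dx=-\int_0^L\phi''\,g(\tilde u)\,\phi\,\mathrm dx=\frac{1}{\mu^*(k_0)}\,\mathcal I(k_0),\qquad \mathcal I(k_0):=\int_0^L\frac{\alpha F(\tilde u)-\theta}{d(\tilde u;k_0)}\,g(\tilde u)\,\phi^2\,\mathrm dx.
\]
Testing the $\phi$-equation against $\phi$ itself gives the companion identity
\[
\int_0^L\frac{\alpha F(\tilde u)-\theta}{d(\tilde u;k_0)}\phi^2\,\mathrm dx=\mu^*(k_0)\int_0^L(\phi')^2\,\mathrm dx>0,
\]
which is the $g\equiv 1$ version of $\mathcal I(k_0)$. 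The task is therefore to upgrade this unweighted positivity to the $g(\tilde u)$-weighted positivity $\mathcal I(k_0)>0$.

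This is where the monotonicity hypothesis is used. A standard maximum principle argument applied to $w=\tilde u_x$, which by differentiating \eqref{2022062101} satisfies $\epsilon w''+(m-2\tilde u)w=-m'\tilde u$ with $w(0)=w(L)=0$, shows that $m_x\geq 0$ in $(0,L)$ forces $\tilde u_x\geq 0$ in $(0,L)$ (and the analogous statement for $m_x\leq 0$); the operator $-\epsilon\partial_x^2+(2\tilde u-m)$ is coercive because $\lambda_1(\epsilon,m-2\tilde u)<0$ by the proof of Lemma~\ref{lem04*}, and $w\not\equiv 0$ since $m$ is non-constant. Because $\theta\in[\theta_0,\alpha F(\tilde u_{\max}))\subset(\alpha F(\tilde u_{\min}),\alpha F(\tilde u_{\max}))$ and $F$ is strictly increasing, the intermediate value theorem furnishes a unique $x_0\in(0,L)$ with $\tilde u(x_0)=F^{-1}(\theta/\alpha)$. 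Since $\tilde u$ is monotone and $g$ strictly increasing, the two factors $\alpha F(\tilde u(x))-\theta$ and $g(\tilde u(x))-g(\tilde u(x_0))$ vanish simultaneously at $x_0$ and carry the same sign everywhere else on $[0,L]$, so their product is pointwise non-negative.

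Writing $g(\tilde u)=[g(\tilde u)-g(\tilde u(x_0))]+g(\tilde u(x_0))$ and splitting yields
\[
\mathcal I(k_0)=\int_0^L\frac{\alpha F(\tilde u)-\theta}{d(\tilde u;k_0)}\,[g(\tilde u)-g(\tilde u(x_0))]\,\phi^2\,\mathrm dx+g(\tilde u(x_0))\,\mu^*(k_0)\int_0^L(\phi')^2\,\mathrm dx.
\]
The first integrand is non-negative by the co-monotonicity just established, and the second term is strictly positive because $g(\tilde u(x_0))>0$ and $\phi$ is non-constant. Hence $\mathcal I(k_0)>0$, so \eqref{2022062806} (resp.\ \eqref{2022062807}) is strictly positive at $(\mu^*(k_0),k_0)$, and Lemma~\ref{lem05*} concludes $\partial\mu^*(k)/\partial k>0$ throughout $[0,k^*)$. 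The main technical obstacle is the monotonicity transfer $m_x\mapsto\tilde u_x$: it is classical in 1D but requires invoking the maximum principle for the coercive operator $-\epsilon\partial_x^2+(2\tilde u-m)$; once that is in hand, the co-monotonicity decomposition above is the clean device that converts the already-available $L^2$-positivity into the $g(\tilde u)$-weighted positivity we need.
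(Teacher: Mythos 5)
Your proof is correct, and it takes a genuinely different route from the paper's. The paper establishes positivity of \eqref{2022062806} \emph{pointwise}: it first shows $\tilde u_x>0$ on $(0,L)$ (via $\eta=\tilde u_x/\tilde u$ and the strong maximum principle), then uses $\int_0^L\phi_1(\alpha F(\tilde u)-\theta)e^{k\tilde u}\,\mathrm dx=0$ together with the monotonicity of $\tilde u$ to locate a single sign change of $(\phi_1)_{xx}$, deduces $(\phi_1)_x>0$ on $(0,L)$ from the Neumann conditions, and concludes that the integrand $\tilde u|\nabla\phi|^2+\phi\nabla\phi\cdot\nabla\tilde u$ is pointwise positive. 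You instead undo the integration by parts that produced \eqref{2022062806}, returning to the weighted integral $\mathcal I(k_0)=\int\frac{\alpha F(\tilde u)-\theta}{d(\tilde u;k_0)}g(\tilde u)\phi^2$ (which is exactly the numerator of \eqref{2022062805}), and prove its positivity by splitting off the constant $g\bigl(F^{-1}(\theta/\alpha)\bigr)$: the remainder is non-negative by co-monotonicity, and the constant part equals $g\bigl(F^{-1}(\theta/\alpha)\bigr)\mu^*(k_0)\int(\phi')^2>0$ by the Rayleigh identity. This buys something real: you never need $\phi_1$ to be monotone. In fact your argument is stronger than you advertise — the co-monotonicity of $\alpha F(\tilde u(x))-\theta$ and $g(\tilde u(x))-g\bigl(F^{-1}(\theta/\alpha)\bigr)$ is a pointwise statement about two increasing functions of the single value $\tilde u(x)$ crossing zero at the same value, so it holds with no spatial monotonicity of $\tilde u$ at all; the hypotheses $\Omega=[0,L]$ and $m_x\geq 0$, and your maximum-principle step for $w=\tilde u_x$, are never genuinely used, and your decomposition together with Lemma~\ref{lem05*} proves the conclusion in any dimension and for any $m$ satisfying $(H_1)$. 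Two small points: the ``unique $x_0$'' claim would need strict monotonicity of $\tilde u$, which your weak maximum principle does not deliver, but uniqueness is irrelevant since only the crossing value $F^{-1}(\theta/\alpha)$ matters; and the coercivity of $-\epsilon\partial_x^2+(2\tilde u-m)$ under Dirichlet conditions should be justified by noting that the Dirichlet principal eigenvalue of $\epsilon\partial_x^2+(m-2\tilde u)$ is at most the Neumann one, which is negative — though, as noted, that whole step is dispensable.
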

\begin{proof}
We only consider the case $m_x\geq0$ in $(0,L)$ and $d(u;k)=e^{-ku}$ while other cases can be proven similarly. Recall that $\tilde{u}$ satisfies
$$
\begin{cases}
\epsilon\tilde{u}_{xx}+\tilde{u}(m(x)-\tilde{u})=0, &\mbox{in}\;(0,L),\\
\tilde{u}_x(0)=\tilde{u}_x(L)=0.
\end{cases}
$$
Define $\eta:=\frac{\tilde{u}_x}{\tilde{u}}$ on $[0,L]$. Then $\eta$ satisfies
$$
\begin{cases}
-\epsilon\eta_{xx}+(\tilde{u}-2\epsilon\eta_x)\eta=m_x\geq0, &\mbox{in}\;(0,L),\\
\eta(0)=\eta(L)=0.
\end{cases}
$$
By the strong maximum principle, one finds that
$$
\eta>0\quad\mbox{in}\;(0,L),
$$
which yields that $\tilde{u}_x>0$ in $(0,L)$. Recall that $\phi_1(\mu^*(k),(\alpha F(\tilde{u})-\theta)e^{k\tilde{u}})$ satisfies
\begin{equation}\label{2022072901}
\begin{cases}
\mu^*(k)\phi_{xx}+\phi(\alpha F(\tilde{u})-\theta)e^{k\tilde{u}}=0, &\mbox{in}\;(0,L),\\
\phi_x(0)=\phi_x(L)=0.
\end{cases}
\end{equation}
Integrating the first equation of \eqref{2022072901} over $(0,L)$, one obtains
$$
\int_0^L\phi_1(\alpha F(\tilde{u})-\theta)e^{k\tilde{u}}\mathrm{d}x=0,
$$
where $\phi_1$ denotes $\phi_1(\mu^*(k),(\alpha F(\tilde{u})-\theta)e^{k\tilde{u}})$ for simplicity.
This fact combined with $\tilde{u}_x>0$ in $(0,L)$, implies that there exist some $x^*\in(0,L)$ such that
$$\mathrm{sgn}(\alpha F(\tilde{u}(x))-\theta)=\mathrm{sgn}(x-x^*).$$
This fact together with $\phi_1>0$ on $[0,L]$ and the first equation of \eqref{2022072901} yields that
$$\mathrm{sgn}((\phi_1)_{xx})=-\mathrm{sgn}(x-x^*).$$
which alongside the boundary conditions $(\phi_1)_x(0)=(\phi_1)_x(L)=0$ indicates that
\begin{equation}\label{2022072902}
(\phi_1)_x>0\quad\mbox{in}\;(0,L).
\end{equation}
%
Combining \eqref{2022072902}, $\tilde{u}_x>0$ in $(0,L)$, \eqref{2022062806} and Lemma \ref{lem05*}, one concludes that $\frac{\partial \mu^*(k)}{\partial k}>0$ for any $k\in[0,k^*)$.
\end{proof}

\section{Existence and non-existence of positive solutions to system \eqref{KO2}}
In this section, we shall prove the existence and non-existence of positive solutions to system~\eqref{KO2} with help of index theory based on the results established in section 2. We start by reviewing some well-known results of the index theory.
\subsection{Index theory}
 Let $E$ be a real Banach space and $W$ be a closed convex set of $E$ such that $W-W$ is dense in $E$. The closed convex set $W$ is said to be a cone if $eW\subset W$ for all $e\geq0$ and $W\cap \{-W\}=0$. Define
\[
W_y\triangleq \overline{\{x\in E|y+ex\in W \mbox{ for some $e>0$}\}}.
\]
Denote the maximal linear subspace of $E$ contained in $W_y$ by $S_y$. Assume that $T:\ E\to E$ is a compact linear and Fr\'{e}chet differentiable operator on $E$ such that $y\in W$ is a fixed point of $T$ and $T(W)\subseteq W$. If there exists a closed linear subspace $E_y$ of $E$ such that $E=S_y\bigoplus E_y$ and $W_y$ is generating, then the following result holds.

\begin{lemma}[\cite{Dancer,Ruan}]\label{lem06}
Let $P: \ E\to E_y$ be the projection operator. Then $\mathrm{index}_W(T,y)$ exists if the Fr\'{e}chet derivative $T'(y)$ of $T$ at $y$ has no non-zero fixed point in $W_y$. Moreover,
\begin{itemize}
\item[(i)]
$\mathrm{index}_W(T,y)=0$ if $P\circ T'(y)$ has an eigenvalue bigger than $1$; Otherwise,
\item[(ii)]
$\mathrm{index}_W(T,y)=\mathrm{index}_{S_y}(T'(y),0)=(-1)^{\imath}$, where ${\mathrm{index}}_{S_y}(T'(y),0)$ is the index of the linear operator $T'(y)$ at $0$ in the space $S_y$ and $\imath$ is the sum of
algebraic multiplicities of the eigenvalues of $T'(y)$ restricted in $S_y$ which are greater than 1.
\end{itemize}
\end{lemma}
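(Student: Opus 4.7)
The plan is to invoke the Leray--Schauder fixed-point index theory on retracts in three stages: well-definedness via linearization; a deformation retract of $W_y$ onto $S_y$ for part (ii); and a cone-compatible homotopy-to-infinity argument for part (i). The starting observation is that since $W$ is closed and convex it is a retract of $E$, so for a compact self-map $T\colon W\to W$ the fixed-point index is defined as $\mathrm{index}_W(T,y)=\deg_W(I-T,B_\rho(y)\cap W,0)$ whenever $y$ is an isolated fixed point in $W$ for some small $\rho>0$.

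First I would verify this isolation and reduce to a linear problem. Fr\'echet differentiability gives $T(y+h)-y-T'(y)h=o(\|h\|)$, and the hypothesis that $T'(y)$ has no non-zero fixed point in $W_y$ together with compactness of $I-T'(y)$ on the tangent cone yields a lower bound $\|(I-T'(y))h\|\geq c\|h\|$ for $h\in W_y$ close to $0$. Consequently the straight-line homotopy
\[
H_t(x)=(1-t)T(x)+t\bigl[y+T'(y)(x-y)\bigr],\quad t\in[0,1],
\]
is admissible on $\overline{B_\rho(y)}\cap W$ for sufficiently small $\rho$, so that after translation
\[
\mathrm{index}_W(T,y)=\mathrm{index}_{W_y}(T'(y),0).
\]

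For part (ii), suppose $P\circ T'(y)$ has no eigenvalue greater than $1$. Since $T'(y)$ is compact and $E=S_y\oplus E_y$, the spectral theory of compact operators ensures $I-P\circ T'(y)$ is invertible on $E_y$. I would then construct the deformation $L_s(x)=T'(y)x-s\,Px$ for $s\in[0,1]$; the invertibility above guarantees that $L_s$ has no non-trivial fixed point in $W_y$, so homotopy invariance gives
\[
\mathrm{index}_{W_y}(T'(y),0)=\mathrm{index}_{S_y}\bigl(T'(y)|_{S_y},0\bigr).
\]
On the linear subspace $S_y$, the classical Leray--Schauder index formula for a compact linear operator yields $(-1)^{\imath}$, where $\imath$ is the sum of algebraic multiplicities of eigenvalues of $T'(y)|_{S_y}$ strictly above $1$.

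For part (i), assume $P\circ T'(y)$ has an eigenvalue $\lambda>1$ with eigenvector $\phi\in E_y$. Using the generating property $W_y-W_y=E$, I write $\phi=\phi_+-\phi_-$ with $\phi_\pm\in W_y$ and consider the family $L_s(x)=T'(y)x+s\phi_+$ for $s\geq 0$. A fixed point $x\in W_y$ of $L_s$ would satisfy $(I-T'(y))x=s\phi_+$; pairing with the left eigenfunctional $\phi^*$ of $P\circ T'(y)$ at $\lambda$ and using $(1-\lambda)\langle Px,\phi^*\rangle = s\langle\phi_+,\phi^*\rangle$ forces $\|x\|\to\infty$ as $s\to\infty$. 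Hence the homotopy $\{L_s\}_{s\in[0,S]}$ is admissible on a large ball $B_R\cap W_y$ and for large $s$ has no fixed point there, giving index $0$. The main technical obstacle throughout is ensuring admissibility of these homotopies with respect to the cone structure — in particular ruling out fixed points on $\partial(B_R\cap W_y)$ in the last step — which is precisely where the generating hypothesis on $W_y$, the choice of the direction $\phi\in E_y$, and the compactness of $T'(y)$ must be combined with care.
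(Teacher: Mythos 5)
This lemma is not proved in the paper at all: it is quoted as a known result of Dancer \cite{Dancer} and Ruan--Feng \cite{Ruan}, so there is no in-paper argument to compare against. Your outline does follow the architecture of the classical proofs (linearize to reduce $\mathrm{index}_W(T,y)$ to $\mathrm{index}_{W_y}(T'(y),0)$, then exploit the splitting $E=S_y\oplus E_y$), and the first reduction step is essentially right, modulo the usual care about the retraction used to define the index. But both halves of the dichotomy contain genuine gaps.

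In part (ii), ``$P\circ T'(y)$ has no eigenvalue greater than $1$'' does not make $I-P\circ T'(y)$ invertible on $E_y$, since eigenvalue exactly $1$ is not excluded; and your homotopy $L_s(x)=T'(y)x-sPx$ does not end at an operator reducible to $S_y$: a fixed point of $L_1$ satisfies $2Px=PT'(y)x$, so admissibility would require $P\circ T'(y)$ to avoid every eigenvalue in $[1,2]$, which is not the hypothesis, and $L_1\neq T'(y)|_{S_y}$ in any case. The standard deformation is $L_t=(I-tP)\circ T'(y)$: a nonzero fixed point in $W_y$ for $t\in(0,1)$ would give $P\circ T'(y)$ the eigenvalue $1/(1-t)>1$ (contradicting the case-(ii) hypothesis), the endpoints are handled by the no-fixed-point hypothesis, and $L_1=(I-P)T'(y)$ restricts to $T'(y)$ on $S_y$ because $T'(y)(S_y)\subseteq S_y$ (which you should prove: $T'(y)(W_y)\subseteq W_y$ and $S_y$ is the maximal subspace in $W_y$). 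In part (i), the index-zero argument requires a direction $e$ such that $x=T'(y)x+se$ has \emph{no} solution in $W_y$ for any $s>0$; this works cleanly only when the eigenvector of $P\circ T'(y)$ with eigenvalue $\lambda>1$ can be taken compatible with the wedge. Your substitute $\phi=\phi_+-\phi_-$ destroys the eigenvector property, and the pairing identity $(1-\lambda)\langle Px,\phi^*\rangle=s\langle\phi_+,\phi^*\rangle$ does not follow: the adjoint relation gives $\langle P T'(y)x,\phi^*\rangle=\lambda\langle x,\phi^*\rangle$, not $\lambda\langle Px,\phi^*\rangle$, and nothing guarantees $\langle\phi_+,\phi^*\rangle\neq 0$ or that it has the sign needed to exclude solutions. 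Moreover ``$\|x\|\to\infty$ as $s\to\infty$'' is not the conclusion you need; you need the absence of fixed points on the relative boundary for \emph{all} intermediate $s$ together with their absence in the ball for one large $s$. These are exactly the points where Dancer's original argument does real work, so the sketch cannot be accepted as a proof as it stands.
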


\subsection{Preliminary results}
We first  quote  an important result on the eigenvalue problem \cite{Dancer1984,li1988coexistence}.
\begin{lemma}\label{lem05}
Assume $r(x)\in C([0,L])$, $\mu>0$, and $M>0$ such that $M+r>0$ on $\Omega$. If $\lambda_1(\mu,r)>0$, then the weighted eigenvalue problem,
\begin{equation}\label{2022061801}
\begin{cases}
-\mu\Delta \phi+M\phi=\kappa(M+r)\phi, \qquad
&x\in \Omega,
\\
\nabla \phi\cdot \n=0,&x\in \partial\Omega,
\end{cases}
\end{equation}
has an eigenvalue $\kappa$ smaller than $1$. If $\lambda_1(\mu,r)<0$, then it  has no eigenvalue smaller than or equal to $1$.
\end{lemma}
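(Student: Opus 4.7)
The plan is to reduce both statements to the variational characterization \eqref{2022061703} of $\lambda_1(\mu,r)$ via a trivial algebraic rearrangement that relates the Rayleigh quotient of the weighted problem \eqref{2022061801} to the one of \eqref{2022032803}. The bridge is the observation that ``$\kappa=1$'' in \eqref{2022061801} collapses the $M\phi$ terms and turns \eqref{2022061801} into the eigenvalue equation $-\mu\Delta\phi=r\phi$, so the threshold $\kappa=1$ for the weighted problem corresponds exactly to the threshold $\lambda=0$ for the unweighted one.

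First, I would set up the spectral theory for \eqref{2022061801}. Since $M>0$ and $M+r>0$ on $\bar\Omega$, the bilinear form $a(\phi,\psi):=\int_\Omega(\mu\nabla\phi\cdot\nabla\psi+M\phi\psi)\,\mathrm{d}x$ is continuous and coercive on $H^1(\Omega)$, while $b(\phi,\psi):=\int_\Omega(M+r)\phi\psi\,\mathrm{d}x$ defines a compact, symmetric, positive-definite form on $H^1(\Omega)$ via the embedding $H^1(\Omega)\hookrightarrow L^2(\Omega)$. Standard spectral theory then yields a discrete sequence of positive eigenvalues $0<\kappa_1<\kappa_2\leq\kappa_3\leq\cdots\to\infty$, with the principal one simple and having a positive eigenfunction, characterized by
\begin{equation*}
\kappa_1=\inf_{0\neq\phi\in H^1(\Omega)}\frac{\int_\Omega(\mu|\nabla\phi|^2+M\phi^2)\,\mathrm{d}x}{\int_\Omega(M+r)\phi^2\,\mathrm{d}x}.
\end{equation*}
In particular, \eqref{2022061801} admits an eigenvalue $<1$ (resp.\ $\leq 1$) if and only if $\kappa_1<1$ (resp.\ $\kappa_1\leq 1$).

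Next I would exploit the algebraic identity: for any $\phi\in H^1(\Omega)\setminus\{0\}$, subtracting $M\int_\Omega\phi^2\,\mathrm{d}x$ from both sides gives
\begin{equation*}
\int_\Omega(\mu|\nabla\phi|^2+M\phi^2)\,\mathrm{d}x<\int_\Omega(M+r)\phi^2\,\mathrm{d}x\ \Longleftrightarrow\ \int_\Omega\bigl(-\mu|\nabla\phi|^2+r\phi^2\bigr)\,\mathrm{d}x>0.
\end{equation*}
Combined with \eqref{2022061703}, this shows (i) the existence of some admissible $\phi$ witnessing $\kappa_1<1$ is equivalent to $\lambda_1(\mu,r)>0$, which proves the first assertion; and (ii) if $\lambda_1(\mu,r)<0$, then \eqref{2022061703} forces every admissible $\phi$ to satisfy $\int_\Omega(-\mu|\nabla\phi|^2+r\phi^2)\mathrm{d}x<0$, so the left-hand inequality above fails for every $\phi$, whence $\kappa_1>1$; monotonicity $\kappa_j\geq\kappa_1$ then rules out any eigenvalue $\leq 1$, proving the second assertion.

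The main obstacle is essentially bookkeeping rather than genuinely analytic: one must carefully track strict versus non-strict inequalities in passing between the two variational formulas, and recognize that the borderline case $\lambda_1(\mu,r)=0$ (excluded from the statement) corresponds precisely to $\kappa_1=1$, with $\phi_1(\mu,r)$ itself furnishing the eigenfunction. The underlying spectral framework (discreteness, simplicity of $\kappa_1$, positivity of its eigenfunction) is entirely standard and can be invoked from, e.g., \cite{CantrellCosner}.
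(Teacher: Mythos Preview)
Your variational argument is correct. The key identity $\kappa_1<1\iff\lambda_1(\mu,r)>0$ (and its strict counterpart for $\lambda_1<0$) follows cleanly from the Rayleigh-quotient characterizations once you cancel the $M\phi^2$ terms, and the self-adjointness of the weighted problem (positive weight $M+r$) guarantees that all eigenvalues are real and bounded below by $\kappa_1$, so showing $\kappa_1>1$ indeed excludes every eigenvalue $\leq 1$.

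As for comparison with the paper: there is nothing to compare against. The paper does not give its own proof of this lemma; it merely quotes the result from \cite{Dancer1984,li1988coexistence}. Your write-up therefore supplies a self-contained argument where the paper offers only a citation. The variational route you chose is essentially the standard one underlying those references, so you are not doing anything exotic---but you are doing more than the paper does.
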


Next we give an upper bound on possible positive solutions of system~\eqref{KO2}.

\begin{lemma}\label{lem07}
Let $(u,w)$ be a positive  solution of system~\eqref{KO2}. Then
\begin{equation}\label{2022061902}
u<\tilde{u}\leq {m}_{\max} \mbox{ and $w\leq c_0$ on $\Omega$, }
\end{equation}
where $m_{\max}=\max\limits_{x\in\bar{\Omega}}m$ and $c_0>0$ is a constant depending on $m$, $\alpha$, $\theta$, $F(\cdot)$ and $d(\cdot)$.
\end{lemma}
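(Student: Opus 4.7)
The plan is to handle the two bounds in \eqref{2022061902} separately: first $u<\tilde u\leq m_{\max}$ by comparison with the semi-trivial profile $\tilde u$, and then $w\leq c_0$ by extracting an $L^1$ bound from an integral identity and upgrading it to an $L^\infty$ bound via elliptic regularity.

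The bound on $u$ starts from rewriting the first equation of \eqref{KO2} as $\epsilon\Delta u+u(m-u)=\frac{F(u)}{d(u)}w\geq 0$, which identifies $u$ as a positive subsolution of the logistic problem \eqref{2022062101}. Since Proposition \ref{prop2} says that $\tilde u$ is the unique positive solution of \eqref{2022062101}, the standard sub-/supersolution argument for logistic equations (cf.~\cite{CantrellCosner}) delivers $u\leq \tilde u$ in $\Omega$. To upgrade to the strict inequality $u<\tilde u$, I would set $z:=\tilde u-u\geq 0$ and note that subtracting the two equations yields
\[
-\epsilon\Delta z+(u+\tilde u-m)\,z=\frac{F(u)}{d(u)}\,w>0 \quad\text{in }\Omega,\qquad \partial_\nu z=0\text{ on }\partial\Omega,
\]
where the strict positivity of the right-hand side uses $w>0$ together with $F(u)>0$ at any point where $u=\tilde u>0$. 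An interior zero of $z$ would be a minimum, forcing $\Delta z\geq 0$ in direct conflict with $-\epsilon\Delta z>0$ at that point; a boundary zero is excluded by Hopf's boundary-point lemma (applied in the version valid for a general zeroth-order coefficient when the minimum value is $0$), which would force $\partial_\nu z<0$ and thus contradict the Neumann condition. The companion bound $\tilde u\leq m_{\max}$ is the standard maximum-principle argument applied to \eqref{2022062101}: at a maximum point of $\tilde u$, $\Delta\tilde u\leq 0$ gives $\tilde u\leq m\leq m_{\max}$ there.

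For the $L^1$ control of $w$, I would combine the two equations of \eqref{KO2}: multiplying the first by $\alpha$, adding to the second, and integrating over $\Omega$ collapses both the Laplacian and the cross-reaction terms, leaving the clean identity
\[
\theta\int_\Omega\frac{w}{d(u)}\,\mathrm{d}x=\alpha\int_\Omega u(m-u)\,\mathrm{d}x.
\]
Since $u\in[0,m_{\max}]$ by the previous step, the right-hand side is bounded by a constant depending only on $m,\alpha,\theta,|\Omega|$, and the assumption $d\in C^2([0,\infty))$ with $d>0$ pins $d(u)$ between two positive constants on $[0,m_{\max}]$. This yields $\int_\Omega w\,\mathrm{d}x\leq C_1$ with $C_1=C_1(m,\alpha,\theta,d,|\Omega|)$.

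Finally, to pass from $L^1$ to $L^\infty$ I would view the second equation of \eqref{KO2} as the linear elliptic equation $\mu\Delta w+h(x)\,w=0$ with coefficient $h(x):=(\alpha F(u)-\theta)/d(u)$ whose $L^\infty$ norm is controlled through the bound on $u$. Positivity of $w$ (itself a consequence of the strong maximum principle applied to this linear equation) then allows the Harnack inequality, applied up to the Neumann boundary, to give $\max_{\bar\Omega}w\leq C_2\min_{\bar\Omega}w$ with $C_2$ depending only on $\|h\|_\infty/\mu$ and $\Omega$. Combining this with the mean-value bound $\min_{\bar\Omega}w\leq C_1/|\Omega|$ produces the desired pointwise estimate $w\leq c_0$. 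The main technical care lies in invoking the Harnack (or an equivalent Moser iteration) estimate in a form that applies up to the Neumann boundary; this is standard elliptic theory but is the one step that is not immediate from the equations themselves.
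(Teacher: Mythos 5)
Your treatment of the prey bound and of the $L^1$ control of $w$ is correct and coincides with the paper's own argument: the paper likewise obtains $u<\tilde u\leq m_{\max}$ from the upper--lower solution method and the maximum principle, and derives exactly your identity $\theta\int_\Omega w/d(u)\,\mathrm{d}x=\alpha\int_\Omega u(m-u)\,\mathrm{d}x$ to conclude $\int_\Omega w\,\mathrm{d}x\leq \alpha d(0)m_{\max}^2/(4\theta)$ (the paper uses $d'\leq0$ from $(H_3)$ to bound $d(u)\leq d(0)$, you use positivity and continuity of $d$ on $[0,m_{\max}]$; both work). Your detailed strict-inequality argument for $z=\tilde u-u$ is also fine, since the right-hand side $\frac{F(u)}{d(u)}w$ is strictly positive once $u>0$ and $w>0$ are known from the strong maximum principle.

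The divergence, and the one genuine gap, is in the passage from $L^1$ to $L^\infty$. The paper invokes the Alikakos $L^1$-to-$L^\infty$ iteration \cite[Theorem 3.1]{Alikakos} and explicitly records that the resulting $c_0$ is independent of $\mu$ and $\epsilon$; this uniformity is what the statement encodes by listing only $m$, $\alpha$, $\theta$, $F(\cdot)$ and $d(\cdot)$ as the dependencies of $c_0$, and it is used essentially later (e.g.\ in the proof of Theorem \ref{thm6} one sends $\mu\to0$ along a family of positive solutions and needs $\|w_\mu\|_{L^\infty}$ bounded uniformly; similarly $\epsilon\to\infty$ and $\mu\to\infty$ in Theorems \ref{thm5} and \ref{thm4}). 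Your route --- writing the second equation as $\Delta w+(h/\mu)w=0$ with $h=(\alpha F(u)-\theta)/d(u)$ and applying the Harnack inequality together with $\min_{\bar\Omega}w\leq C_1/|\Omega|$ --- produces a Harnack constant depending on $\|h\|_{L^\infty}/\mu$, which blows up as $\mu\to0$. Hence your $c_0$ depends on $\mu$ and degenerates precisely in the regime where the lemma is needed. For each fixed $\mu>0$ (and uniformly for $\mu$ bounded below, in particular as $\mu\to\infty$ or $\epsilon\to\infty$) your argument is valid and arguably cleaner, but to prove the statement as written you need an $L^1\to L^\infty$ estimate whose constant does not deteriorate as $\mu\to0$, which is what the cited Alikakos-type iteration is meant to supply; the single-equation Harnack inequality cannot do this.
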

\begin{proof}
Combining the standard method of upper-lower solutions and the maximum principle, one can deduce that
$$
u<\tilde{u}\leq {m}_{\max}\;\;\mbox{on}\;\;\Omega.
$$
Multiplying the first equation of system \eqref{KO2} by $\alpha$, adding the resulting equation to the second equation of system \eqref{KO2} and integrating it on $\Omega$, one obtains
$$
\int_\Omega\frac{\theta}{d(u)}w\mathrm{d}x=\alpha\int_\Omega u(m-u)\mathrm{d}x.
$$
This combined with $u<{m}_{\max}$ on $\Omega$ and  $(H_3)$ yields that
$$
\int_\Omega w\mathrm{d}x<\frac{\alpha d(0)}{\theta}\int_\Omega u(m-u)\mathrm{d}x<\frac{\alpha d(0)m_{\max}^2}{4\theta},
$$
which together with  \cite[Theorem 3.1]{Alikakos} implies that there exists $c_0$ depending on $m$, $\alpha$, $\theta$, $F(\cdot)$ and $d(\cdot)$ (independents on $\mu$ and $\epsilon$) such that
$
w\leq c_0\;\;\mbox{on}\;\;\Omega.
$
\end{proof}

Before moving forward,  we introduce some notations.
\begin{eqnarray*}
X&= & \{u\in C^1(\bar{\Omega})\cap C^2(\Omega)\ |\ \nabla u\cdot \n=0\;\mbox{on}\;\partial\Omega\},
\\
E&=&C(\bar{\Omega})\times C(\bar{\Omega}),
\\
W&=&C^{+}(\bar{\Omega})\times C^{+}(\bar{\Omega}), \mbox{ where $C^{+}(\bar{\Omega})=\{u\in C(\bar{\Omega})\ |\ u\geq 0\}$,}
\\
\mathcal{D} & = & \left\{(u,w)\in W\ | \ u<1+m_{\max}, w<1+c_0\right\}.
\end{eqnarray*}
 Let $\mathcal{T}_1^{-1}$ be the inverse operator of $\mathcal{T}_1$ with $\mathcal{T}_1(u)=-\epsilon \Delta u+\tilde{M}u$ for $u\in X$
and $\mathcal{T}_2^{-1}$ be the inverse operator of $\mathcal{T}_2$ with $\mathcal{T}_2(w)=-\mu\Delta w+\tilde{M}w$ for $u\in X$.
For any $\delta\in[0,1]$, we define $T_\delta:\mathcal{D}\rightarrow W$ by
\[
T_\delta(u,w)=
\begin {pmatrix}
\mathcal{T}_1^{-1}\left[u\left(\tilde{M}+\delta m(x)
-u-\frac{F(u)w}{d(u)u}\right)\right]
\\[6pt]	
\mathcal{T}_2^{-1}\left[w\left(\tilde{M}+\frac{\alpha F(u)-\theta}{d(u)}\right)\right]
\end {pmatrix}, \qquad (u,w)\in \mathcal{D},
\]
where $\tilde{M}$ is large such that
$$
\tilde{M}-|m(x)|
-u-\frac{F(u)w}{d(u)u}>0\quad\mbox{and}\quad\tilde{M}+\frac{\alpha F(u)-\theta}{d(u)}>0,\quad\mbox{for}\;\;(u,w)\in\mathcal{D}.
$$
For example, one can choose $\tilde{M}=\frac{\theta}{d(m_{\max})}+2\|m\|_{L^\infty}+\frac{c_0}{d(m_{\max})}\max\limits_{u\in[0,m_{\max}]}\frac{F(u)}{u}$, where $\max\limits_{u\in[0,m_{\max}]}\frac{F(u)}{u}$ is bounded due to the assumption $(H_2)$. It is well-known that $T_1$ is a compact operator and $T_1(\mathcal{D})\subseteq W$. Clearly system~\eqref{KO2} has a positive solution if and only if  $T_1$ admits a positive fixed point on $\mathcal{D}$ by Lemma~\ref{lem07}.

Direct computations yield
\begin{eqnarray*}
& W_{(0,0)}=C^{+}(\bar{\Omega})\times C^{+}(\bar{\Omega}),\;S_{(0,0)}=\{(0,0)\},\;E_{(0,0)}=E, &
\\
& W_{(\tilde{u},0)}=C(\bar{\Omega})\times C^{+}(\bar{\Omega}),\;S_{(\tilde{u},0)}=C(\bar{\Omega})\times\{0\},\;E_{(\tilde{u},0)}=\{0\}\times C(\bar{\Omega}).&
\end{eqnarray*}

With the above preparations, we start to calculate the {\it index} of $(0,0)$ and $(\tilde{u},0)$.
\begin{lemma}\label{lem08}
The following results on the index holds.
\begin{itemize}
\item[(i)] $\mathrm{index}_W(T_1,(0,0))=0$.

\item[(ii)] $\mathrm{index}_W(T_1,(\tilde{u},0))=
\begin{cases}
1,&\mbox{if $(\tilde{u},0)$ is linearly stable}, \\
0,&\mbox{if $(\tilde{u},0)$ is linearly unstable}.
\end{cases}
$

\item[(iii)] ${\rm{\rm{deg}}}_W(I-T_1,\mathcal{D})=1$.
\end{itemize}
\end{lemma}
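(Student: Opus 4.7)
\medskip

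\noindent\textbf{Proof proposal.} The common thread for all three parts is to linearize the fixed point operator at the relevant fixed point and convert each eigenvalue problem of the form $\mathcal{T}_i^{-1}[(\tilde M+r)\phi]=\kappa\phi$ into the weighted eigenvalue problem of Lemma~\ref{lem05}. Indeed, $\mathcal{T}_i^{-1}[(\tilde M+r)\phi]=\kappa\phi$ is equivalent to
\begin{equation*}
-\mu_i\Delta\phi+\tilde M\phi=\tfrac{1}{\kappa}(\tilde M+r)\phi \quad\text{in }\Omega,\qquad \nabla\phi\cdot \n=0\ \text{on }\partial\Omega,
\end{equation*}
so $\kappa>1$ occurs iff the weighted problem admits an eigenvalue $<1$, i.e.\ (by Lemma~\ref{lem05}) iff $\lambda_1(\mu_i,r)>0$; $\kappa=1$ occurs iff $\lambda_1(\mu_i,r)=0$. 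With this dictionary, parts (i) and (ii) reduce to the sign analysis of the principal eigenvalues already carried out in Section~2.

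For (i), I would compute $T_1'(0,0)(\phi,\psi)=(\mathcal{T}_1^{-1}[(\tilde M+m)\phi],\,\mathcal{T}_2^{-1}[(\tilde M-\theta/d(0))\psi])$ (the cross term vanishes since $F(0)=0$). The operator is block-diagonal. By Lemma~\ref{lem03}, $\lambda_1(\epsilon,m)>0$, so the dictionary above produces an eigenvalue $\kappa>1$ for the first block. Fixed points in $W_{(0,0)}$ would force $\lambda_1(\epsilon,m)=0$ or $\lambda_1(\mu,-\theta/d(0))=0$, both excluded. Hence Lemma~\ref{lem06}(i) applies and gives $\mathrm{index}_W(T_1,(0,0))=0$.

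For (ii), I would compute
\begin{equation*}
T_1'(\tilde u,0)(\phi,\psi)=\Bigl(\mathcal{T}_1^{-1}\bigl[(\tilde M+m-2\tilde u)\phi-\tfrac{F(\tilde u)}{d(\tilde u)}\psi\bigr],\ \mathcal{T}_2^{-1}\bigl[\bigl(\tilde M+\tfrac{\alpha F(\tilde u)-\theta}{d(\tilde u)}\bigr)\psi\bigr]\Bigr).
\end{equation*}
A non-zero fixed point $(\phi,\psi)\in W_{(\tilde u,0)}$ would, via the $\psi$-equation, force $\lambda_1(\mu,(\alpha F(\tilde u)-\theta)/d(\tilde u))=0$ with a non-negative eigenfunction $\psi$ (ruled out away from the neutrally stable case); then $\psi\equiv 0$ and the $\phi$-equation would require $\lambda_1(\epsilon,m-2\tilde u)=0$, contradicting $\lambda_1(\epsilon,m-2\tilde u)<0$ proved in Lemma~\ref{lem04*}. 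Since $E_{(\tilde u,0)}=\{0\}\times C(\bar\Omega)$, the projection $P$ keeps only the $\psi$-block, and the dictionary shows that $P\circ T_1'(\tilde u,0)$ has an eigenvalue $>1$ precisely when $\lambda_1(\mu,(\alpha F(\tilde u)-\theta)/d(\tilde u))>0$. When $(\tilde u,0)$ is unstable this gives $\mathrm{index}_W=0$ via Lemma~\ref{lem06}(i); when $(\tilde u,0)$ is stable we apply Lemma~\ref{lem06}(ii) and must count eigenvalues $>1$ of $T_1'(\tilde u,0)$ restricted to $S_{(\tilde u,0)}=C(\bar\Omega)\times\{0\}$. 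Upper triangularity reduces this to eigenvalues of $\phi\mapsto\mathcal{T}_1^{-1}[(\tilde M+m-2\tilde u)\phi]$, and the dictionary combined with $\lambda_1(\epsilon,m-2\tilde u)<0$ shows there are none, so $\imath=0$ and the index equals $1$.

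For (iii), I would use the linear homotopy $H(t,(u,w))=t\,T_1(u,w)$, $t\in[0,1]$. The main task, and the technical heart of this step, is to prove that $H$ has no fixed points on $\partial\mathcal{D}$ uniformly in $t$. A maximum-principle argument on the rewritten first equation
\begin{equation*}
\epsilon\Delta u=(1-t)\tilde M u-tu(m-u)+t\tfrac{F(u)}{d(u)}w
\end{equation*}
yields $u(x)\le m_{\max}-(1-t)\tilde M/t$ at any interior maximum, so in particular $u<1+m_{\max}$; and for $t\in[0,\tilde M/(\tilde M+m_{\max})]$ it forces $u\equiv 0$, after which the $w$-equation collapses to an elliptic equation with strictly positive potential and hence $w\equiv 0$. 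For the remaining range of $t$, multiplying the $u$-equation by $\alpha$ and adding the $w$-equation, integrating, and using $d(u)\ge d(m_{\max})$ give a uniform $L^1$-bound on $w$ independent of $t$; the same Alikakos-type iteration used in Lemma~\ref{lem07} then upgrades this to a uniform $L^\infty$-bound $w<1+c_0$. Homotopy invariance of the cone degree then yields $\mathrm{deg}_W(I-T_1,\mathcal{D})=\mathrm{deg}_W(I,\mathcal{D})=1$, since the unique fixed point $(0,0)$ of the zero map lies in the interior of $\mathcal{D}$. The main obstacle I anticipate is precisely this uniform-in-$t$ a priori bound on $w$, because the $L^1$-to-$L^\infty$ passage via Alikakos must be checked to hold with constants independent of $t$.
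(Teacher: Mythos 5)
Your parts (i) and (ii) follow the paper's proof essentially verbatim: the same linearizations, the same use of Lemma \ref{lem05} to translate eigenvalues $\kappa>1$ of $\mathcal{T}_i^{-1}[(\tilde M+r)\,\cdot\,]$ into the sign of $\lambda_1(\cdot,r)$, the same exclusion of non-zero fixed points of the derivative, and the same count $\imath=0$ in the stable case. Part (iii) is where you genuinely diverge. The paper does not contract $T_1$ linearly to the zero map; it uses the homotopy $T_\delta$ built into the definition of the operator, which deforms $m$ to $\delta m$ while keeping the full reaction structure, verifies the a priori bounds of Lemma \ref{lem07} uniformly in $\delta$, and then, for $\delta$ small, observes that the only nonnegative fixed points are $(0,0)$ and $(\tilde u_{\delta m},0)$; the degree is then the sum of the two indices, $0+1=1$, reusing the arguments of parts (i) and (ii) (with $(\tilde u_{\delta m},0)$ linearly stable for small $\delta$). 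Your route instead normalizes the degree directly via $\deg_W(I,\mathcal{D})=1$, at the price of the uniform-in-$t$ a priori estimate you correctly identify as the crux; this does close, since the integral identity $\theta\int_\Omega w/d(u)\,\mathrm{d}x\le \alpha\int_\Omega u(m-u)\,\mathrm{d}x$ survives the extra $(1-t)\tilde M$ terms (they carry the favorable sign), and the Alikakos iteration only needs an $L^\infty$ bound on the zeroth-order coefficient, which is uniform in $t\in[0,1]$ once $u\le m_{\max}$ is known. What your approach buys is that you never have to classify all nonnegative fixed points of the deformed problem; what the paper's buys is that the a priori bounds for $T_\delta$ are literally those of Lemma \ref{lem07} with $m$ replaced by $\delta m$, so no new estimate is required.
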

\begin{proof}
For statement (i), we linearize $T_1$ at $(0,0)$ to obtain
\begin{equation}\nonumber
DT_1(0,0)(\phi,\psi)=
\begin{pmatrix}
\mathcal{T}_1^{-1}[(\tilde{M}+m)\phi]
\\
\mathcal{T}_2^{-1}\left[\left(\tilde{M}-\frac{\theta}{d(0)}\right)\psi\right]
\end{pmatrix}.
\end{equation}
It is straightforward to see that  the operator $DT_1(0,0)$ has no non-zero fixed point in $W_{(0,0)}$ due to the fact that $\lambda_1(\epsilon,m)>0$ and $\lambda_1\left(\mu,-\frac{\theta}{d(0)}\right)=-\frac{\theta}{d(0)}<0$. From Lemma \ref{lem03} and  Lemma \ref{lem05}, it follows that $DT_1(0,0)$ admits an eigenvalue bigger than 1 with corresponding eigenfunction $(\phi_1(\epsilon,m),0)$. Therefore, by Lemma~\ref{lem06}, we get $\mathrm{index}_W(T_1,(0,0))=0$.

As to assertion (ii), linearizing $T_1$ at $(\tilde{u},0)$, one has
\[
DT_1(\tilde{u},0)(\phi,\psi)=
\begin {pmatrix}
\mathcal{T}_1^{-1}\left[\phi\left(\tilde{M}+m(x)
-2\tilde{u}\right)-\frac{F(\tilde{u})}{d(\tilde{u})}\psi\right]
\\[6pt]	
\mathcal{T}_2^{-1}\left[\psi\left(\tilde{M}+\frac{\alpha F(\tilde{u})-\theta}{d(\tilde{u})}\right)\right]
\end {pmatrix}.
\]
We will show that the operator $DT_1(\tilde{u},0)$ has no non-zero fixed point in $W_{(\tilde{u},0)}$. Otherwise, assume that $DT_1(\tilde{u},0)$ has a non-zero fixed point $(\phi,\psi)$ in $W_{(\tilde{u},0)}$. Then $(\phi,\psi)$ satisfies
\[
\begin{cases}
\epsilon\Delta \phi+(m-2\tilde{u})\phi-\frac{F(\tilde{u})}{d(\tilde{u})}\psi=0, \quad &\mbox{in}\; \Omega,
\\
\mu\Delta \psi+\frac{\alpha F(\tilde{u})-\theta}{d(\tilde{u})}\psi=0, \quad &\mbox{in}\;\Omega,
\\
\nabla\phi\cdot \n=\nabla\psi\cdot \n=0,\quad&\mbox{on}\;\partial\Omega.
\end{cases}
\]
If $\psi=0$, then $\phi=0$ due to $\lambda_1(\epsilon,m-2\tilde{u})<\lambda_1(\epsilon,m-\tilde{u})=0$ by Lemma~\ref{lem01}(iii). Thus, one obtains $\psi\in C^{+}(\bar{\Omega})\setminus \{0\}$, which further implies that $\lambda_1\big(\mu,\frac{\alpha F(\tilde{u})-\theta}{d(\tilde{u})}\big)=0$.
This contradicts our assumption that $(\tilde{u},0)$ is linearly stable ($\lambda_1\big(\mu,\frac{\alpha F(\tilde{u})-\theta}{d(\tilde{u})}\big)<0$)  or unstable ($\lambda_1\big(\mu,\frac{\alpha F(\tilde{u})-\theta}{d(\tilde{u})}\big)>0$).
Hence, the operator $DT(\tilde{u},0)$ does not have non-zero fixed points in $W_{(\tilde{u},0)}$.  If $(\tilde{u},0)$ is linearly unstable, that is,  $\lambda_1\big(\mu,\frac{\alpha F(\tilde{u})-\theta}{d(\tilde{u})}\big)>0$, one attains that $\mathcal{T}_2^{-1}\big[\cdot\big(\tilde{M}+\frac{\alpha F(\tilde{u})-\theta}{d(\tilde{u})}\big)\big]$ has an eigenvalue bigger than 1 by Lemma \ref{lem05}. This combined with Lemma \ref{lem06} gives that $\mathrm{index}_W(T_1,(\tilde{u},0))=0$. On the other hand, if  $(\tilde{u},0)$ is linearly stable, by Lemma \ref{lem05}, one knows that all the eigenvalues of the operator $\mathcal{T}_2^{-1}\big[\cdot\big(\tilde{M}+\frac{\alpha F(\tilde{u})-\theta}{d(\tilde{u})}\big)\big]$ are smaller than 1. This together with Lemma \ref{lem06} yields that
$$
\mathrm{index}_W(T_1,(\tilde{u},0))=(-1)^{\imath},
$$
where $\imath$ is the sum of
algebraic multiplicities of the eigenvalues of the operator $DT_1(\tilde{u},0)$ restricted in $S_y$ which are greater than 1.

Next we show that all the eigenvalues of the operator $DT_1(\tilde{u},0)$  restricted in $S_y$  are smaller than 1. If not, we assume the operator $DT_1(\tilde{u},0)$ admits an eigenvalue $\kappa_0\geq1$ with  eigenfunction $(\phi,0)\in S_y$ satisfying $\|\phi\|_{L^2}=1$. Then $\kappa_0$ and $(\phi,0)$ satisfy
\[
\begin{cases}
-\epsilon\Delta \phi+\tilde{M}\phi=\frac{\phi}{\kappa_0}(\tilde{M}+m-2\tilde{u}), \quad &\mbox{in}\; \Omega,
\\
\nabla\phi\cdot \n=0,\quad&\mbox{on}\;\partial\Omega.
\end{cases}
\]
This contradicts the fact that $\lambda_1(\epsilon,m-2\tilde{u})<0$ and Lemma \ref{lem05}. Therefore, one concludes
$$
\mathrm{index}_W(T_1,(\tilde{u},0)=(-1)^0=1,\quad\mbox{when}\;(\tilde{u},0)\;\mbox{is linearly stable}.
$$

Finally, we prove that ${\rm{\rm{deg}}}_W(I-T_1,\mathcal{D})=1$.
If $T_\delta$ has a fixed point $(u,w)$, then it satisfies
\begin{equation}\label{2022061901}
\begin{cases}
\epsilon\Delta u+u(\delta m(x)-u)-\frac{F(u)w}{d(u)}=0, &\mbox{in}\;\Omega,\\
\mu\Delta w+\alpha \frac{F(u)w}{d(u)}-\theta \frac{w}{d(u)}=0, &\mbox{in}\;\Omega,\\
\nabla u \cdot \n=\nabla w \cdot \n=0,&\mbox{on}\;\partial\Omega.
\end{cases}
\end{equation}
Similar to  Lemma~\ref{lem07}, for all  $\delta\in[0,1]$, one can show that if system~\eqref{2022061901} has a positive solution $(u,w)$ then it satisfies \eqref{2022061902} (if necessary, one can choose large $c_0$). Then, $T_\delta$ doesn't have any fixed point on $\partial \mathcal{D}$. Thus, by the homotopy invariance, one obtains
\begin{equation}\label{2022061903}
\deg_W(I-T_1,\mathcal{D})=\deg_W(I-T_{\delta},\mathcal{D}).
\end{equation}
Obviously, system \eqref{2022061901} only admits non-negative solution $(0,0)$ and $(\tilde{u}_{\delta m},0)$ ($\tilde{u}_{\delta m}$ denotes the unique positive solution of \eqref{2022062101} by replacing $m$ with $\delta m$) when $\delta$ is small enough. Therefore, one has
\begin{equation}\label{2022061904}
\deg_W(I-T_{\delta},\mathcal{D})=\mathrm{index}_W(T_{\delta},(0,0))+\mathrm{index}_W(T_{\delta},(\tilde{u}_{\delta m},0)),
\end{equation}
where $\delta$ is small enough.
Linearizing $T_{\delta}$ at $(0,0)$, one gets
\begin{equation}\nonumber
DT_{\delta}(0,0)(\phi,\psi)=
\begin{pmatrix}
\mathcal{T}_1^{-1}[(\tilde{M}+\delta m)\phi]
\\
\mathcal{T}_2^{-1}\left[\left(\tilde{M}-\frac{\theta}{d(0)}\right)\psi\right]
\end{pmatrix}.
\end{equation}
Since $\lambda_1(\epsilon,\delta m)>0$ and $\lambda_1\left(\mu,-\frac{\theta}{d(0)}\right)=-\frac{\theta}{d(0)}<0$, similar to the results in (i), by Lemma \ref{lem05} and Lemma \ref{lem06}, we have
\begin{equation}\label{2022091701}
\mathrm{index}_W(T_{\delta},(0,0))=0,
\end{equation}
where $\delta$ is small enough. It is easy to derive that $(\tilde{u}_{\delta m},0)$ is linearly stable when $\delta$ is small enough. Therefore, from statement (ii), it follows that
$$
\mathrm{index}_W(T_{\delta},(\tilde{u}_{\delta m},0))=1,\quad \mbox{when $\delta$ is small enough},
$$
which along with \eqref{2022061903}, \eqref{2022061904} and  \eqref{2022091701} completes the proof.
\end{proof}

With the help of Lemma \ref{lem08}, we give the sufficient conditions for the existence of positive solution to system \eqref{KO2}.
\begin{lemma}\label{lem09}
If $(\tilde{u},0)$ is linearly unstable, then system  \eqref{KO2} admits at least  one positive solution.
\end{lemma}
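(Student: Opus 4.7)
The plan is a standard argument by contradiction using the topological degree computations already established in Lemma~\ref{lem08}. Specifically, I would argue that if no positive solution existed, then the only non-negative fixed points of $T_1$ in $\mathcal{D}$ would be $(0,0)$ and $(\tilde{u},0)$, and the additivity of the degree would force $\deg_W(I-T_1,\mathcal{D})$ to equal the sum of the two indices, contradicting the value $1$ established in Lemma~\ref{lem08}(iii).

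In more detail, I would first recall that, by Lemma~\ref{lem07}, every non-negative fixed point $(u,w)$ of $T_1$ lies in the interior of $\mathcal{D}$, so no fixed point of $T_1$ sits on $\partial\mathcal{D}$ and the degree $\deg_W(I-T_1,\mathcal{D})$ is well-defined. Assume for contradiction that \eqref{KO2} has no positive solution; then the complete set of non-negative fixed points of $T_1$ in $\overline{\mathcal{D}}$ is $\{(0,0),(\tilde{u},0)\}$, since under $(H_1)$ the semi-trivial solution $(\tilde{u},0)$ is the unique one with $u>0$, $w\equiv0$, and analogously no fixed point of the form $(0,w)$ with $w\not\equiv0$ can exist (because the $w$-equation reduces to $\mu\Delta w-\frac{\theta}{d(0)}w=0$ with Neumann condition, whose only non-negative solution is $w\equiv 0$).

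Then by the additivity of the Leray–Schauder degree on cones, together with Lemma~\ref{lem08}(i)--(ii), I would compute
\begin{equation*}
\deg_W(I-T_1,\mathcal{D})=\mathrm{index}_W(T_1,(0,0))+\mathrm{index}_W(T_1,(\tilde{u},0))=0+0=0,
\end{equation*}
using the hypothesis that $(\tilde{u},0)$ is linearly unstable so that its index vanishes. This contradicts Lemma~\ref{lem08}(iii), which asserts $\deg_W(I-T_1,\mathcal{D})=1$. Hence $T_1$ must admit a third non-negative fixed point $(u_\ast,w_\ast)\in\mathcal{D}$ with both components non-trivial, and this is a non-negative solution of \eqref{KO2}.

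The final step is to upgrade this non-negative non-trivial solution to a strictly positive one. I would apply the strong maximum principle and the Hopf boundary lemma to each equation of \eqref{KO2}: the $w$-equation is linear in $w$ with coefficient $\frac{\alpha F(u)-\theta}{d(u)}$ bounded, so $w_\ast\not\equiv 0$ forces $w_\ast>0$ on $\overline{\Omega}$; similarly, rewriting the $u$-equation in the form $\epsilon\Delta u+\bigl(m(x)-u-\frac{F(u)w_\ast}{ud(u)}\bigr)u=0$ with the bracketed coefficient bounded (note $F(u)/u$ is bounded near $0$ by $(H_2)$), the non-trivial $u_\ast\geq0$ must likewise be strictly positive. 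The only mild subtlety I foresee is excluding semi-trivial fixed points of the form $(0,w)$; this is handled directly by the eigenvalue sign $\lambda_1(\mu,-\theta/d(0))=-\theta/d(0)<0$ noted in Lemma~\ref{lem03}. No substantial obstacle remains beyond assembling these ingredients.
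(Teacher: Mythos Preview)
Your proposal is correct and follows essentially the same approach as the paper: a contradiction via the additivity of indices, using the computations from Lemma~\ref{lem08} to obtain $1=0+0=0$. The paper's proof is terser---it does not spell out the exclusion of $(0,w)$-type fixed points or the strong maximum principle argument upgrading to strict positivity---but your added details are accurate and the core argument is identical.
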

\begin{proof}
If system \eqref{KO2} doesn't have any positive solution, by the additivity of indices and Lemma \ref{lem08}, we have
$$
1={\rm{\rm{deg}}}_W(I-T_1,\mathcal{D})=\mathrm{index}_W(T_1,(0,0))+\mathrm{index}_W(T_1,(\tilde{u},0))=0+0=0,
$$
which is impossible. Hence, system  \eqref{KO2} admits at least one positive solution when $(\tilde{u},0)$ is linearly unstable.
\end{proof}

\subsection{Main results}Now it is in a position to state our main results on the existence and non-existence of positive solutions to \eqref{KO2}.
\begin{theorem}\label{thm01}
Given $\epsilon,\alpha>0$, assume $(H_1)$, $(H_2)$ and $(H_3)$ hold. Let $\tilde{u}$ be the unique solution of \eqref{2022062101} and $\theta_0=\frac{\alpha}{|\Omega|}\int_\Omega F(\tilde{u})\mathrm{d}x.$ Then the following results hold.
\noindent \begin{itemize}
\item[(i)]
If $\theta\in[0,\alpha F(\tilde{u}_{\min})]$, then system \eqref{KO2} admits at least  a positive solution.

\item[(ii)]
If $\theta\in[\alpha F(\tilde{u}_{\max}),\infty)$, then system \eqref{KO2} doesn't admit any positive solution.

\item[(iii)] If $\theta\in (\alpha F(\tilde{u}_{\min}), \alpha F(\tilde{u}_{\max}))$ and  $d(u)= d(u;k)$ where $d(u;k)$ is given in \eqref{du}, then the following results follow.
\begin{itemize}
\item[(a)]Fixing all the parameters except $\mu$, if $\theta\in\left(\alpha F(\tilde{u}_{\min}),\theta_k\right]$ with $\theta_k$ defined in \eqref{thetak}, then system \eqref{KO2} admits a positive solution for any $\mu>0$; while if $\theta\in\left(\theta_k,\alpha F(\tilde{u}_{\max})\right)$, then there exits some $\mu^*(\theta)>0$ satisfying $\textstyle \lambda_1\big(\mu^*,\frac{\alpha F(\tilde{u})-\theta}{d(\tilde{u};k)}\big)=0$ such that system \eqref{KO2} admits a positive solution for all ${\mu} \in (0,\mu^*(\theta))$ and the semi-trivial solution $(\tilde{u}, 0)$ is linearly stable  for all ${\mu}>\mu^*(\theta)$.
\item[(b)]Fixing all the parameters except $\mu$ and $k$, we have
\begin{itemize}
\item[(b.1)] If $\theta\in\left(\alpha F(\tilde{u}_{\min}),\theta_0\right]$, then system \eqref{KO2} admits at least  a positive solution for any ${\mu} >0$ and $k\geq0$.
\item[(b.2)]If $\theta\in\left(\theta_0,\alpha F(\tilde{u}_{\max})\right)$, then there exist $k^*(\theta)>0$
satisfying \eqref{2022062802} such that:
\begin{itemize}
\item[(b.2A)] If  $k\geq k^*(\theta)$, then system \eqref{KO2} admits a positive solution for any ${\mu} >0$;
\item[(b.2B)] If  $k\in[0,k^*(\theta))$, then there exists $\mu^*(k)$ satisfying $\lambda_1\big(\mu^*(k),\frac{\alpha F(\tilde{u})-\theta}{d(\tilde{u})}\big)=0$ such that system \eqref{KO2}  admits a positive solution for all ${\mu} \in (0,\mu^*(k))$ and $(\tilde{u}, 0)$ is linearly stable for all ${\mu} >\mu^*(k)$. Furthermore there exist a constant $\tilde{k}(\theta)>0$ so that system \eqref{KO2} has no positive solutions for all $k\in[0,\tilde{k}(\theta)]$ and  ${\mu}>\mu^*(k)$.
\end{itemize}
\end{itemize}
\end{itemize}
\end{itemize}
\end{theorem}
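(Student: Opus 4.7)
The plan is to assemble the theorem from two ingredients already at hand: Lemma~\ref{lem04} (the sharp characterization of when the semi-trivial steady state $(\tilde u,0)$ is linearly stable or unstable) and Lemma~\ref{lem09} (instability of $(\tilde u,0)$ forces a positive solution, via the index computations of Lemma~\ref{lem08}). All existence conclusions will be obtained by matching the parameter regimes of Lemma~\ref{lem04} with Lemma~\ref{lem09}; the non-existence conclusions require separate a~priori arguments that read the second equation of \eqref{KO2} as a principal eigenvalue identity and invoke the a~priori bound $u<\tilde u$ of Lemma~\ref{lem07}.

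I would dispatch (i), the existence halves of (iii.a), (iii.b.1), (iii.b.2A) and (iii.b.2B), and the ``linearly stable'' halves of (iii.a) and (iii.b.2B) immediately: each is a direct pairing of a sub-item of Lemma~\ref{lem04} with Lemma~\ref{lem09}. For instance, \eqref{2022062103} supplies linear instability of $(\tilde u,0)$ for $\theta\in[0,\alpha F(\tilde u_{\min})]$, whence (i); Lemma~\ref{lem04}(i) provides the sharp threshold $\mu^*(\theta)$ for (iii.a); Lemma~\ref{lem04}(ii.1)-(ii.2) provides the thresholds $\theta_0$ and $k^*(\theta)$ for (iii.b.1)-(iii.b.2A) and the existence half of (iii.b.2B).

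For the non-existence in (ii), suppose $(u,w)$ is a positive solution. The second equation of \eqref{KO2} together with $w>0$ forces $w$ to be a positive eigenfunction, whence $\lambda_1\bigl(\mu,\frac{\alpha F(u)-\theta}{d(u)}\bigr)=0$. By Lemma~\ref{lem07}, $u<\tilde u\le\tilde u_{\max}$ pointwise in $\bar\Omega$; combined with $\theta\ge\alpha F(\tilde u_{\max})$ and the strict monotonicity of $F$, one has $\alpha F(u)-\theta<0$ everywhere, and Lemma~\ref{lem01}(iii) gives $\lambda_1\bigl(\mu,\frac{\alpha F(u)-\theta}{d(u)}\bigr)<\lambda_1(\mu,0)=0$, a contradiction.

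The main obstacle is the non-existence in (iii.b.2B), since the weight $\frac{\alpha F(u)-\theta}{d(u;k)}$ is sign-changing there, so a pointwise comparison with the semi-trivial weight is unavailable. My plan is a compactness/perturbation argument anchored at $k=0$. First, at $k=0$ a positive solution would satisfy $\lambda_1(\mu,\alpha F(u)-\theta)=0$; since $u<\tilde u$ strictly and $F$ is strictly increasing, Lemma~\ref{lem01}(iii) yields $\lambda_1(\mu,\alpha F(u)-\theta)<\lambda_1(\mu,\alpha F(\tilde u)-\theta)$, and for $\mu>\mu^*(0)$ the latter is negative by the definition of $\mu^*(0)$, contradiction. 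For $k>0$ small, assume toward contradiction sequences $k_n\to 0^+$, $\mu_n>\mu^*(k_n)$, and positive solutions $(u_n,w_n)$. Lemma~\ref{lem07} supplies uniform $L^\infty$-bounds; elliptic $L^p$- and Schauder estimates plus a Harnack step on the first equation (to keep $u_n$ uniformly bounded away from $0$) yield $C^{1,\alpha}$-precompactness. Passing to a subsequence, $(u_n,w_n)\to(u_\infty,w_\infty)$ in $C^1(\bar\Omega)$ and $\mu_n\to\mu_\infty\in[\mu^*(0),+\infty]$. If $\mu_\infty<+\infty$, then $(u_\infty,w_\infty)$ solves \eqref{KO2} at $(\mu_\infty,0)$, so the $k=0$ non-existence forces $w_\infty\equiv 0$ and $u_\infty\equiv\tilde u$; normalizing $\tilde w_n:=w_n/\|w_n\|_\infty$ and taking the limit in the second equation, $\tilde w_\infty>0$ is a principal eigenfunction of $\mu_\infty\Delta+(\alpha F(\tilde u)-\theta)$ with eigenvalue $0$, forcing $\mu_\infty=\mu^*(0)$, which contradicts $\mu_n>\mu^*(k_n)$ together with continuity of $\mu^*$ at $k=0$ supplied by Lemma~\ref{lem05*}. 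If $\mu_\infty=+\infty$, integration of the second equation yields $\int_\Omega\frac{\alpha F(u_n)-\theta}{d(u_n;k_n)}w_n\,dx=0$; passing to the limit in this identity while using $\theta>\theta_0$ and Proposition~\ref{prop2} produces a contradiction via the sign of $\int_\Omega(\alpha F(\tilde u)-\theta)\,dx$. The delicate points, and therefore the main obstacle, are establishing the uniform positive lower bound on $u_n$ and the careful handling of the $\mu_n\to\infty$ branch.
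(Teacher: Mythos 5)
Most of your proposal matches the paper: items (i), (iii.a), (iii.b.1), (iii.b.2A), and the existence/stability halves of (iii.b.2B) are indeed obtained exactly as you say, by pairing Lemma~\ref{lem04} with Lemma~\ref{lem09}, and your non-existence argument for (ii) is the paper's argument (Krein--Rutman on the second equation, $u<\tilde u$ from Lemma~\ref{lem07}, and the comparison of Lemma~\ref{lem01}(iii)). The problem is the non-existence clause of (iii.b.2B), where you have both misdiagnosed the difficulty and left a genuine gap in your substitute argument.

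First, the pointwise comparison you declare ``unavailable'' is precisely the paper's key device, and it is the reason the constant $\tilde k(\theta)$ appears at all: although the weight $\frac{\alpha F(u)-\theta}{d(u;k)}=(\alpha F(u)-\theta)e^{ku}$ is sign-changing, the scalar function $\mathfrak{h}(s):=(\alpha F(s)-\theta)e^{ks}$ has derivative $[\alpha F'(s)+k(\alpha F(s)-\theta)]e^{ks}$, which by $(H_2)$ is positive on $[0,\tilde u_{\max}]$ for all $k\le\tilde k$ with $\tilde k$ explicit. Monotonicity of $\mathfrak h$ plus $u<\tilde u$ gives $\mathfrak h(u)<\mathfrak h(\tilde u)$ pointwise, hence $0=\lambda_1(\mu,\mathfrak h(u))<\lambda_1(\mu,\mathfrak h(\tilde u))$, while $\mu>\mu^*(k)$ forces $\lambda_1(\mu,\mathfrak h(\tilde u))<\lambda_1(\mu^*(k),\mathfrak h(\tilde u))=0$ --- a contradiction valid uniformly in $\mu>\mu^*(k)$ and $k\le\tilde k$. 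Second, your compactness replacement breaks down exactly where it must not: in the branch $\mu_n\to\mu_\infty<\infty$ you correctly deduce $(u_n,w_n)\to(\tilde u,0)$ and, after normalizing, $\lambda_1(\mu_\infty,\alpha F(\tilde u)-\theta)=0$, i.e.\ $\mu_\infty=\mu^*(0)$; but this does \emph{not} contradict $\mu_n>\mu^*(k_n)$ with $\mu^*(k_n)\to\mu^*(0)$ --- take $\mu_n=\mu^*(k_n)+1/n$. This scenario is precisely what a supercritical bifurcation of positive solutions from $(\tilde u,0)$ at $\mu=\mu^*(k_n)$ would produce, and compactness alone cannot exclude it; one needs either the pointwise monotone comparison above or a local bifurcation-direction computation. (Your $\mu_\infty=+\infty$ branch and the $k=0$ base case are fine, the latter being the $k=0$ instance of the same comparison.) As written, the non-existence assertion in (b.2B) is therefore not proved.
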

\begin{proof}
The results stated in assertions (i), (iii)-(a), and (iii)-(b.1) follow directly from Lemma \ref{lem04} and Lemma \ref{lem09}. For the assertion (ii), we use a contradictive argument by assuming that system \eqref{KO2} admits a positive solution $(u,w)$. From the second equation of system \eqref{KO2}  and the Krein-Rutman Theorem \cite{KreinRutman1948}, it follows that
\begin{equation}\label{2022063001}
\lambda_1\left(\mu,\frac{\alpha F(u)-\theta}{d(u)}\right)=0.
\end{equation}
On the other hand, due to assumptions $\theta\geq \alpha F(\tilde{u}_{\max})$, $(H_1)$ and $(H_2)$, one concludes that
$$
\alpha F(u)-\theta\leq,\not\equiv0\;\;\mbox{in}\;\Omega,
$$
which together with  $\lambda_1(\mu,0)=0$ and Lemma \ref{lem01}-(iii) implies that
$
\lambda_1\big(\mu,\frac{\alpha F(u)-\theta}{d(u)}\big)<0.
$
This contradicts \eqref{2022063001}. Therefore, the results in statement (ii) holds.

Finally we prove the results in the assertion (iii)-(b.2). First the result (b.2A) in the statement (b.2) come from Lemma \ref{lem04} and Lemma \ref{lem09} directly. It remains only to show result (B) in the statement (b.2). Given $k\in[0,k^*(\theta))$, using Lemma \ref{lem01} and $\theta\in (\alpha F(\tilde{u}_{\min}), \alpha F(\tilde{u}_{\max}))$, we have
$$
\lim\limits_{{\mu} \to0}\lambda_1\left({\mu},\frac{\alpha F(\tilde{u})-\theta}{d(\tilde{u};k)}\right)=\max\limits_{x\in\bar{\Omega}}\frac{\alpha F(\tilde{u})-\theta}{d(\tilde{u};k)}>0
$$
and
$$
\lim\limits_{{\mu} \to\infty}\lambda_1\left({\mu},\frac{\alpha F(\tilde{u})-\theta}{d(\tilde{u};k)}\right)=
\frac{\int_\Omega\frac{\alpha F(\tilde{u})-\theta}{d(\tilde{u};k)}\mathrm{d}x}{|\Omega|}<0,\ \mbox{due to $k<k^*(\theta)$} \ \mbox{and} \ \eqref{2022062802}.
$$
These facts combined with Lemma \ref{lem01} (ii) and Lemma \ref{lem09} imply the first part of (B) in (b.2). Next proceed to prove the existence of $\tilde{k}(\theta)$.
To this end, we consider the case $d(u;k)=e^{-ku}$ only and the other case $d(u;k)=(1+u)^{-k}$ can be treated similarly. We define $\mathfrak{h}(x):=(\alpha F(x)-\theta)e^{kx}$, $x\in[0,\tilde{u}_{\max}]$. Then, one has
$$
\mathfrak{h}_x(x)=[\alpha F_x(x)+k(\alpha F(x)-\theta)]e^{kx},
$$
which combined with assumption $(H_2)$ implies that there exists $\tilde{k}>0$, such that
\begin{equation}\label{2022070701}
\mathfrak{h}_x(x)>0,\quad x\in[0,\tilde{u}_{\max}].
\end{equation}
Assume $k\leq \tilde{k}$ and ${\mu} >\mu^*(k)$, we will show that system \eqref{KO2} doesn't admit any positive solution. By contradiction, assume that system \eqref{KO2} admits a positive solution $(u,w)$. From the second equation of \eqref{KO2},
it follows that
$$
\lambda_1({\mu} ,(\alpha F(u)-\theta)e^{ku})=0.
$$
This together with \eqref{2022070701}, Lemma \ref{lem01}, and Lemma \ref{lem07} yields that
$
\lambda_1({\mu} ,(\alpha F(\tilde{u})-\theta)e^{k\tilde{u}})>0,
$
which alongside Lemma \ref{lem01} indicates that
$$
\lambda_1(\mu^*(k),(\alpha F(\tilde{u})-\theta)e^{k\tilde{u}})>0.
$$
This contradicts the definition of $\mu^*(k)$, that is, $\lambda_1(\mu^*(k),(\alpha F(\tilde{u})-\theta)e^{k\tilde{u}})=0$. So, system \eqref{KO2} doesn't admit any positive solution.
\end{proof}


As a direct consequence of Theorem \ref{thm01}, we have the following results for the predator-prey system with random dispersal.
\begin{corollary}\label{thmkzero}
Given $\epsilon,\alpha>0$, assume $d(u)=1$, $(H_1)$ and $(H_2)$ hold. Then the following results hold.
\begin{itemize}
\item[(i)]
If $\theta\in[0,\alpha F(\tilde{u}_{\min})]$, then system \eqref{KO2} admits at least  a positive solution.
\item[(ii)]
If $\theta\in[\alpha F(\tilde{u}_{\max}),\infty)$, then system \eqref{KO2} doesn't admit any positive solution.
\item[(iii)] If $\theta\in (\alpha F(\tilde{u}_{\min}), \alpha F(\tilde{u}_{\max}))$, the following results hold true.

\begin{itemize}
\item[(a)]If $\theta\in\left(\alpha F(\tilde{u}_{\min}),\theta_0\right]$, then system \eqref{KO2} admits at least  a positive solution for any ${\mu} >0$.
\item[(b)]If $\theta\in\left(\theta_0,\alpha F(\tilde{u}_{\max})\right)$, then there exists a constant $\mu^*$  satisfying $\lambda_1(\mu^*,\alpha F(\tilde{u})-\theta))=0$  such that system \eqref{KO2} doesn't admit any positive solution for $\mu>\mu^*$ while admits a positive solution for all $\mu\in (0,\mu^*)$.
\end{itemize}
\end{itemize}
\end{corollary}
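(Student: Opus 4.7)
The plan is to derive this corollary by specializing the machinery of Theorem \ref{thm01} to the constant-motility case $d(u)\equiv 1$, which corresponds formally to $k=0$ in either one-parameter family of \eqref{du}. Although $(H_3)$ explicitly requires $d'\not\equiv 0$, inspection shows that $d$ enters the analysis only as a positive multiplier after the change of variables \eqref{trans}; in particular, the index-theoretic framework of Section 3, the eigenvalue characterization of Lemma \ref{lem04*}, the a priori bound of Lemma \ref{lem07}, and the existence criterion of Lemma \ref{lem09} all remain valid with $d\equiv 1$. Note that \eqref{trans} is the identity in this setting, so \eqref{KO2} coincides with \eqref{2022032801}, and the quantity $\theta_k$ in \eqref{thetak} collapses unambiguously to $\theta_0=\frac{\alpha}{|\Omega|}\int_\Omega F(\tilde{u})\,\mathrm{d}x$.

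Parts (i) and (ii) follow exactly as in Theorem \ref{thm01}(i)-(ii). For (i), the hypothesis $\theta\leq\alpha F(\tilde{u}_{\min})$ places us in the unstable regime \eqref{2022062103} of Lemma \ref{lem04}, so Lemma \ref{lem09} yields coexistence. For (ii), any hypothetical positive solution $(u,w)$ with $\theta\geq\alpha F(\tilde{u}_{\max})$ would, via the second equation and the Krein-Rutman theorem, require $\lambda_1(\mu,\alpha F(u)-\theta)=0$; but $u<\tilde{u}\leq m_{\max}$ from Lemma \ref{lem07} together with $(H_2)$ forces $\alpha F(u)-\theta\leq,\not\equiv 0$ on $\Omega$, and Lemma \ref{lem01}(iii) then gives $\lambda_1<0$, a contradiction.

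For the intermediate range in (iii), I would apply Lemma \ref{lem01}(ii) with weight $r(x)=\alpha F(\tilde{u}(x))-\theta$. Since $\tilde{u}$ is non-constant (by $(H_1)$ and Proposition \ref{prop2}) and $F$ is strictly increasing, $r$ is non-constant too, yielding strict monotonicity of $\lambda_1(\mu,r)$ in $\mu$ with limits $\max_{\bar\Omega}r>0$ as $\mu\to 0$ and $\frac{1}{|\Omega|}\int_\Omega r\,\mathrm{d}x$ as $\mu\to\infty$; the latter is $\geq 0$ precisely when $\theta\leq\theta_0$. Hence if $\theta\in(\alpha F(\tilde{u}_{\min}),\theta_0]$ then $\lambda_1(\mu,r)>0$ for every $\mu>0$, and Lemma \ref{lem09} yields a coexistence state; if $\theta\in(\theta_0,\alpha F(\tilde{u}_{\max}))$, monotonicity plus the intermediate value theorem produce a unique $\mu^*>0$ solving $\lambda_1(\mu^*,\alpha F(\tilde{u})-\theta)=0$, so that $(\tilde{u},0)$ is linearly unstable for $\mu<\mu^*$ (giving existence by Lemma \ref{lem09}) and linearly stable for $\mu>\mu^*$.

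The main obstacle is the nonexistence half of (iii)(b): linear stability of $(\tilde{u},0)$ alone does not a priori rule out positive solutions, and in the full $k$-dependent analysis of Theorem \ref{thm01}(iii)(b.2B) nonexistence was only established on a small-$k$ window $[0,\tilde{k}(\theta)]$. At $k=0$, however, one gains a clean monotonicity that holds without any parameter restriction: the auxiliary function $\mathfrak{h}(x):=\alpha F(x)-\theta$ is strictly increasing on $[0,\tilde{u}_{\max}]$ directly from $(H_2)$. Given a hypothetical positive solution $(u,w)$ for some $\mu>\mu^*$, the second equation forces $\lambda_1(\mu,\alpha F(u)-\theta)=0$; but $u<\tilde{u}$ pointwise on $\bar\Omega$ by Lemma \ref{lem07}, combined with the strict monotonicity of $\mathfrak{h}$, gives $\alpha F(u)-\theta\leq,\not\equiv\alpha F(\tilde{u})-\theta$, whence Lemma \ref{lem01}(iii) yields $\lambda_1(\mu,\alpha F(\tilde{u})-\theta)>0$. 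This contradicts the linear stability of $(\tilde{u},0)$ for $\mu>\mu^*$ established above, and thereby closes the corollary.
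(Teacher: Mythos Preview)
Your proposal is correct and follows essentially the same route as the paper: the corollary is stated as a direct consequence of Theorem \ref{thm01}, obtained by setting $k=0$ in the family \eqref{du}, and your detailed write-up simply unpacks that specialization (including the nonexistence half of (iii)(b), which is precisely the $k=0$ instance of the argument in (b.2B) since $\mathfrak{h}(x)=\alpha F(x)-\theta$ is strictly increasing by $(H_2)$, so $0\in[0,\tilde{k}(\theta)]$). Your remark that the strict-decrease clause of $(H_3)$ is not actually used in the relevant lemmas is a helpful clarification the paper leaves implicit.
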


\begin{remark}{\em We have several remarks in connection with the results of Theorem \ref{thm01}.
\begin{itemize}
\item  Comparing the results of Theorem \ref{thm01} with those of Corollary \ref{thmkzero}, we see that the density-dependent dispersal will have no impact on the species coexistence when the predator's death rate $\theta>0$ is small (i.e. $\theta\leq\alpha F(\tilde{u}_{\min})$) or large (i.e. $\theta\geq\alpha F(\tilde{u}_{\max})$). However if the value of $\theta$ is moderate (i.e. $\theta\in (\alpha F(\tilde{u}_{\min}), \alpha F(\tilde{u}_{\max}))$), the density-dependent dispersal will have evident impact on the species coexistence. Considering the case $d(u)=e^{-ku}$ or $(1+u)^{-k}$ with $k\geq 0$, the results stated in (iii)-(a) of Theorem \ref{thm01} can be illustrated in Figure \ref{fig1}(a) and Figure \ref{fig1}(b) where we see that the parameter regions of $(\theta, \mu)$ for the existence of positive solutions (i.e. $\lambda_1>0$) increases as $k$ increases. This implies that the density-dependent dispersal will increase the chance of species coexistence. The result in  (iii)-(b) of Theorem \ref{thm01} gives another way of understanding the impact of density-dependent dispersal, where for given $\theta\in(\theta_0,\alpha F(\tilde{u}_{\max}))$ coexistence (positive) solutions exist only if $0<\mu<\mu^*(0)$ when $k=0$ (see (iii)-(b) of Corollary \ref{thmkzero}) while exit for any $\mu>0$  when $k>k^*(\theta)$ (see (iii)-(b.2) in Theorem \ref{thm01}), as illustrated in Figure \ref{fig1}(c). For $k\in [0, k^*(\theta))$ and in dimension one, we have shown that $\mu^*(k)$ increases with respect to $k$ (see Lemma \ref{lem06*}) and hence the range of $\mu$ for the coexistence (i.e. $\lambda_1>0$) increases as $k$ increases (see Figure \ref{fig1}(c)). This is also verified by our numerical simulations shown in Figure \ref{fig2}.
    \begin{figure}[t]
\centering
\includegraphics[width=16cm, height=6cm]{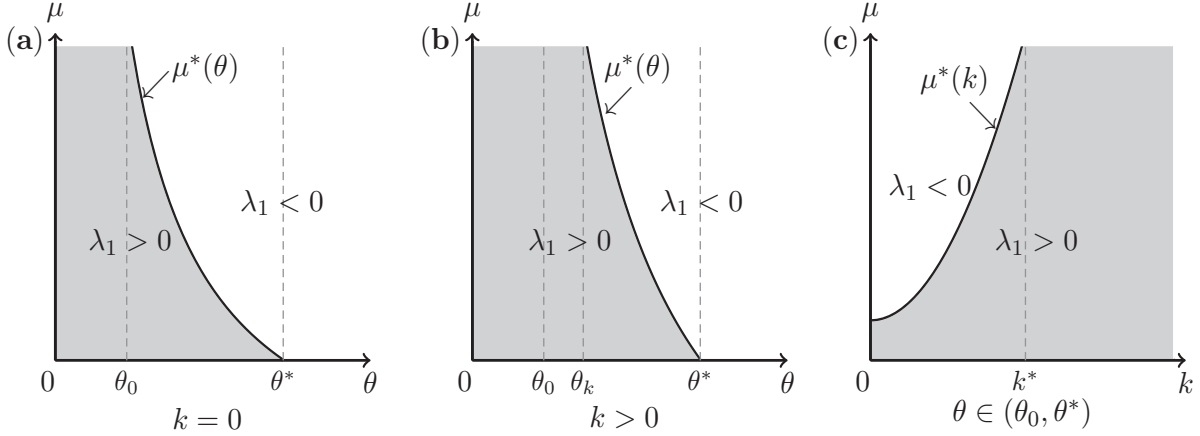}

\caption{Illustration of parameter regimes (shaded regions) for existence of positive solutions to \eqref{KO2} (i.e. $\lambda_1>0$), where $\theta^*=\alpha F(\tilde{u}_{\max})$ and $\theta_k=\frac{\alpha \int_\Omega\frac{ F(\tilde{u})}{d(\tilde{u};k)}\mathrm{d}x}{\int_\Omega\frac{1}{d(\tilde{u};k)}\mathrm{d}x}$ with $d(u;k)=e^{-ku}$ or $d(u;k)=(1+u)^{-k}, k\geq 0$.}
\label{fig1}
\end{figure}
\begin{figure}[h]
\centering
\includegraphics[width=7cm, height=5.5cm]{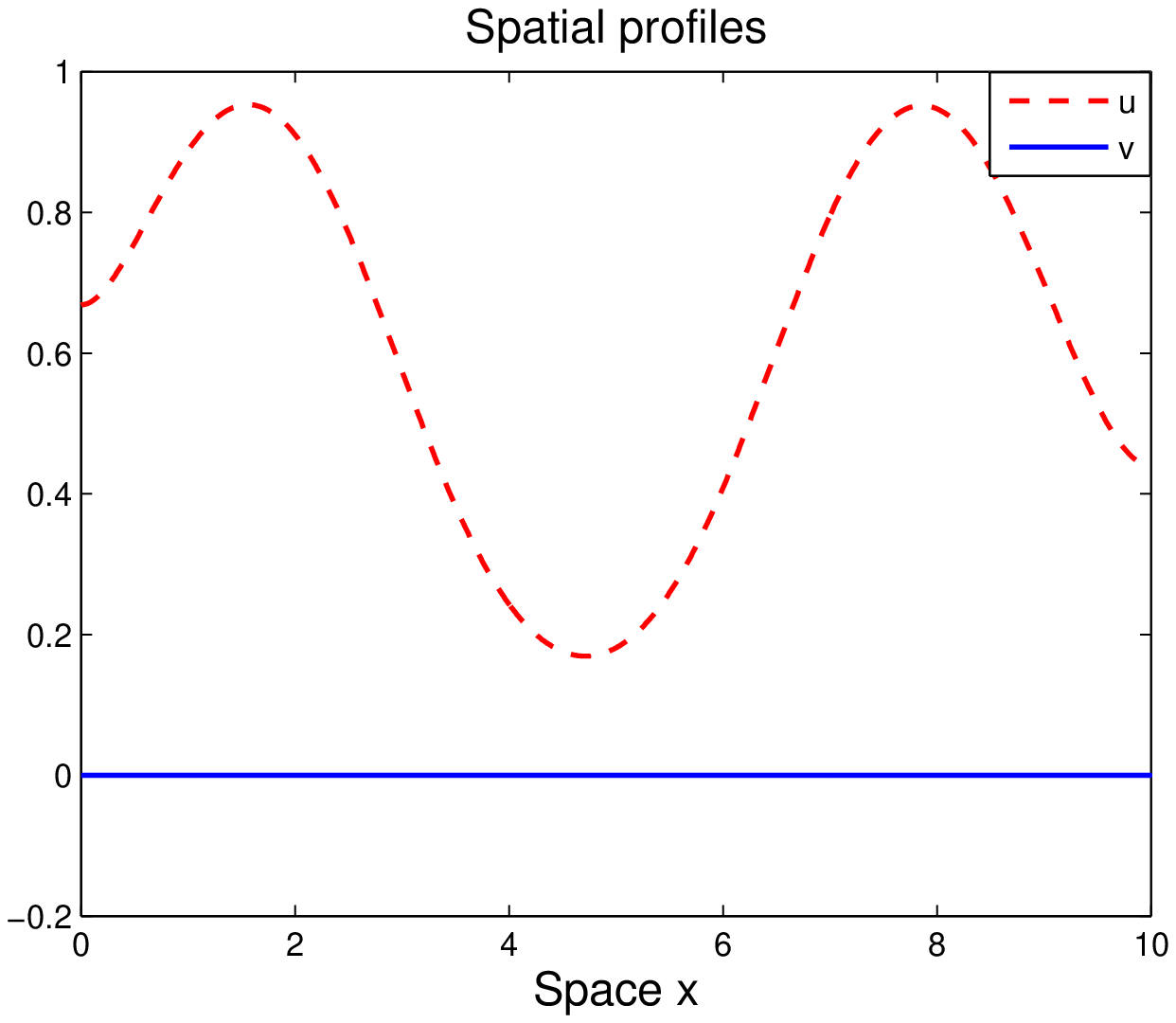}
\includegraphics[width=7cm, height=5.5cm]{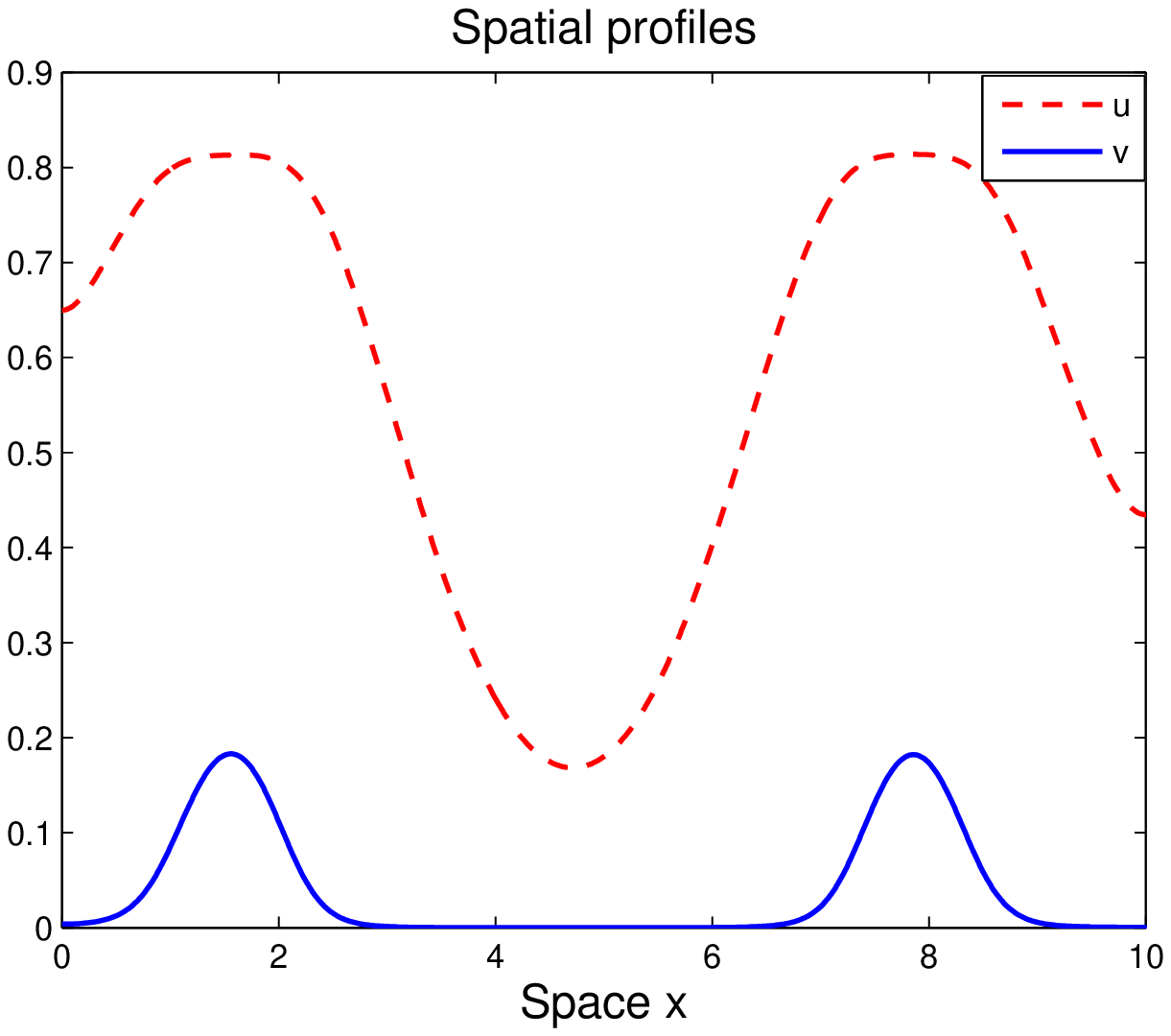}

\hspace{1cm} (a) $d(u)=1$ \hspace{4.8cm} (b) $d(u)=e^{-8u}$

\caption{Numerical simulations of steady state profiles of system \eqref{KO} with random dispersal shown in (a) and density-dependent dispersal shown in (b), where $\epsilon=0.1, \mu=10,  \alpha=1, \theta=0.8, F(u)=u, m(x)=0.5+0.5\sin(x)$.
 The dispersal rate function $d(u)$ is chosen as indicated in the Figure.
}
\label{fig2}
\end{figure}
\item The constant $\tilde{k}$ in (b.2B) of Theorem \ref{thm01} can be explicitly determined for specific1 $F(u)$. For instance, we can choose
$$
\tilde{k}=\begin{cases}
\frac{\alpha}{\theta},&\mbox{if}\;F(u)=u,\\
\frac{\alpha}{\theta(1+\tilde{u}_{max})^2},&\mbox{if}\;F(u)=\frac{u}{1+u}.
\end{cases}
$$
\end{itemize}
}
\end{remark}

\section{Uniqueness and asymptotic profiles}
In this section, we are devoted to investigating the uniqueness and asymptotic profiles of solutions of \eqref{KO2}  as $\varepsilon \to \infty$ (fast prey diffuse) as well as $\mu \to \infty/0$ (fast/slow predator diffusion).

\subsection{Fast prey diffusion}
\begin{theorem}\label{thm5}
Suppose that $(H_1)$, $(H_2)$ and $(H_3)$ hold. Let $\bar{m}=\frac{1}{|\Omega|}\int_\Omega m(x)\mathrm{d}x$. If $\epsilon>0$ is sufficiently large, then  the following results hold.
\begin{itemize}
\item[(i)]
If $\theta>\alpha F(\bar{m})$, then system \eqref{KO2} doesn't have any positive solution;
\item[(ii)]
If $0<\theta<\alpha F(\bar{m})$,  then system \eqref{KO2} admits a unique positive solution.
\end{itemize}
\end{theorem}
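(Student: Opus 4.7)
The plan is to exploit the fact that as $\epsilon\to\infty$, the heterogeneity in the first equation of \eqref{KO2} is averaged out and the prey density is forced to behave like a spatial constant. First I would collect uniform estimates: Lemma~\ref{lem07} yields $0<u<\tilde u\le m_{\max}$ and $0\le w\le c_0$ independently of $\epsilon$, so the first equation gives $\|\Delta u\|_{L^\infty}\le C/\epsilon$. Standard $L^p$ elliptic theory then provides $\|u-\bar u\|_{W^{2,p}}\to 0$, and together with bounded-coefficient estimates for the $w$-equation, I can extract along any subsequence $\epsilon_n\to\infty$ a limit $(u^*,w^*)$ in $C^{1,\alpha}(\bar\Omega)$, where $u^*$ is a nonnegative constant and $w^*$ is a nonnegative solution of $\mu\Delta w^*+\frac{\alpha F(u^*)-\theta}{d(u^*)}w^*=0$ with Neumann data.

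For part~(i), I would argue by contradiction: suppose positive solutions exist for arbitrarily large $\epsilon$. The Krein--Rutman theorem applied to the second equation of \eqref{KO2} forces $\lambda_1\bigl(\mu,\frac{\alpha F(u)-\theta}{d(u)}\bigr)=0$ along the sequence. Passing to the limit via Lemma~\ref{lem01}(i) and noting that $\frac{\alpha F(u^*)-\theta}{d(u^*)}$ is now constant, I obtain $\alpha F(u^*)=\theta$. However $u<\tilde u$ together with $\tilde u\to\bar m$ (Proposition~\ref{prop2}) yields $u^*\le\bar m$, and the monotonicity of $F$ gives $\theta=\alpha F(u^*)\le\alpha F(\bar m)$, contradicting the hypothesis $\theta>\alpha F(\bar m)$.

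For the existence half of~(ii), since $\tilde u\to\bar m$ in $L^\infty$, continuity of the principal eigenvalue together with Lemma~\ref{lem01}(ii) gives $\lambda_1\bigl(\mu,\frac{\alpha F(\tilde u)-\theta}{d(\tilde u)}\bigr)\to\frac{\alpha F(\bar m)-\theta}{d(\bar m)}>0$ as $\epsilon\to\infty$. Hence $(\tilde u,0)$ is linearly unstable for sufficiently large $\epsilon$, and Lemma~\ref{lem09} produces a positive solution. The same compactness pins down its limit: $w^*$ must be strictly positive (otherwise averaging the first equation forces $u^*\in\{0,\bar m\}$, both excluded since either would contradict $\lambda_1\bigl(\mu,\frac{\alpha F(u^*)-\theta}{d(u^*)}\bigr)=0$ given $\theta<\alpha F(\bar m)$), so $u^*=F^{-1}(\theta/\alpha)\in(0,\bar m)$ and $w^*$ is the unique positive constant determined by the integrated balance $u^*(\bar m-u^*)=\frac{F(u^*)}{d(u^*)}w^*$.

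The main obstacle is the uniqueness claim in part~(ii), which I would establish by a normalized-difference contradiction argument. Assume, for some sequence $\epsilon_n\to\infty$, there exist two distinct positive solutions $(u_i^n,w_i^n)$, $i=1,2$; by the previous paragraph both converge to the same limit $(u^*,w^*)$. Setting $s_n=\|u_1^n-u_2^n\|_{L^\infty}+\|w_1^n-w_2^n\|_{L^\infty}$ and $\hat U^n=(u_1^n-u_2^n)/s_n$, $\hat W^n=(w_1^n-w_2^n)/s_n$, a mean-value expansion yields a linear system for $(\hat U^n,\hat W^n)$ with uniformly bounded coefficients. The bound $\|\Delta \hat U^n\|_{L^\infty}\le C/\epsilon_n$ forces the $C^{1,\alpha}$-limit $\hat U^*$ to be constant, while $\hat W^*$ solves
\begin{equation*}
\mu\Delta \hat W^*+\frac{\alpha F'(u^*)}{d(u^*)}\,w^*\hat U^*=0
\end{equation*}
(the zeroth-order term in $\hat W^n$ drops out because $\alpha F(u^*)=\theta$). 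Integrating over $\Omega$ with Neumann boundary conditions and using $F'(u^*),w^*>0$ gives $\hat U^*=0$, whence $\hat W^*$ is harmonic hence constant; plugging $\hat U^*=0$ back into the integrated limit of the first difference equation then yields $\int_\Omega \hat W^*\,dx=0$, so $\hat W^*\equiv 0$. This contradicts the normalization $\|\hat U^n\|_{L^\infty}+\|\hat W^n\|_{L^\infty}=1$ and completes the proof.
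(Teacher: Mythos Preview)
Your argument is correct. Parts~(i) and the existence/limit-identification half of~(ii) match the paper's proof almost exactly, the only cosmetic difference being that in~(i) you invoke Krein--Rutman plus continuity of $\lambda_1$ to conclude $\alpha F(u^*)=\theta$, whereas the paper integrates the $w$-equation directly; these are equivalent.

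The genuine divergence is in the uniqueness proof for~(ii). The paper recasts \eqref{KO2} as a zero of an operator $\mathcal{L}_1(\xi,\zeta,w,\gamma)$ with $\gamma=1/\epsilon$, verifies that the Fr\'echet derivative $D_{(\xi,\zeta,w)}\mathcal{L}_1$ at the limit point $(c,0,w_c,0)$ is an isomorphism, and then applies the implicit function theorem to obtain a locally unique smooth branch $(u_\gamma,w_\gamma)$, which combined with the convergence of \emph{all} positive solutions to $(c,w_c)$ forces uniqueness for large $\epsilon$. Your normalized-difference argument is the contrapositive of the same linear-algebraic fact: the system you derive for $(\hat U^*,\hat W^*)$ is precisely the kernel equation for that Fr\'echet derivative, and showing $(\hat U^*,\hat W^*)=(0,0)$ is equivalent to the paper's non-degeneracy check. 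What each buys: the implicit function theorem yields more structure (a $C^1$ solution branch in $\gamma$), while your compactness-contradiction route is more elementary, avoids setting up the abstract operator framework, and would adapt more readily if one only needed uniqueness without regularity of the branch.
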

\begin{proof}
For the assertion (i), arguing by contradiction, we suppose that system \eqref{KO2} admits a positive solution $(u^i,w^i)$ with $\epsilon=\epsilon^i$, where $\epsilon^i\to+\infty$ as $i\to\infty$. Then $\epsilon^i$ and $(u^i,w^i)$ satisfy
\begin{equation}\label{2022090301}
\begin{cases}
\epsilon^i\Delta u^i+u^i(m(x)-u^i)-\frac{F(u^i)}{d(u^i)}w^i=0, &\mbox{in}\;\Omega,\\
\mu\Delta w^i+\frac{\alpha F(u^i)-\theta}{d(u^i)}w^i=0, &\mbox{in}\;\Omega,\\
\nabla u^i \cdot \n=\nabla w^i \cdot \n=0,&\mbox{on}\;\partial\Omega.
\end{cases}
\end{equation}
Using Lemma \ref{lem07}, for any $i\geq 1$, one has
$$
\|u^i\|_{L^{\infty}(\Omega)}\leq m_{\max}\;\;
\mbox{and}\;\;
\|w^i\|_{L^{\infty}(\Omega)}\leq c_0.
$$
Applying the elliptic regularity (cf. \cite{GilbargTrudinger2001}), we have $\|u^i\|_{W^{2, p}(\Omega)}$ and $\|w^i\|_{W^{2, p}(\Omega)}$ are uniformly bounded for any $1< p<\infty$ and $i\geq1$.
By the Sobolev imbedding theorem, one can deduce from (\ref{2022090301}) that $(u^i,w^i)$, passing to a subsequence if necessary, converges to some nonnegative function $(u^\infty,w^\infty)$ in  $C^1(\Omega)$ as $i\to\infty$, where $(u^\infty,w^\infty)$ satisfies (in the weak sense)
\begin{equation}\label{2022090302}
\begin{cases}
\Delta u_{\infty}=0, &\mbox{in}\;\Omega,\\
\mu\Delta w_{\infty}+ w_{\infty}\frac{\alpha F(u_{\infty})-\theta}{d(u_{\infty})}=0, &\mbox{in}\;\Omega,\\
\nabla u^\infty \cdot \n=\nabla w^\infty \cdot \n=0,&\mbox{on}\;\partial\Omega.
\end{cases}
\end{equation}
From Proposition \ref{prop2} (ii), Lemma \ref{lem07}, $(H_2)$, and the assumption $\theta>\alpha F(\bar{m})$, it follows that
$$
\alpha F(u^\infty)-\theta\leq \alpha F(\bar{m})-\theta<0,
$$
which indicates that
\begin{equation}\label{2022090303}
\alpha F(u^i)-\theta<0,\quad\mbox{when $i$ is large}.
\end{equation}
Integrating the second equation of \eqref{2022090301} on $\Omega$, one obtains
$$
\int_\Omega\frac{\alpha F(u^i)-\theta}{d(u^i)}w^i\mathrm{d}x=0,
$$
which contradicts \eqref{2022090303} when $i$ is large. Hence the results in assertion (i) are obtained.

For assertion (ii), by Proposition \ref{prop2} (ii) and Lemma \ref{lem01} (i), one has
$$
\lim\limits_{\epsilon\to\infty}\lambda_1\left(\mu,\frac{\alpha F(\tilde{u}_\epsilon)-\theta}{d(\tilde{u}_\epsilon)}\right)=\lambda_1\left(\mu,\frac{\alpha F(\bar{m})-\theta}{d(\bar{m})}\right)=\frac{\alpha F(\bar{m})-\theta}{d(\bar{m})}>0,
$$
where $\tilde{u}_\epsilon$ denotes the unique positive solution of \eqref{2022062101}.
This combined with Lemma \ref{lem09} implies that, there exits some large $\epsilon^*$ such that system  \eqref{KO2} admits at least  one positive solution for $\epsilon\geq \epsilon^*$. For $\epsilon^i\geq \epsilon^*$ satisfying  $\epsilon^i\to+\infty$ as $i\to\infty$, we will prove that any positive solution $(u^i,w^i)$ of system \eqref{KO2} with $\epsilon=\epsilon^i$ satisfies that
\begin{equation}\label{2022090304}
\mbox{$(u^i,w^i)$  converge to $(c,w_c)$ in $C^1(\Omega)$ as $i\to\infty$},
\end{equation}
where $c=F^{-1}(\theta/\alpha)$ and $w_c=\frac{c\alpha d(c)}{\theta|\Omega|}\int_\Omega(m-c)\mathrm{d}x$. Here $F^{-1}(\cdot)$ denotes the inverse of $F(\cdot)$. Following the approach as that in the proof of assertion (i), it suffices to show that $(u^\infty,w^\infty)=(c,w_c)$. From the first equation of \eqref{2022090302}, it follows that  $u^\infty=c_1$ for some constant $c_1>0$. Let $\hat{w}^i:=\frac{w^i}{\|w^i\|_{L^\infty}}$. Then, $\hat{w}^i$ satisfies
\begin{equation}\label{2022090305}
\begin{cases}
\mu\Delta \hat{w}^i+\frac{\alpha F(u^i)-\theta}{d(u^i)}\hat{w}^i=0, &\mbox{in}\;\Omega,\\
\nabla \hat{w}^i \cdot \n=0,&\mbox{on}\;\partial\Omega.
\end{cases}
\end{equation}
We may assume that $\hat{w}^i\to \hat{w}^{\infty}$ in $C^1(\Omega)$ as $i\to\infty$ (passing
to a subsequence if necessary). Integrating the second equation of \eqref{2022090305} on $\Omega$ and letting $i\to\infty$, we have
$$
\int_\Omega\frac{\alpha F(c_1)-\theta}{d(c_1)}\hat{w}^{\infty}\mathrm{d}x=0,
$$
which along with the facts $\hat{w}^{\infty}\geq0$ and $\|\hat{w}^{\infty}\|_{L^\infty}=1$ implies that $c_1=F^{-1}(\theta/\alpha).$  This together with \eqref{2022090302} yields that
\begin{equation}\label{2022090306}
w^\infty= c_2\geq0.
\end{equation}
Integrating the first equation of \eqref{2022090305} on $\Omega$ and letting $i\to\infty$, one has
$$
\int_\Omega \bigg(c(m-c)-\frac{F(c)}{d(c)}w^\infty\bigg)\mathrm{d}x=0,
$$
which combined with  \eqref{2022090306} indicates that $w^\infty=\frac{c\alpha d(c)}{\theta|\Omega|}\int_\Omega(m-c)\mathrm{d}x$. Hence, \eqref{2022090304} holds.

Define $\mathcal{L}_1: \mathbb{R}\times \bar{H}_0^{2}(\Omega)\times H_n^2(\Omega)\times [0,+\infty)\to \mathbb{R}\times \bar{L}^2(\Omega)\times L^2(\Omega)$  by
$$
\begin{aligned}
&\mathcal{L}_1(\xi,\zeta,w,\gamma)=\\
&
\begin{pmatrix}
\frac{1}{|\Omega|}\int_\Omega\left((\xi+\zeta)(m(x)-\xi-\zeta)-\frac{F(\xi+\zeta)}{d(\xi+\zeta)}w\right)\mathrm{d}x\\[6pt]	\Delta \zeta+\gamma\left[(\xi+\zeta)(m(x)-\xi-\zeta)-\frac{F(\xi+\zeta)}{d(\xi+\zeta)}w-\frac{1}{|\Omega|}\int_\Omega\left((\xi+\zeta)(m(x)-\xi-\zeta)-\frac{F(\xi+\zeta)}{d(\xi+\zeta)}w\right)\mathrm{d}x\right]\\
\mu\Delta w+\frac{\alpha F(\xi+\zeta)-\theta}{d(\xi+\zeta)}w
\end{pmatrix},
\end{aligned}
$$
where $H_n^2(\Omega)=\{u\in H^2(\Omega)|\nabla u\cdot \n=0\;\mbox{on}\;\partial\Omega\}$, $\bar{H}_0^2(\Omega)=\{u\in H_n^2(\Omega)|\int_\Omega u\mathrm{d}x=0\}$, and $\bar{L}^2(\Omega)=\{u\in L^2(\Omega)|\int_\Omega u\mathrm{d}x=0\}$.
Then, one has
$$
\begin{aligned}
&D_{(\xi,\zeta,w)}{\mathcal{L}_1}|_{(\xi,\zeta,w,\gamma)=(c,0,w_c,0)}(\phi,\psi,\eta)\\
&\quad\quad=
\begin{pmatrix}
\frac{1}{|\Omega|}\int_\Omega\left((m-2c)(\phi+\psi)-\frac{F(c)}{d(c)}\eta-\frac{F'(c)d(c)-d'(c)F(c)}{d^2(c)}w_c(\phi+\psi)\right)\mathrm{d}x\\[6pt]	\Delta \psi\\
\mu\Delta\eta+\frac{\alpha F'(c)}{d(c)}w_c(\phi+\psi)
\end{pmatrix},
\end{aligned}
$$
where $c=F^{-1}(\theta/\alpha)$ and $w_c=\frac{c\alpha d(c)}{\theta|\Omega|}\int_\Omega(m-c)\mathrm{d}x$.

{\bf Claim}: $D_{(\xi,\zeta,w)}{\mathcal{L}_1}|_{(\xi,\zeta,w,\gamma)=(c,0,w_c,0)}$ is non-degenerate. To show this, it amounts to show that problem
\begin{equation}\label{2022112901*}
\begin{cases}
\frac{1}{|\Omega|}\int_\Omega\left((m-2c)(\phi+\psi)-\frac{F(c)}{d(c)}\eta-\frac{F'(c)d(c)-d'(c)F'(c)}{d^2(c)}w_c(\phi+\psi)\right)\mathrm{d}x=0,\\
\Delta \psi=0,&\mbox{in}\;\Omega,\\
\mu\Delta\eta+\frac{\alpha F(c)-\theta}{d(c)}\eta+\frac{\alpha F'(c)d(c)-(\alpha F(c)-\theta)d'(c)}{d^2(c)}w_c(\phi+\psi)=0,&\mbox{in}\;\Omega,
\end{cases}
\end{equation}
only has the trivial solution $(0,0)$ in $\mathbb{R}\times \bar{H}_0^{2}(\Omega)\times H_n^2(\Omega)$. From the second equation of \eqref{2022112901*} and the definition of ${H}_0^{2}(\Omega)$, it follows that $\psi\equiv0$. Integrating the third equation of \eqref{2022112901*}, one obtains
$$
\int_\Omega\frac{\alpha F'(c)}{d(c)}w_c\phi\mathrm{d}x=0,
$$
which together with $(H_2)$, $(H_3)$, $\phi\in \mathbb{R}$ and $w_c=\frac{c\alpha d(c)}{\theta|\Omega|}\int_\Omega(m-c)\mathrm{d}x>0$, implies
$
\phi=0.
$
This combined with the first and third equations of \eqref{2022112901*} gives that
$
\eta\equiv0.
$
So, the claim holds.

From the above claim and the implicit function theorem, it follows that there exists a neighborhood $\mathcal{U}_1\in\mathbb{R}\times \bar{H}_0^{2}(\Omega)\times H_n^2(\Omega)$ containing $(c,0,w_c)$ and a function $(\xi_\gamma,\zeta_\gamma,w_\gamma)$ defined for all $\gamma$ close to zero such that if $(\xi,\zeta,w)\in\mathcal{U}_1$ is a solution  of $\mathcal{L}_1(\xi,\zeta,w,\gamma)=0$ for some $\gamma$ close to zero, then we must have that $(u,w)=(\xi_\gamma+\zeta_\gamma,w_\gamma)$ is a positive solution of \eqref{KO2}. This along with  \eqref{2022090304} shows that \eqref{KO2} admits a unique positive solution when $\epsilon$ is large, and hence completes the proof.
\end{proof}

\subsection{Large/small predator diffusion}In this section, we shall investigate the uniqueness and asymptotic profile of solutions to \eqref{KO2} as $\mu \to \infty$ and $\mu \to 0$. First we define
$$g(u)=\displaystyle \frac{\int_\Omega{F(u)}{d^{-1}(u)}\mathrm{d}x}{\int_\Omega{d^{-1}(u)}\mathrm{d}x},$$
where $u\in C(\Omega;[0,+\infty))$. On top of assumptions $(H_2)$ and $(H_3)$, we impose two additional assumptions:
\begin{enumerate}
\item[($H_4$)]\quad $g'(u)>0$ for any $u\in C(\Omega;[0,+\infty))$, where $g'(u)$ denotes the Frechet derivative.
\item[($H_5$)]\quad $\left(\frac{F(u)}{ud(u)}\right)'\geq0$ for any $u\geq0$, where $'$ denotes the differentiation with respect to $u$.
\end{enumerate}
We give some examples where $(H_4)$ or $(H_5)$ holds. If $F(u)$ satisfies $(H_2)$ and $d(u)=e^{-ku}$ (or $(1+u)^{-k}$) for any $k\geq0$, then $(H_4)$ holds, see Lemma \ref{lem01*} and Proposition \ref{prop01*} for the proof.
If $F(u)=u$ and $d(u)$ satisfies $(H_3)$, or $F(u)=\frac{u}{1+u}$ (Holling-II) and $d(u)=(1+u)^{-k}$ (or $e^{-ku}$) with  $k\geq1$, then $(H_5)$ holds.
\begin{theorem}\label{thm4}
Assume $(H_1)$, $(H_2)$, $(H_3)$ and $(H_4)$ hold. Then the following results hold true.
\begin{itemize}
\item[(i)]
{If $\theta>\alpha g(\tilde{u})$, then system \eqref{KO2} doesn't have any positive solution when $\mu$ is large};
\item[(ii)]
{If $0<\theta<\alpha g(\tilde{u})$ and $(H_5)$ holds, then any positive solution of system \eqref{KO2} will converge to $(u^*,c^*)$ in $C^1(\Omega)$ as $\mu\to\infty$, where $c^*$ is a positive constant and  $(u^*,c^*)$ is the unique positive solution of \begin{equation}\label{2022083103}
\begin{cases}
\epsilon\Delta u^*+u^*(m(x)-u^*)-\frac{F(u^*)}{d(u^*)}c^*=0, &\mbox{in}\;\Omega,\\
\nabla u^* \cdot \n=0,&\mbox{on}\;\partial\Omega,\\
\int_\Omega\frac{\alpha F(u^*)-\theta}{d(u^*)}\mathrm{d}x=0.
\end{cases}
\end{equation}
Moreover, if $F(u)=u$, $d(u)=e^{-ku}$ (or $(1+u)^{-k}$) with $k\in[0,\frac{\alpha}{\theta}]$, then system \eqref{KO2} admits a unique positive solution when $\mu$  is large}.
\end{itemize}
\end{theorem}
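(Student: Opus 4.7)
The plan is to treat both parts by a sequential compactness argument along $\mu_i\to\infty$ with positive solutions $(u_i,w_i)$ of \eqref{KO2}, and to deduce the global uniqueness claim at large $\mu$ from an implicit function theorem (IFT) argument near the limit.

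By Lemma \ref{lem07} the family $(u_i,w_i)$ is uniformly bounded in $L^\infty$. Writing the second equation as $\Delta w_i=-\mu_i^{-1}\tfrac{\alpha F(u_i)-\theta}{d(u_i)}w_i$, elliptic $L^p$-regularity gives uniform $W^{2,p}$-bounds on $u_i$ and $w_i$; along a subsequence $(u_i,w_i)\to(u_\infty,w_\infty)$ in $C^1(\bar\Omega)$, $w_\infty\equiv c\geq 0$ is constant (since $\Delta w_\infty=0$ under the Neumann condition), and $u_\infty$ satisfies
\begin{equation*}
\epsilon\Delta u_\infty+u_\infty(m-u_\infty)-\tfrac{F(u_\infty)}{d(u_\infty)}c=0\ \text{in}\ \Omega,\quad\nabla u_\infty\cdot\n=0\ \text{on}\ \partial\Omega.
\end{equation*}
Integrating the original second equation gives, in the limit, $c\int_\Omega\tfrac{\alpha F(u_\infty)-\theta}{d(u_\infty)}\,\mathrm{d}x=0$. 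If $c=0$ then $u_\infty=\tilde u$, and the rescaling $\hat w_i:=w_i/\|w_i\|_{L^\infty}$ (which solves the analogous equation with RHS vanishing as $\mu_i^{-1}$) converges to $1$ in $C^1$, so integration of the rescaled equation forces $g(\tilde u)=\theta/\alpha$.

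For part (i), the case $c=0$ contradicts $\theta>\alpha g(\tilde u)$ directly. If $c>0$, the limit equation forces $u_\infty\not\equiv\tilde u$ (otherwise $cF(\tilde u)/d(\tilde u)\equiv 0$); together with $u_i<\tilde u$ (Lemma \ref{lem07}) and $C^1$-convergence this gives $u_\infty\leq\tilde u$ and $u_\infty\not\equiv\tilde u$, so $(H_4)$ yields $g(u_\infty)<g(\tilde u)<\theta/\alpha$, contradicting $g(u_\infty)=\theta/\alpha$. For the convergence in (ii) the same dichotomy applies, but now $\theta<\alpha g(\tilde u)$ rules out $c=0$, so $w_\infty\equiv c^*>0$ and $u_\infty=u^*$ solves \eqref{2022083103}. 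Uniqueness of $(u^*,c^*)$ proceeds in two steps: for fixed $c$, $(H_5)$ makes the per-capita growth $(m-u)-cF(u)/(ud(u))$ strictly decreasing in $u$, so the elliptic problem has at most one positive solution by the standard dividing-subtracting-integrating argument; and if two pairs satisfy $c_1^*>c_2^*$, a comparison principle applied to the first equation (where $c_1^*$ provides a larger sink) gives $u_1^*\leq u_2^*$ with strict inequality somewhere, whence $(H_4)$ yields $g(u_1^*)<g(u_2^*)$, contradicting $g(u_1^*)=g(u_2^*)=\theta/\alpha$. Since every $C^1$-limit equals this unique pair, the whole family converges.

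For uniqueness of positive solutions of \eqref{KO2} at large $\mu$, I would mimic the IFT construction of Theorem \ref{thm5}, trading $\epsilon\to\infty$ for $\gamma:=1/\mu\to 0$. Decomposing $w=\xi+\zeta$ with $\xi=\tfrac{1}{|\Omega|}\int_\Omega w\in\mathbb{R}$ and $\zeta\in\bar H_0^2(\Omega)$, I would define
\begin{equation*}
\mathcal{L}_2:H_n^2(\Omega)\times\mathbb{R}\times\bar H_0^2(\Omega)\times[0,\infty)\to L^2(\Omega)\times\mathbb{R}\times\bar L^2(\Omega)
\end{equation*}
whose three slots encode the first equation of \eqref{KO2}, the integral constraint $\int_\Omega\tfrac{\alpha F(u)-\theta}{d(u)}(\xi+\zeta)\,\mathrm{d}x=0$, and the rescaled second equation $\Delta\zeta+\gamma[\cdots]$, so that $\mathcal{L}_2(u^*,c^*,0,0)=0$. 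Verifying that $D_{(u,\xi,\zeta)}\mathcal{L}_2$ at this base point is an isomorphism then lets the IFT produce a unique zero of $\mathcal{L}_2$ for all small $\gamma$, which combined with the $C^1$-convergence already established upgrades to global uniqueness of positive solutions for $\mu\gg 1$.

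The main obstacle is this non-degeneracy check for the concrete choices $F(u)=u$ and $d(u)=e^{-ku}$ or $(1+u)^{-k}$ with $k\in[0,\alpha/\theta]$. The $\zeta$-slot of the linearization gives $\Delta\psi=0$, hence $\psi\equiv 0$ in $\bar H_0^2(\Omega)$, and injectivity reduces to ruling out nonzero $(\phi,\phi_\xi)\in H_n^2(\Omega)\times\mathbb{R}$ solving
\begin{equation*}
\epsilon\Delta\phi+\Bigl[(m-2u^*)-\bigl(\tfrac{F(u^*)}{d(u^*)}\bigr)' c^*\Bigr]\phi=\tfrac{F(u^*)}{d(u^*)}\phi_\xi,\quad\int_\Omega\Bigl[\bigl(\tfrac{\alpha F(u^*)-\theta}{d(u^*)}\bigr)' c^*\phi+\tfrac{\alpha F(u^*)-\theta}{d(u^*)}\phi_\xi\Bigr]\mathrm{d}x=0.
\end{equation*}
Because $(H_5)$ yields $\lambda_1(\epsilon,(m-2u^*)-(F(u^*)/d(u^*))'c^*)<0$ (by the same argument that gives $\lambda_1(\epsilon,m-2\tilde u)<0$ in Lemma \ref{lem04*}), the elliptic operator is invertible, so $\phi=\phi_\xi\,G$ for the fixed solution $G$ of that problem with right-hand side $F(u^*)/d(u^*)$. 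Substitution into the integral constraint reduces injectivity to verifying $\int_\Omega\bigl[(\alpha F/d)'(u^*)c^*G+(\alpha F(u^*)-\theta)/d(u^*)\bigr]\mathrm{d}x\neq 0$, and the hypothesis $k\leq\alpha/\theta$ is exactly what secures a definite sign of the pointwise integrand at the level set $g(u^*)=\theta/\alpha$, completing the argument.
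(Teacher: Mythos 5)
Your proposal follows essentially the same route as the paper's proof: the identical compactness/limit-identification argument for part (i) and for the convergence in part (ii) (including the rescaling $\hat w_i=w_i/\|w_i\|_{L^\infty}$ and the monotonicity of $g$ supplied by $(H_4)$), the same two-step uniqueness of $(u^*,c^*)$ via $(H_5)$ and the monotone dependence of the auxiliary logistic solution on $c$, and the same implicit-function-theorem construction with the decomposition $w=\xi+\zeta$ and the parameter $\gamma=1/\mu$. Only two routine details remain to be written out: in the case $c=0$ you must also exclude the alternative $u_\infty\equiv 0$ (the paper does this by dividing the first equation by $\hat u_i=u_i/\|u_i\|_{L^\infty}$ and integrating, which contradicts $\int_\Omega m\,\mathrm{d}x>0$), and in the non-degeneracy check the integrand does not carry a definite pointwise sign --- rather, the contribution $\int_\Omega\frac{\alpha F(u^*)-\theta}{d(u^*)}\phi_\xi\,\mathrm{d}x$ vanishes by the constraint in \eqref{2022083103}, while the remaining term has a strict sign because $G<0$ (maximum principle with $\lambda_1<0$) and $\alpha+k\alpha u^*-k\theta>0$ precisely under the hypothesis $k\le\alpha/\theta$.
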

\begin{proof}
For assertion (i), suppose by contradiction that system \eqref{KO2} admits a positive solution $(u_i,w_i)$ with $\mu=\mu_i$, where $\mu_i\to+\infty$ as $i\to\infty$. Then $\mu_i$ and $(u_i,w_i)$ satisfy
\begin{equation}\label{2022083001}
\begin{cases}
\epsilon\Delta u_i+u_i(m(x)-u_i)-\frac{F(u_i)}{d(u_i)}w_i=0, &\mbox{in}\;\Omega,\\
\mu_i\Delta w_i+\frac{\alpha F(u_i)-\theta}{d(u_i)}w_i=0, &\mbox{in}\;\Omega,\\
\nabla u_i \cdot \n=\nabla w_i \cdot \n=0,&\mbox{on}\;\partial\Omega.
\end{cases}
\end{equation}
Using Lemma \ref{lem07}, for any $i\geq 1$, one has
$$
\|u_i\|_{L^{\infty}(\Omega)}\leq m_{\max}\;\;
\mbox{and}\;\;
\|w_i\|_{L^{\infty}(\Omega)}\leq c_0.
$$
Similar to the analysis as that in the proof of  Theorem \ref{thm5},  one can deduce from (\ref{2022083001}) that $(u_i,w_i)$, passing to a subsequence if necessary, converges to some nonnegative function $(u_\infty,w_\infty)$ in  $C^1(\Omega)$ as $i\to\infty$,
where $(u_\infty,w_\infty)$ satisfies (in the weak sense)
\begin{equation}\label{2022083002}
\begin{cases}
\epsilon\Delta u_\infty+u_\infty(m(x)-u_\infty)-\frac{F(u_\infty)}{d(u_\infty)}w_\infty=0, &\mbox{in}\;\Omega,\\
\Delta w_\infty=0, &\mbox{in}\;\Omega,\\
\nabla u_\infty \cdot \n=\nabla w_\infty \cdot \n=0,&\mbox{on}\;\partial\Omega.
\end{cases}
\end{equation}
Therefore, there exists some constant $c_0\geq0$ such that $w_\infty=c_0$. We will show that $w_\infty=c_0\geq0$ can't occur.

If $w_\infty=0$, from the first equation of system \eqref{2022083002}, it follows that $u_\infty=0$ or $u_\infty=\tilde{u}$. We first show that $u_\infty=0$ can't occur. If not, assume $u_\infty=0$. Let $\hat{u}_i=\frac{u_i}{\|u_i\|_{L^\infty}}$. Then $\hat{u}_i$ satisfies
\begin{equation}\label{2022083003}
\begin{cases}
\epsilon\Delta \tilde{u}_i+\tilde{u}_i\left(m(x)-u_i-\frac{F(u_i)w_i}{u_id(u_i)}\right)=0, &\mbox{in}\;\Omega,\\
\nabla \tilde{u}_i \cdot \n=0,&\mbox{on}\;\partial\Omega.
\end{cases}
\end{equation}
By assumption $(H_2)$, one has $\lim\limits_{u\to0}\frac{F(u)}{u}=F'(0)$. Similar to the above analysis, one can deduce that $\hat{u}_i\to \hat{u}_\infty\geq0$ in $C^1(\Omega)$ as $i\to\infty$ (passing to a subsequence if necessary) and $\hat{u}_\infty$ satisfies $\|\hat{u}_\infty\|_{L^\infty}=1$.
Multiplying the first equation of \eqref{2022083003} by $\frac{1}{\hat{u}_i}$, integrating the resulting equation on $\Omega$ and letting $i\to\infty$, one gets
$$
\int_\Omega m(x)\mathrm{d}x=-\epsilon \int_\Omega\frac{|\nabla \hat{u}_\infty|^2}{\hat{u}_\infty}\mathrm{d}x\leq0,
$$
which contradicts the assumption $(H_1)$. On the other hand, if $u_\infty=\tilde{u}$, let $\tilde{w}_i=\frac{w_i}{\|w_i\|_{L^\infty}}$. Then $\tilde{w}_i$ satisfies
\begin{equation}\label{2022083004}
\begin{cases}
\mu_i\Delta \tilde{w}_i+\frac{\alpha F(u_i)-\theta}{d(u_i)}\tilde{w}_i=0, &\mbox{in}\;\Omega,\\
\nabla \tilde{w}_i \cdot \n=0,&\mbox{on}\;\partial\Omega.
\end{cases}
\end{equation}
Similarly, one can derive that $\tilde{w}_i\to1$ in $C^1(\Omega)$ as $i\to\infty$ which is equivalent to $\theta>\alpha g(\tilde{u})$. Integrating the first equation of \eqref{2022083004} on $\Omega$ and letting $i\to\infty$, one obtains
$$
\int_\Omega\frac{\alpha F(\tilde{u})-\theta}{d(\tilde{u})}\mathrm{d}x=0,
$$
which contradicts $\int_\Omega\frac{\alpha F(\tilde{u})-\theta}{d(\tilde{u})}\mathrm{d}x<0$. Therefore, $w_\infty=0$ can't occur.

If $w_\infty=C>0$, integrating the second equation of system  \eqref{2022083001} on $\Omega$ and letting $i\to\infty$, one has $\int_\Omega\frac{\alpha F(u_\infty)-\theta}{d(u_\infty)}\mathrm{d}x=0$. This indicates that
\begin{equation}\label{2022083102}
\theta=\alpha g(u_\infty).
\end{equation}
Combining systems \eqref{2022062101} and \eqref{2022083001} alongside the method of upper-lower solutions, we have
$$
u_\infty<\tilde{u}\quad\mbox{on}\;\;\Omega,
$$
which combined with $(H_4)$ gives that $g(u_\infty)<g(\tilde{u}).$   This together with \eqref{2022083102} implies that
$$
\theta<\alpha g(\tilde{u}),
$$
which contradicts our assumption $\theta>\alpha g(\tilde{u})$ and the assertion in statement (i) is proved.

Next we show the results stated in statement (ii). From lemma \ref{lem01} (ii) and Lemma \ref{lem02} (ii), it follows that $(\tilde{u},0)$ is linearly unstable for any $\mu>0$, which combined with Lemma \ref{lem09} suggests that system \eqref{KO2} admits at least one positive solution  for any $\mu>0$. We next establish the following claim.

{\it Claim 1: any positive solution of system \eqref{KO2}, denoted by $(u_\mu,w_\mu)$, converges to $(u^*,c^*)$ in $C^1(\Omega)$ as $\mu\to\infty$, where $c^*$ is a positive constant and  $(u^*,c^*)$ is the unique positive solution of \eqref{2022083103}.}
Similar to the argument as that in proving statement (i), it suffices to show that system \eqref{2022083103} admits a unique positive solution $(u^*,c^*)$ with $c^*$ being a positive constant. To this end, we introduce an auxiliary question
\begin{equation}\label{2022083104}
\begin{cases}
\epsilon\Delta z+z\Big(m(x)-z-\frac{F(z)}{zd(z)}c\Big)=0, &\mbox{in}\;\Omega,\\
\nabla z \cdot \n=0,&\mbox{on}\;\partial\Omega.
\end{cases}
\end{equation}
Since $\lim\limits_{z\to0}\frac{F(z)}{z}=F'(0)$, using the assumption $(H_5)$, it is standard to show (cf. \cite{CantrellCosner}) that for any $c\geq0$, \eqref{2022083104} admits a unique positive solution denoted by $z_c$. By the method of upper-lower solutions, one has that
\begin{equation}\label{2022083105}
\mbox{if $c_1>c_2\geq0$, then $z_{c_1}<z_{c_2}$ on $\bar{\Omega}$}.
\end{equation}
Clearly, we have
\begin{equation}\label{2022083106}
\mbox{$z_c=\tilde{u}$ when $c=0$.}
\end{equation}
Integrating the first equation of \eqref{2022083104} on $\Omega$ for any $c\geq0$, one obtains
$$
\int_\Omega z_c\left(m(x)-z_c-\frac{F(z_c)}{z_cd(z_c)}c\right)\mathrm{d}x=0,
$$
which together with \eqref{2022083105} yields that
\begin{equation}\label{2022083107}
z_c\to0\;\;\mbox{in}\;\;C(\bar{\Omega})\;\;\mbox{as}\;\;c\to+\infty.
\end{equation}
By the assumption $\theta<\alpha g(\tilde{u})$, we have from  \eqref{2022083106} and \eqref{2022083107} that
\begin{equation}\label{2022083108}
\lim\limits_{c\to+\infty}\alpha g(z_c)<\theta<\alpha g(\tilde{u}).
\end{equation}
This combined with the fact that $g(z_c)$ depends continuously and monotonically on $c$ shows that system \eqref{2022083104} admits a unique positive solution $u^*$ with $c=c^*$ such that
$
\theta=\alpha g(u^*).
$
Hence, Claim 1 holds.

Finally, we prove the second part of statement (ii). From now on, we assume $F(u)=u$, and $d(u)=e^{-ku}$ with $k\in[0,\frac{\alpha}{\theta}]$. The other case $d(u)=(1+u)^{-k}$ can be treated similarly.
Define $\mathcal{L}: H_n^{2}(\Omega)\times\mathbb{R}\times \bar{H}_0^{2}(\Omega)\times [0,+\infty)\to L^2(\Omega)\times\mathbb{R}\times \bar{L}^2(\Omega)$  by
\[
\mathcal{L}(u,\xi,\zeta,\beta)=
\begin{pmatrix}
\epsilon\Delta u+u(m(x)-u)-(\xi+\zeta)ue^{ku}
\\[6pt]	
\frac{1}{|\Omega|}\int_\Omega\left[(\alpha u-\theta)(\xi+\zeta)e^{ku}\right]\mathrm{d}x\\
\Delta \zeta+\beta[(\alpha u-\theta)(\xi+\zeta)e^{ku}-\frac{1}{|\Omega|}\int_\Omega (\alpha u-\theta)(\xi+\zeta)e^{ku}\mathrm{d}x]
\end{pmatrix}.
\]
Then, we have
$$
\begin{aligned}
&D_{(u,\xi,\zeta)}\mathcal{L}|_{(u,\xi,\zeta,\beta)=(u^*,c^*,0,0)}(\phi,\psi,\eta)\\
&\quad\quad=\begin{pmatrix}
\epsilon\Delta \phi+\phi(m(x)-2u^*)-c^*(1+ku^*)\phi e^{ku^*}-u^*\psi e^{ku^*}-u^*e^{ku^*}\eta\\[6pt]	
\frac{1}{|\Omega|}\int_\Omega\left[(\alpha+k\alpha u^*-k\theta)c^*\phi+(\alpha u^*-\theta)\psi+(\alpha u^*-\theta)\eta\right]e^{ku^*}\mathrm{d}x\\
\Delta \eta
\end{pmatrix},
\end{aligned}
$$
where $(u^*,c^*)$ is the unique positive solution of \eqref{2022083103}.

{\it Claim 2: $D_{(u,\xi,\zeta)}\mathcal{L}|_{(u,\xi,\zeta,\beta)=(u^*,c^*,0,0)}$ is non-degenerate}.
It suffices to show that problem
\begin{equation}\label{2022112901}
\begin{cases}
\epsilon\Delta \phi+\phi(m(x)-2u^*)-c^*(1+ku^*)\phi e^{ku^*}-u^*\psi e^{ku^*}-u^*e^{ku^*}\eta=0,&\mbox{in}\;\Omega,\\
\frac{1}{|\Omega|}\int_\Omega\left[(\alpha+k\alpha u^*-k\theta)c^*\phi+(\alpha u^*-\theta)\psi+(\alpha u^*-\theta)\eta\right]e^{ku^*}\mathrm{d}x=0\\
\Delta \eta=0,&\mbox{in}\;\Omega,
\end{cases}
\end{equation}
only admits trivial solution in $H_n^2(\Omega)\times\mathbb{R}\times \bar{H}_0^{2}(\Omega)$. The third equation of \eqref{2022112901} and the definition of ${H}_0^{2}(\Omega)$ suggest that $\eta\equiv0$. This along with the fact $\int_\Omega(\alpha u^*-\theta)e^{ku^*}\mathrm{d}x$, and the second equation of \eqref{2022112901} gives that
\begin{equation}\label{2022112902}
\int_\Omega(\alpha+k\alpha u^*-k\theta)\phi e^{ku^*}\mathrm{d}x=0.
\end{equation}
From the first equation of \eqref{2022083103} and the Krein-Rutman Theorem (cf. \cite{du2006order, KreinRutman1948}), one finds that
$$
\lambda_1(\epsilon,m-u^*-c^*e^{ku^*})=0
$$
which together with Lemma \ref{lem01} yields that
$$
\lambda_1(\epsilon,m-2u^*-c^*(1+ku^*)e^{ku^*})<0.
$$
This combined with the first equation of \eqref{2022112901} further implies that $\phi<0$ (resp. $>0$) on $\overline{\Omega}$ if $\psi>0$ (resp. $<0$) on $\overline{\Omega}$ with $\phi\equiv0$ if $\psi\equiv0$ on $\overline{\Omega}$.
This together with \eqref{2022112902} shows that $\phi\equiv\psi\equiv0$. So, Claim 2 holds.

Based on the Claim 2, $\mathcal{L}(u^*,c^*,0,0)=0$, and the implicit function theorem implies that there exists a neighborhood $\mathcal{U}\in H_n^2(\Omega)\times\mathbb{R}\times \bar{H}_0^{2}(\Omega)$ containing $(u^*,c^*,0)$ and a function $(u_\beta,\xi_\beta,\zeta_\beta)$ defined for all $\beta$ close to zero such that if $(u,\xi,\zeta)\in\mathcal{U}$ is a solution  of $\mathcal{L}(u,\xi,\zeta,\beta)=0$ for some $\beta$ close to zero, then we must have that $(u,w)=(u_\beta,\xi_\beta+\zeta_\beta)$ is a positive solution of \eqref{KO2}. This together with Claim 1 shows that \eqref{KO2} admits a unique positive solution when $\mu$  is large, which completes the proof.
\end{proof}

\begin{theorem}\label{thm6}
Suppose that $(H_1)$, $(H_2)$ and $(H_3)$ hold. Fixing all the parameters except $\mu$, assume that $0<\theta<\alpha F(\tilde{u}_{\max})$. Then every positive solution $(u_\mu,w_\mu)$ of system \eqref{KO2} satisfies  that  $u_\mu\to u_0>0$  uniformly on $\bar{\Omega}$ and $w_\mu\to w_0\geq0$ in $L^p(\Omega)$ for any $p\geq1$ as $\mu\to0$, where $u_0\leq F^{-1}(\theta/\alpha)$ on $\bar{\Omega}$ and satisfies (in the weak sense)
\begin{equation}\label{2022092002}
\begin{cases}
\epsilon\Delta u_0+u_0(m(x)-u_0)-\frac{F(u_0)}{d(u_0)}w_0=0, &\mbox{in}\;\Omega,\\
\nabla u_0 \cdot \n=0,&\mbox{on}\;\partial\Omega,
\end{cases}
\end{equation}
and
\begin{equation}\label{2022092101}
\mbox{$w_0(x)=0$ a.e. in $\{x\in\Omega|u_0(x)< F^{-1}(\theta/\alpha)\}$ and $|\{x\in\Omega|w_0>0\}|>0$.}
\end{equation}
Moreover, the following uniqueness results hold.
\begin{itemize}
\item[(a)]  If $m_{\min}\geq F^{-1}(\theta/\alpha)$, then the solution of \eqref{2022092002} is unique and given by
\begin{equation}\label{2022092102}
u_0\equiv F^{-1}(\theta/\alpha)\;\mbox{and}\;
w_0(x)=\frac{\alpha}{\theta}\cdot d\left(F^{-1}(\theta/\alpha)\right)F^{-1}(\theta/\alpha)\left[m(x)- F^{-1}(\theta/\alpha)\right]\;\mbox{a.e. in}\;\Omega.
\end{equation}
\item[(b)] If $m_{\min}<F^{-1}(\theta/\alpha)$ and $\Omega=(0,L)$, we have the following result:
if $m_x\geq0$ (resp. $m_x\leq0$)  in $(0,L)$, then there exists unique $y^*\in(0,L)$, where $y^*$ may be different for the cases $m_x\geq 0$ and $m_x\leq 0$, such that
\begin{equation}\label{2022092107}
     u_0(x)=
     \begin{cases}
     F^{-1}(\theta/\alpha),&\mbox{if}\;x\in[y^*,L]\ (\mathrm{resp}. \ x \in [0, y^*]),\\
   \tilde{u}_{y^*},&\mbox{if}\;x\in[0,y^*)\ (\mathrm{resp}. \ x \in [y^*, L]),
     \end{cases}
          \end{equation}
and
\begin{equation}\label{2022092108}
       w_0(x)=
     \begin{cases}
    \frac{\alpha}{\theta}\cdot d\left(F^{-1}(\theta/\alpha)\right)F^{-1}(\theta/\alpha)\left[m(x)- F^{-1}(\theta/\alpha)\right],&\mbox{a.e. in}\;(y^*,L) \ (\mathrm{resp}.\ in \ [0, y^*]),\\
   0,&\mbox{a.e. in}\;(0,y^*)\ (\mathrm{resp}.\ in \ [y^*, L]),
     \end{cases}
     \end{equation}
     where $\tilde{u}_{y^*}$ is the unique positive solution of
\begin{equation}\label{2022092106}
     \begin{cases}
\epsilon u_{xx}+u(m(x)-u)=0, &\mbox{in}\;(0,y^*),\\
u_x(0)=u_x(y^*)=0 \ (\mathrm{resp}. \ u_x(y^*)=u_x(L)=0),\quad u(y^*)=F^{-1}(\theta/\alpha).
\end{cases}
\end{equation}
\end{itemize}

\end{theorem}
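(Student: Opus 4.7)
The plan is to establish uniform bounds and compactness for $(u_\mu,w_\mu)$, use the singular limit of the second equation to pin down the structure of $u_0$ and $w_0$, and then exploit either a global energy identity or a one-dimensional ODE analysis to prove uniqueness in each case. By Lemma~\ref{lem07}, $u_\mu<m_{\max}$ and $w_\mu\le c_0$ uniformly in $\mu$. The first equation of \eqref{KO2} has a uniformly bounded right-hand side, so $L^p$-elliptic regularity gives $\|u_\mu\|_{W^{2,p}}$ bounded for every $p<\infty$, and along a subsequence $u_\mu\to u_0$ in $C^{1,\alpha}(\bar\Omega)$ and weakly in $W^{2,p}$. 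For $w_\mu$ only an $L^\infty$ bound is available, so I pass first to a weak-$*$ limit $w_\mu\rightharpoonup w_0$ in $L^\infty$; $L^p$ convergence will then follow a posteriori once $u_0>0$ is established, via the explicit expression
\begin{equation*}
w_\mu=\frac{d(u_\mu)[\epsilon\Delta u_\mu+u_\mu(m-u_\mu)]}{F(u_\mu)}
\end{equation*}
from the first equation (with the denominator bounded away from zero). The pair $(u_0,w_0)$ then satisfies \eqref{2022092002} weakly.

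The key singular-limit ingredient is the Krein-Rutman identity $\lambda_1(\mu,(\alpha F(u_\mu)-\theta)/d(u_\mu))=0$, valid since $w_\mu>0$ is a principal eigenfunction of the second equation. Plugging test functions $\phi\in C^\infty$ into the variational formula \eqref{2022061703} and sending $\mu\to 0$ (the $\mu|\nabla\phi|^2$ term drops out) yields $\int(\alpha F(u_0)-\theta)\phi^2/d(u_0)\,dx\le 0$ for every $\phi$; concentrating $\phi^2$ near the maximum forces $\max_{\bar\Omega}(\alpha F(u_0)-\theta)\le 0$, i.e.\ $u_0\le u^*:=F^{-1}(\theta/\alpha)$. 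Integrating the second equation of \eqref{KO2} yields $\int(\alpha F(u_\mu)-\theta)w_\mu/d(u_\mu)\,dx=0$; passing to the limit with $w_0\ge 0$ and the sign constraint forces the integrand to vanish a.e., so $w_0=0$ a.e.\ on $\{u_0<u^*\}$. To rule out $u_0\equiv 0$, note that this would make $(\alpha F(u_\mu)-\theta)/d(u_\mu)$ uniformly negative, contradicting the zero-integral identity since $w_\mu\not\equiv 0$; the strong maximum principle, applied to $\epsilon\Delta u_0+c(x)u_0=0$ with bounded $c$ (using $F(u)/u\to F'(0)$), then gives $u_0>0$ on $\bar\Omega$. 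Finally $w_0\equiv 0$ would force $u_0=\tilde u$, contradicting $u_0\le u^*<\tilde u_{\max}$, so $|\{w_0>0\}|>0$.

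For uniqueness in case (a), with $m_{\min}\ge u^*$, set $\Omega'=\{u_0<u^*\}$; on $\Omega'$, $w_0=0$ so $u_0$ solves $\epsilon\Delta u_0+u_0(m-u_0)=0$ with Neumann data on $\partial\Omega'\cap\partial\Omega$ and $u_0=u^*$ on $\partial\Omega'\cap\Omega$. Testing against $(u^*-u_0)\ge 0$ and integrating by parts (all boundary contributions vanish) gives
\begin{equation*}
\epsilon\int_{\Omega'}|\nabla u_0|^2\,dx+\int_{\Omega'}u_0(m-u_0)(u^*-u_0)\,dx=0.
\end{equation*}
Under $m\ge u^*\ge u_0$ both terms are nonnegative, hence both vanish; the gradient term forces $u_0$ to be constant on each component of $\Omega'$, and by continuity with $u_0=u^*$ on the interior boundary, $\Omega'=\emptyset$. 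Substituting $u_0\equiv u^*$ into \eqref{2022092002} produces the stated formula for $w_0$.

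For case (b), with $\Omega=(0,L)$ and $m_x\ge 0$ (the case $m_x\le 0$ being symmetric), the set $\{u_0=u^*\}$ has positive measure, and since $w_0\ge 0$ there forces $m\ge u^*$, it lies inside $\{m\ge u^*\}=[x_m,L]$, a nontrivial subinterval because $m_{\max}\ge\tilde u_{\max}>u^*$. I would argue $\{u_0=u^*\}=[y^*,L]$ for some $y^*\in(0,L)$ by ruling out interior gaps: at any interior endpoint $a$, $C^1$-regularity provides the Cauchy data $(u^*,0)$ for the ODE $\epsilon u_{xx}+u(m-u)=0$, and combining the one-sided constraint $u_0\le u^*$ with the monotonicity of $m$ forces the trajectory to leave $u^*$ on only one side of $a$, precluding gaps. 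On $[0,y^*]$, $u_0$ then solves the Neumann logistic problem \eqref{2022092106} subject to the compatibility $u_0(y^*)=u^*$; uniqueness of the positive solution of the pure Neumann problem yields $u_0|_{[0,y^*]}=\tilde u_{y^*}$, and the equation for $y^*$ becomes $\tilde u_{y^*}(y^*)=u^*$. Existence and uniqueness of this $y^*$ follow from the strict monotonicity of $y\mapsto\tilde u_y(y)$ (derived by the Hopf-type argument of Lemma~\ref{lem06*}, using $\tilde u_{y,x}>0$) together with the limits $\tilde u_y(y)\to m_{\min}<u^*$ as $y\to 0^+$ and $\tilde u_y(y)\to\tilde u_{\max}>u^*$ as $y\to L^-$; \eqref{2022092107}--\eqref{2022092108} then follow by direct substitution. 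The main obstacle is precisely this connectedness of $\{u_0=u^*\}$: the ODE-based ruling-out of interior gaps is delicate because the behavior of the trajectory emerging from Cauchy data $(u^*,0)$ depends sensitively on $\mathrm{sgn}(m(a)-u^*)$, and the monotonicity of $m$ combined with the one-sided constraint $u_0\le u^*$ is essential to close this argument.
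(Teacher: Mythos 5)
Your treatment of the limiting identification of $(u_0,w_0)$ follows the paper's route: uniform bounds from Lemma~\ref{lem07}, elliptic regularity for $u_\mu$, the Krein--Rutman identity $\lambda_1\big(\mu,\frac{\alpha F(u_\mu)-\theta}{d(u_\mu)}\big)=0$ sent to $\mu\to0$ to get $u_0\le F^{-1}(\theta/\alpha)$, and the integral of the $w_\mu$-equation to get \eqref{2022092101}; your exclusion of $u_0\equiv0$ via the sign of $\int\frac{\alpha F(u_\mu)-\theta}{d(u_\mu)}w_\mu\,\mathrm{d}x$ is a valid (and slightly simpler) variant of the paper's normalization argument. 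For case (a) you take a genuinely different route: instead of the paper's localization to a subdomain where $u_1<F^{-1}(\theta/\alpha)$ followed by the maximum principle, you test the equation against $u^*-u_0\ge0$ and use sign information. This works, but you should test globally on $\Omega$ rather than on $\Omega'=\{u_0<u^*\}$ (whose boundary need not be regular enough for integration by parts): the term $\int_\Omega\frac{F(u_0)}{d(u_0)}w_0(u^*-u_0)\,\mathrm{d}x$ vanishes anyway because $w_0(u^*-u_0)=0$ a.e.\ by \eqref{2022092101}, and the remaining two nonnegative terms force $u_0\equiv u^*$. Also note that neither your argument nor the paper's fully justifies the \emph{strong} $L^p$ convergence of $w_\mu$ (your a posteriori formula reduces it to strong $L^p$ convergence of $\Delta u_\mu$, which the $W^{2,p}$ bound gives only weakly), but this defect is shared with the paper.

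The genuine gap is in case (b), and it is exactly the step you flag as ``delicate'': proving that $\{u_0=u^*\}$ is a single interval abutting $x=L$ (when $m_x\ge0$). Your one-sided Cauchy-data argument at an interior endpoint $a$ of a gap does not close: with data $(u^*,0)$ and $m(a)\ge u^*$ the logistic trajectory is perfectly happy to dip below $u^*$ to the right of $a$, so no contradiction arises from the left endpoint alone. The contradiction must use \emph{both} endpoints of a putative component $(x_4,x_5)$ of $\{u_0<u^*\}$: since $u_0\le u^*$ with equality at $x_4$ and $x_5$ (interior maxima, or the Neumann condition at $x=L$), one gets $u_{0,x}(x_4)=u_{0,x}(x_5)=0$, so $u_0$ restricted to $(x_4,x_5)$ is the unique positive solution of the Neumann logistic problem \eqref{2022092110} there; by the $\eta=u_x/u$ argument of Lemma~\ref{lem06*} that solution is non-decreasing when $m_x\ge0$, which contradicts $u_0<u^*=u_0(x_4)$ inside the interval. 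This two-point Neumann monotonicity lemma, applied to every component of $\{u_0<u^*\}$ except the one touching $x=0$, is the missing ingredient; your trajectory heuristic does not substitute for it. A second, smaller gap: the strict monotonicity of $y\mapsto\tilde u_y(y)$ does not follow merely from $\tilde u_{y,x}>0$; it needs the comparison argument of the paper's Claim~A, namely that $\tilde u_{y_2}$ restricted to $(0,y_1)$ is a strict upper solution of the problem on $(0,y_1)$ because $\tilde u_{y_2,x}(y_1)>0$, whence $\tilde u_{y_2}>\tilde u_{y_1}$ there and $\tilde u_{y_2}(y_2)>\tilde u_{y_2}(y_1)>\tilde u_{y_1}(y_1)$.
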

\begin{proof}
From Lemma \ref{lem04} and Lemma \ref{lem09}, it follows that system \eqref{KO2} admits at least one positive solution denoted by $(u_\mu,w_\mu)$ when $\mu$ is small. By Lemma \ref{lem07}, one obtains that
\begin{equation}\label{2022092001}
\mbox{$(u_\mu,w_\mu)\to (u_0,w_0)$ in $L^p(\Omega)$ for any $p\geq1$,  as $\mu\to0$.}
\end{equation}
Clearly, $u_0\geq0$ and $w_0\geq0$ on $\bar{\Omega}$.
Moreover, applying the elliptic regularity (cf. \cite{GilbargTrudinger2001}) and the Sobolev imbedding theorem, we may assume that $u_\mu\to u_0$ in $C^1(\bar{\Omega})$ as $\mu\to0$ and $ (u_0,w_0)$ satisfies \eqref{2022092002}. Using the strong maximum principle to \eqref{2022092002}, we obtain that $u_0>0$ on $\bar{\Omega}$ or $u_0\equiv0$. If $u_0\equiv0$, similar to the proofs as those for Theorem \ref{thm4}, one can deduce that $\lambda_1(\epsilon,m)=0$, which is impossible due to assumption $(H_1)$. So, $u_0>0$ on $\bar{\Omega}$.
The equation for $w_\mu$ suggests that
$$
\lambda_1\left(\mu,\frac{\alpha F(u_\mu)-\theta}{d(u_\mu)}\right)=0,\quad\mbox{for any small}\;\;\mu>0,
$$
which combined with Lemma \ref{lem01} (ii) implies that
$$
0=\lim\limits_{\mu\to0}\lambda_1\left(\mu,\frac{\alpha F(u_\mu)-\theta}{d(u_\mu)}\right)=\max\limits_{x\in \bar{\Omega}}\frac{\alpha F(u_0)-\theta}{d(u_0)}.
$$
This together with assumptions $(H_2)$ and $(H_3)$ yields that
\begin{equation}\label{2022092003}
u_0\leq F^{-1}(\theta/\alpha) \;\mbox{on} \;\bar{\Omega}\;\mbox{and}\;u_0(x)=F^{-1}(\theta/\alpha) \;\mbox{for some $x\in\bar{\Omega}$}.
\end{equation}
Integrating the equation that $w_\mu$ satisfies on $\Omega$, one has
$$
\int_\Omega \frac{\alpha F(u_\mu)-\theta}{d(u_\mu)}w_\mu\mathrm{d}x=0,\quad\mbox{for any small}\;\;\mu>0.
$$
Sending $\mu\to0$, we have
$
\int_\Omega \frac{\alpha F(u_0)-\theta}{d(u_0)}w_0\mathrm{d}x=0,
$
which shows that
\begin{equation}\label{2022092004}
\mbox{$w_0=0$ a.e. in $\{x\in\Omega|u_0(x)< F^{-1}(\theta/\alpha)\}$.}
\end{equation}
We proceed to prove that $|\{x\in\Omega|w_0>0\}|>0$. If not, assume $w_0=0$ a.e. in $\Omega$. Combining \eqref{2022092002} and $u_0>0$, one has $u_0=\tilde{u}$ on $\bar{\Omega}$. Then, by the assumption $0<\theta<\alpha F(\tilde{u}_{\max})$ and $(H_2)$, we have
$$
\max\limits_{x\in \bar{\Omega}}u_0=\tilde{u}_{\max}>F^{-1}(\theta/\alpha),
$$
which contradicts \eqref{2022092003}. Therefore, $|\{x\in\Omega|w_0>0\}|>0$.

Next we consider the case $m_{\min}\geq F^{-1}(\theta/\alpha)$.
It is easy to verify that $(u_0,w_0)$ given in \eqref{2022092102} is well-defined and it satisfies \eqref{2022092002}, \eqref{2022092101} and $0<u_0\leq F^{-1}(\theta/\alpha)$ on $\bar{\Omega}$. So, it suffices to show that
\eqref{2022092002} admits a unique non-negative solution $(u_0,w_0)$ which satisfies \eqref{2022092101} and $0<u_0\leq F^{-1}(\theta/\alpha)$ on $\bar{\Omega}$. If not, assume that \eqref{2022092002} admits a non-negative solution $(u_1,w_1)\neq (u_0,w_0)$ (see \eqref{2022092102}) satisfying \eqref{2022092101} and $0<u_1\leq F^{-1}(\theta/\alpha)$ on $\bar{\Omega}$.
Then, $u_1\leq,\not\equiv F^{-1}(\theta/\alpha)$ in $\Omega$ and there exists some $x_0\in\Omega$ such that $u_1(x_0)<F^{-1}(\theta/\alpha)$. By continuity, one can find some neighborhood $x_0\in\Omega_1\subset\Omega$ such that
$$
u_1<F^{-1}(\theta/\alpha)\;\mbox{in}\;\Omega_1\quad\mbox{and}\quad u_1(x)=F^{-1}(\theta/\alpha),\;\mbox{for}\;x\in\partial\Omega_1\cap\Omega,
$$
which together with $u_1\leq,\not\equiv F^{-1}(\theta/\alpha)$ in $\Omega$  further implies that
$
\nabla u_1(x)=0,\;\mbox{for}\;x\in\partial\Omega_1\cap\Omega.
$
By \eqref{2022092101}, one has $w_1=0$ a.e. in $\Omega_1$. Therefore, $u_1$ satisfies
$$
\begin{cases}
\epsilon\Delta u+u(m(x)-u)=0, &\mbox{in}\;\Omega_1,\\
\nabla u\cdot \n=0,&\mbox{on}\;\partial\Omega_1.
\end{cases}
$$
However the maximum principle applied to the above equations yields that
$$
m_{\min}\leq\min\limits_{x\in\Omega_1}m(x)\leq\min\limits_{x\in\Omega_1} u_1(x)<F^{-1}(\theta/\alpha),
$$
which contradicts our assumption $m_{\min}\geq F^{-1}(\theta/\alpha)$. These facts complete the proof of the first part.

Finally, we consider the scenario $m_{\min}< F^{-1}(\theta/\alpha)$ and $\Omega=(0,L)$.  We shall only prove the case $m_x\geq 0$ and the case $m_x\leq 0$ can be shown similarly. For any $y\in(0,L]$, we consider an auxiliary problem
\begin{equation}\label{2022092105}
 \begin{cases}
\epsilon u_{xx}+u(m(x)-u)=0, &\mbox{in}\;(0,y),\\
u_x(0)=u_x(y)=0.
\end{cases}
\end{equation}
Without loss of generality,  we assume $m(0)>0$ (if $m(0)<0$, the method is still valid).
It is well-known that \eqref{2022092105} admits a unique positive solution denoted by $\tilde{u}_y$ (see Proposition \ref{prop2}). Moreover, if $m_x\geq,\not\equiv0$ in $(0,y)$, then $\frac{d\tilde{u}_y}{dx}>0$ in $(0,y)$ (see the proof of Lemma \ref{lem06*}); while if $m_x\equiv0$ in $(0,y)$, then $u\equiv m(0)$ in $(0,y)$.

{\bf Claim A}: if $0<y_1<y_2\leq L$, then $\tilde{u}_{y_2}(y_2)\geq\tilde{u}_{y_1}(y_1)$, where ``='' holds if and only if $m_x\equiv0$ in $(0,y_2)$. If $m_x\equiv0$ in $(0,y_2)$, then $\tilde{u}_{y_2}(y_2)=\tilde{u}_{y_1}(y_1)=m(0)$. If $m_x\geq,\not\equiv0$ in $(0,y_2)$, then $\frac{d\tilde{u}_{y_2}}{dx}>0$ in $(0,y_2)$ by Lemma \ref{lem06*}. Thus, $\tilde{u}_{y_2}$ restricted in $(0,y_1)$ is a strictly upper-solution of \eqref{2022092105} with $y=y_1$ due to $\frac{d\tilde{u}_{y_2}(y_1)}{dx}>0=\frac{d\tilde{u}_{y_1}(y_1)}{dx}$. Moreover let $\phi_1^*(\epsilon,m)>0$ be the principal eigenfunction of the following eigenvalue problem
$$
\begin{cases}
\epsilon \phi_{xx}+m\phi=\lambda\phi, &\mbox{in}\;(0,y_1),\\
\phi_x(0)=\phi_x(y_1)=0.
\end{cases}
$$
Then one can choose sufficiently small enough $\sigma>0$ such that $\sigma\phi_1^*(\epsilon,m)<\tilde{u}_{y_2}$ in $(0,y_1)$ and $\sigma\phi_1^*(\epsilon,m)$ is a strictly lower-solution of \eqref{2022092105} with $y=y_1$. Therefore, from the methods of upper-lower solution, it follows that
$$
\tilde{u}_{y_2}(y_1)>\tilde{u}_{y_1}(y_1)
$$
which together with $\frac{d\tilde{u}_{y_2}}{dx}>0$ in $(0,y_2)$ implies that
$$
\tilde{u}_{y_2}(y_2)>\tilde{u}_{y_1}(y_1).
$$
Hence Claim A is proved. On the other hand, one observes that
$$
\lim\limits_{y\to0}\tilde{u}_{y}(y)=m(0)=m_{\min}<F^{-1}\left(\frac{\theta}{\alpha}\right)\quad\mbox{and}\quad\lim\limits_{y\to L}\tilde{u}_{y}(y)=\tilde{u}(L)=\tilde{u}_{\max}>F^{-1}\left(\frac{\theta}{\alpha}\right),
$$
which together with Claim A implies that there exists unique $y^*$ such that $\tilde{u}_{y^*}$ satisfies \eqref{2022092106}. By $\frac{d\tilde{u}_{y^*}}{dx}(y^*)=0$ and $\frac{d\tilde{u}_{y^*}}{dx}(x)>0$ in $(0,y^*)$, one has $\frac{d^2\tilde{u}_{y^*}}{dx^2}(y^*)\leq0$, which substituted into the first equation of \eqref{2022092106} further gives that
$$
m(y^*)\geq \tilde{u}_{y^*}(y^*)=F^{-1}\left(\frac{\theta}{\alpha}\right).
$$
Therefore, $(u_0,w_0)$ defined in \eqref{2022092107} and \eqref{2022092108} satisfies \eqref{2022092002}, \eqref{2022092101}, and $0<u_0\leq F^{-1}\left(\frac{\theta}{\alpha}\right)$. To complete the proof, it suffices to show that \eqref{2022092002} admits a unique non-negative solution $(u_0,w_0)$ which satisfies \eqref{2022092101} and $0<u_0\leq F^{-1}\left(\frac{\theta}{\alpha}\right)$ on $[0,L]$. Assume that \eqref{2022092002} admits another non-negative solution $(u_2,w_2)$ which satisfies \eqref{2022092101} and $0<u_2\leq F^{-1}\left(\frac{\theta}{\alpha}\right)$ on $[0,L]$.

{\bf Claim B}: $u_2=u_0$ on $[0,L]$, where $u_0$ is given in \eqref{2022092107}.
We first prove that  $u_2=u_0$ on $[0,y^*]$. We note here that $u_0=\tilde{u}_{y^*}$ on $[0,y^*]$. It suffices to consider two cases
$$
(1)\;\;u_2(0)<F^{-1}\left(\frac{\theta}{\alpha}\right)\;\;\mbox{and}\;\; (2)\;\;u_2(0)=F^{-1}\left(\frac{\theta}{\alpha}\right).
$$

For case (1), it suffices to consider two cases
$$
\text{(1a)}\;\mbox{$\exists$ $x_1\in(0,L)$ such that $u_2(x_1)=F^{-1}\left(\frac{\theta}{\alpha}\right)$}\;, \ \mbox{and}\;\mathrm{(1b)}\;u_2<F^{-1}\left(\frac{\theta}{\alpha}\right)\;\mbox{in}\;(0,L).
$$
For case (1a), we define $x_2=\inf\limits_{x\in[0,L]}\{u_2(x)=F^{-1}\left(\frac{\theta}{\alpha}\right)\}$. Then, we have
$$
u_2<F^{-1}\left(\frac{\theta}{\alpha}\right)\;\;\mbox{on}\;[0,x_2)\;\;\mbox{and}\;\;u_2(x_2)=F^{-1}\left(\frac{\theta}{\alpha}\right).
$$
By \eqref{2022092101}, we have $w_2=0$ a.e. in $(0,x_2)$ and  $u_2$ satisfies
$$
  \begin{cases}
\epsilon u_{xx}+u(m(x)-u)=0, &\mbox{in}\;(0,x_2),\\
u_x(0)=u_x(x_2)=0,\quad u(x_2)=F^{-1}\left(\frac{\theta}{\alpha}\right).
\end{cases}
$$
Claim A shows that  $x_2=y^*$ and $u_2=\tilde{u}_{y^*}$ on $[0,y^*]$. For case (1b), by \eqref{2022092002} and \eqref{2022092101}, one has that $w_2=0$ a.e. in $(0,L)$ and $u_2=\tilde{u}$, which is impossible due to the fact that $\tilde{u}(L)>F^{-1}\left(\frac{\theta}{\alpha}\right)$.

For case (2), it suffices to consider two cases
$$
\text{(2a)}\;u_2\equiv F^{-1}\left(\frac{\theta}{\alpha}\right)\;\mbox{on}\;[0,L]\;,\ \mbox{and}\;\text{(2b)}\;\mbox{$\exists$ $x_3\in(0,L)$ such that $u_2(x_3)<F^{-1}\left(\frac{\theta}{\alpha}\right)$}.
$$
For case (2a), \eqref{2022092002} tells us that
$w_0(0)<0$ due to the assumption $m(0)=m_{\min}<F^{-1}\left(\frac{\theta}{\alpha}\right)$. This is impossible. For case (2b), we define
$$x_4=\sup\limits_{x\in[0,x_3]}\left\{u_2(x)=F^{-1}\left(\frac{\theta}{\alpha}\right)\right\}$$ and $$x_5=\inf\limits_{x\in[x_3,L]}\left\{u_2(x)=F^{-1}\left(\frac{\theta}{\alpha}\right)\right\}.$$ We note here that $x_4\in(0,x_3)$ and $x_5\in (x_3,L]$. Similarly, one obtains that $w_2=0$ a.e. in $(x_4,x_5)$ and  $u_2$ satisfies
\begin{equation}\label{2022092110}
  \begin{cases}
\epsilon u_{xx}+u(m(x)-u)=0, &\mbox{in}\;(x_4,x_5),\\
u_x(x_4)=u_x(x_5)=0,\quad u(x_4)=F^{-1}\left(\frac{\theta}{\alpha}\right)
\end{cases}
\end{equation}
which admits a unique solution which is non-decreasing with respect to $x$. This contradicts the fact
$u_2<F^{-1}\left(\frac{\theta}{\alpha}\right)\mbox{in}\;(x_4,x_5)$ by the definition of $x_4$ and $x_5$. Hence, $u_2=\tilde{u}_{y^*}=u_0$ on $[0,y^*]$.

Finally, it remains to show that $u_2\equiv F^{-1}\left(\frac{\theta}{\alpha}\right)$ on $[y^*,L]$. Recall that $u_2(y^*)=F^{-1}\left(\frac{\theta}{\alpha}\right)$. Arguing by contradiction, we assume that $\exists$ $x_6\in(0,L)$ such that $u_2(x_6)<F^{-1}\left(\frac{\theta}{\alpha}\right)$.  Define
$$x_7=\sup\limits_{x\in[y^*,x_6]}\left\{u_2(x)=F^{-1}\left(\frac{\theta}{\alpha}\right)\right\}$$ and $$x_8=\inf\limits_{x\in[x_6,L]}\left\{u_2(x)=F^{-1}\left(\frac{\theta}{\alpha}\right)\right\}.$$
Then, similar to the analysis in case (2b), one can deduce a contradiction. Therefore, $u_2\equiv F^{-1}\left(\frac{\theta}{\alpha}\right)$
on $[y^*,L]$. This completes the proof of Claim B and hence the proof of Theorem \ref{thm6}.
\end{proof}

\bigbreak

\noindent \textbf{Acknowledgement}.
The research of D. Tang was supported by the National Natural Science Foundation of China (No. 11901596), Science and Technology Program of Guangzhou (No. 202102020772), and the Fundamental Research Funds for the Central Universities (No. 2021qntd20).
Z.-A. Wang is partially supported by a grant from the NSFC/RGC Joint Research Scheme sponsored by the Research Grants Council of Hong Kong and the National Natural Science Foundation of China (Project No. \mbox{$\mathrm{N}_{-}$PolyU509/22}).


\end{document}